\newtheorem{thm}{Theorem}[section]
\newtheorem{lem}[thm]{Lemma}
\newtheorem{cor}[thm]{Corollary}
\newtheorem{prop}[thm]{Proposition}
\theoremstyle{definition}
\newtheorem{defn}[thm]{Definition}
\newtheorem{obs}[thm]{Observation}
\newtheorem{claim}[thm]{Claim}
\newtheorem{qu}[thm]{Question}
\newcommand{\comment}[1]{}
\begin{document} 

\author[Carl G. Jockusch, Jr.]{Carl G. Jockusch, Jr.}

\address{\tt Department of Mathematics, University of Illinois at
  Urbana-Champaign, 1409 West Green Street, Urbana, IL 61801, USA
  \newline http://www.math.uiuc.edu/\~{}jockusch/} \email{\tt
  jockusch@math.uiuc.edu}

\author[Paul Schupp]{Paul E. Schupp}

\address{\tt Department of Mathematics, University of Illinois at
  Urbana-Champaign, 1409 West Green Street, Urbana, IL 61801, USA}
  \email{\tt schupp@math.uiuc.edu}

\title[Asymptotic density and computability]{Asymptotic density and the theory of  computability : A partial survey}
\subjclass[2010]{03D30, 03D25, 03D28, 03D32, 03D40}

\dedicatory{This paper is dedicated to Rod Downey\\ in honor of his important contributions to computability theory}
\keywords{asymptotic density, generic computability, coarse computability, generic-case complexity}

\maketitle


\section{Introduction}

   The purpose of this paper is to survey  recent work on how classical asymptotic density interacts with the theory of computability.    We have tried to make  the survey  accessible to those who are not specialists in computability theory and we mainly state results without proof, but we include a few easy proofs to illustrate the flavor of the subject.

  In complexity theory, classes such as $\mathcal{P}$ and $\mathcal{NP}$ are defined by using worst-case measures.
That is, a problem belongs to the class if there is an algorithm solving it which has a suitable bound on its running time over \emph{all} instances of the problem.   
Similarly,  in computability theory,  a problem is classified as computable if there is a single algorithm which solves all instances of the given problem. 

   There is now a general 
awareness that worst-case measures may not give a good picture of a particular algorithm or problem
since hard instances may be very sparse.
  The  paradigm case is  Dantzig's  Simplex Algorithm (see \cite{Cor})  for linear programming problems.  
This algorithm  runs many hundreds of times every day for scheduling and transportation
problems, almost always very quickly.
  There are clever examples of Klee and Minty \cite{KM} showing that there exist instances
for which the Simplex Algorithm must  take exponential time,  but such examples are not  encountered in practice.

   Observations of this type led to the
development of {\it  average-case complexity} by Gurevich \cite{Gurevich} and by Levin \cite{Levin} independently.
There are  different approaches to the average-case complexity, but they
all involve computing the expected value of the running time of an
algorithm with respect to some measure on the set of inputs.  Thus the problem must be decidable and one still needs to know 
the worst-case complexity.

  Another  example of hard instances being sparse is the behavior of algorithms for decision problems in group theory
used in computer algebra packages.   There is often  some kind of an easy
``fast check'' algorithm which quickly produces a solution for
``most'' inputs of the problem. This is true even if the
worst-case complexity of the  particular problem is very high or the
problem is even unsolvable. Thus many  group-theoretic
decision problems  have  a very large set of inputs where the
(usually negative) answer can be obtained easily and quickly. 
   
  Such examples led   Kapovich, Myasnikov, Schupp and  Shpilrain \cite{KMSS}  to introduce generic-case complexity
  as a complexity measure which is often more useful and easier to work with than either worst-case or average-case complexity.    In generic-case complexity, one considers algorithms which answer correctly within a given time bound  on a set of inputs of asymptotic density $1$.  They   showed that many 
classical decision problems in group theory resemble the situation of the Simplex Algorithm
in that hard instances are very rare.  For example, consider the word problem for one-relator groups.   In the 1930's Magnus (see \cite{LS})  showed that this problem is decidable but we still have no idea of the possible worst-case complexities over the whole
class of one-relator groups.  However, for \emph{every} one-relator group with at least three generators,
the word problem is generically linear time by Example 4.7 of \cite{KMSS}.  Also, in the famous groups of Novikov \cite{Novikov} and
Boone (see \cite{Rotman}) with undecidable word problem,  the word problem has linear time generic-case complexity by Example 4.6 of \cite{KMSS}.

 Although it focused on complexity, the paper \cite{KMSS}
introduced a general definition of generic computability in Section 9.

   Let $\Sigma$ be a nonempty finite alphabet  and let
$\Sigma^*$ denote the set of all finite words on  $\Sigma$.
The \emph{length}, $|w|$,  of a word  $w$  is the number of
letters in $w$.
Let $S$ be a subset of
$\Sigma^*$. For every $n \geq 0$ let $S \rceil n$ denote  the set of all  words in
$S$  of length  less than or equal to $n$.  In this situation we can copy the classical definition
of asymptotic density from number theory.

\begin{defn}For every $n \geq 0$, the \emph{density of $S$ up to  $n$} is
     \[ \rho_n(S) = \frac{|S  \rceil n|}{|\Sigma^*  \rceil n|} \]

The \emph{density} of $S$ is 

$$  \rho(S) = \lim_{n \to \infty} \rho_n(S)  $$
if this limit exists.
\end{defn}

\begin{defn}   Let $S \subseteq \Sigma^*$.    We say that $S$ is \emph{generic} if $\rho(S) = 1$
and $S$ is \emph{negligible} if $\rho(S) = 0$.
\end{defn}

It is clear that $S$ is generic if and only if its complement
$\overline{S} = \Sigma^* \setminus S$ is negligible. Also, the intersection (union) of finitely
many generic (negligible) sets is generic (negligible).    This notion of genericity should not be confused with notions of genericity from forcing in computability theory and set theory.   The latter are related to  Baire category rather than density.

\begin{defn}(\cite{KMSS}) Let $S$ be a subset of $\Sigma^*$ with characteristic
  function $\chi_S$.   A set $S$ is  \emph{generically   computable}
 if there exists a \emph{partial computable function}  $\varphi$
   such that  $\varphi (x)  = \chi_S (x)$ whenever $\varphi(x)$ is defined
  (written $\varphi(x) \downarrow$) and  the domain of $\varphi$ is
  generic in $\Sigma^*$. 
\end{defn}

 We stress that \emph{all}
  answers given by $\varphi$ must be correct even though $\varphi$ need not
  be everywhere defined, and, indeed, we do not require the domain of
  $\varphi$ to be computable.  In studying complexity we can clock the partial
algorithm and consider it as not answering if it does not answer within
the allotted amount of time.

   To illustrate that even undecidable problems may be generically easy, we consider 
the \emph{Post Correspondence Problem} (PCP).  Fix a
finite alphabet $\Sigma$ of size $k \geq 2$. A typical instance of the
problem consists of a finite sequence of pairs of words $(u_1, v_1),
(u_2, v_2), \dots, (u_n, v_n)$ , where $u_i, v_i \in \Sigma^*$ for $1
\leq i \leq n$.  The problem is to determine whether or not there is a finite
nonempty sequence of indices $i_1, i_2, \dots, i_k$ such that
$$u_{i_1} u_{i_2} \dots u_{i_k} = v_{i_1} v_{i_2} \dots v_{i_k}$$
holds.

In other words, can finitely many  $u$'s be concatenated to
give the same word as the corresponding concatenation of $v$'s? 
Emil Post proved in 1946 \cite{P} that this problem is unsolvable for each alphabet
$\Sigma$ of size at least $2$ and this result   has been used to show that many
other problems are unsolvable.  Our exposition of  a fast generic algorithm for
the PCP  follows  the book \cite{MSU}
  by  Myasnikov,  Shpilrain,
and  Ushakov.

The generic algorithm works as follows.
Say that two words $u$ and $v$ are \emph{comparable} if either is a prefix of the other.
Given an instance $(u_1, v_1), (u_2, v_2), \dots, (u_n, v_n)$ of the
PCP determine whether or not $u_i$ and $v_i$ are comparable for some
$i$ between $1$ and $n$.  If not, output ``no''.  Otherwise, give no
output.

If the given instance has a solution $u_{i_1} \dots u_{i_n} = v_{i_1}
\dots v_{i_n}$, then $u_{i_1}$ and $v_{i_1}$ must be comparable.
Hence the above algorithm never gives a wrong answer.

  We now show that the algorithm  gives an answer with density $1$ on the
natural stratification of instances of the problem.
 Let $I_s$ be the set of instances $(u_1, v_1),
(u_2, v_2), \dots, (u_n, v_n)$ where $n \leq s$ and each word $u_i, v_i$
 has length at most $s$.
Each $I_s$ is finite, each $I_j \subseteq I_{j+1}$  and every instance of the PCP
belongs to some $I_s$.   Let $D_s$ be the set of instances in $I_s$ for which
the algorithm gives an output.

\begin{claim} \quad  $\lim_s \frac{|D_s|}{|I_s|} = 1$
\end{claim}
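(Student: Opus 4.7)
The plan is to show directly that the proportion $|I_s \setminus D_s|/|I_s|$ of instances containing a comparable pair tends to $0$, which is equivalent to the claim since the algorithm fails to output precisely when some $(u_i, v_i)$ is comparable. Writing $k = |\Sigma| \ge 2$ and $N_s = |\Sigma^* \rceil s| = (k^{s+1}-1)/(k-1)$, each instance of $I_s$ with exactly $n$ pairs is an arbitrary element of $(\Sigma^* \rceil s)^{2n}$, so it suffices to obtain a uniform upper bound on the probability that a random pair $(u,v)$ from $(\Sigma^* \rceil s)^2$ is comparable and then apply a union bound over the $n \le s$ pairs.

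First I would estimate the number $C_s$ of comparable pairs $(u,v)$ with $|u|, |v| \le s$. For each fixed $u$ of length $a$, the words $v$ of length at most $s$ that extend $u$ number $1 + k + \cdots + k^{s-a}$, which is at most $2k^{s-a}$ for $k \ge 2$. Summing over all $u$ of length $a$ (of which there are $k^a$) and over $a = 0, 1, \ldots, s$, and doubling to account for pairs where $v$ is a proper prefix of $u$, yields $C_s = O((s+1)k^s)$. Since $N_s^2 = \Theta(k^{2s})$, the density of comparable pairs is $p_s := C_s/N_s^2 = O(s/k^s)$.

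Finally, for each fixed $n$ with $1 \le n \le s$, the instances of length $n$ in $I_s$ number $N_s^{2n}$ and the instances of length $n$ in $D_s$ number $(N_s^2 - C_s)^n = N_s^{2n}(1-p_s)^n$. Hence for each such $n$ the fraction of length-$n$ instances lying in $D_s$ is $(1-p_s)^n \ge 1 - n p_s \ge 1 - s p_s$ by Bernoulli's inequality. Since this lower bound is independent of $n$, summing over $n = 1, \ldots, s$ gives
\[
\frac{|D_s|}{|I_s|} \;=\; \frac{\sum_{n=1}^{s} N_s^{2n}(1-p_s)^n}{\sum_{n=1}^{s} N_s^{2n}} \;\ge\; 1 - s p_s \;=\; 1 - O(s^2 / k^s),
\]
which tends to $1$ because $k \ge 2$. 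The only step requiring genuine calculation is the estimate of $C_s$; after that, the claim reduces to a union bound that is uniform across the strata $n = 1, \ldots, s$, which is why no obstacle arises from instances having varying numbers of pairs.
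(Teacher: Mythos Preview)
Your proof is correct and follows essentially the same approach as the paper: bound the probability that a single pair $(u_i,v_i)$ is comparable by $O(s/k^s)$, then apply a union bound over the at most $s$ pairs to get $O(s^2/k^s)\to 0$. The only difference is presentational: the paper argues directly in terms of the uniform measure on $I_s$, whereas you make the stratification by $n$ explicit and verify that the weighted average over strata still satisfies the same lower bound.
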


\begin{proof} Put the uniform measure on $I_s$ and let an element $(u_1, v_1),
(u_2, v_2), \dots, (u_n, v_n)$ of $I_s$ be chosen uniformly at random.
To prove the claim, we show that the probability that the algorithm
diverges on a random element of $I_s$ approaches $0$ as $s$ approaches
infinity.

For any fixed values of $v_1, u_2, \dots, v_n$ the conditional probability
that $u_1$ is a prefix of $v_1$ is at most $\frac{s+1}{2^s}$ since there at
least  $2^s$ words on $\Sigma$ of length $s$ and at most $s+1$ of these are prefixes
of $v_1$.

Hence, the probability that $u_1$ is a prefix of $v_1$ is at most
$\frac{s+1}{2^s}$, and the probability that some $u_i$ is comparable with
$v_i$ is at most $\frac{2s(s+1)}{2^s}$.  So the probability that the
  algorithm gives no answer on the given instance is at most
  $\frac{2s(s+1)}{2^s}$, which tends to $0$ as $s$ approaches infinity.
\end{proof}

  The  generic algorithm we described works in quadratic time,
so the generic-case complexity of the  Post Correspondence Problem is at most quadratic time.

  From now on we mainly consider subsets of the  the set $\mathbb{N} = \{0,1,\dots\}$ of natural numbers,
which we identify with the set $\omega$ of finite ordinals,   In terms of the preceding definitions,  we are using the $1$-element
alphabet $\Sigma = \{1\}$ and identifying $n \in \omega$ with its
unary representation $1^n \in \{1\}^*$.  In this context, 
we are  using  classical asymptotic density.  
If $A \subseteq \mathbb{N}$, then, for $ n \ge 1$, the \emph{density of $A$ below $n$} is

$$ \rho_n (A) = \frac{ | \{ m \in A : m <  n \} | }{n}   $$
 The \emph{(asymptotic) density} $\rho(A)$ of $A$ is $\lim_{n \to \infty} \rho_n (A)$ if this limit exists.

 While the limit for density does not exist in general, the \emph{upper density}
$$ \overline{\rho}(A)  = \limsup_n \{ \rho_n (A) \} $$

 and the \emph{lower density}

$$ \underline{\rho}(A) = \liminf_n \{ \rho_n (A)\} $$
 always exist.

    We  use $\varphi_e$ for the unary partial function computed by the $e$-th Turing
machine.  Let $W_e$ be the domain of $\varphi_e$.   We identify
a set $A \subseteq \omega$ with its characteristic function $\chi_A$.

  First observe that  \emph{every} Turing degree contains a generically computable set.
Let $A \subseteq \mathbb{N}$.   Let $C(A) = \{ 2^n : n \in A \}$.
Then $C(A)$ is generically computable since the set of powers of $2$
is computable and has density $0$.  All the information about $A$ is in a set of density $0$.
When given $m$, the generic algorithm checks if $m$ is a power of $2$.
If not, the algorithm answers $m \notin C(A)$ and otherwise does not answer.
This example shows that one partial algorithm can generically compute uncountably
many different sets.

  The following sets $R_{k}$ are extremely useful.

\begin{defn}(\cite{JS}, Definition 2.5)
\[ R_k = \{ m :  \ 2^k | m, \ 2^{(k + 1)} \nmid m \}.  \]
\end{defn}

For example, $R_0$ is the set of odd nonnegative integers.  Note that
$\rho(R_k) = 2^{-(k+1)}$.  The collection of sets $\{R_k \}$ forms a
partition of  $\omega - \{0\}$ since these sets are pairwise
disjoint and $ \bigcup_{k = 0}^{\infty} R_k = \omega - \{0\}$.

 From the definition of asymptotic density it is clear that we have
 \emph{finite additivity} for densities.  Of course we do not have 
countable  additivity for densities in general, since
$\omega$ is a countable union of singletons.  However, we do have
countable additivity in the situation where  the 
``tails'' of a sequence contribute vanishingly small density to the
union of a sequence of sets.

\begin{lem}\label{additivity} (\cite{JS}, Lemma 2.6, Restricted countable additivity) \label{rca}
  If $\{ S_i \}, i = 0, 1, \dots $ is a countable collection of pairwise
  disjoint subsets of $\omega$ such that each $\rho(S_i)$ exists
  and $\overline{\rho}( \bigcup_{i = N}^{\infty} S_i ) \to 0$ as $N
  \to \infty$, then

\[   \rho (\bigcup_{i = 0}^{\infty} S_i )  =  \sum_{i=0}^{\infty} \rho(S_i) .  \]

\end{lem}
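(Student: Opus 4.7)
The plan is to combine finite additivity of density for disjoint sets with a squeeze argument on $\underline{\rho}$ and $\overline{\rho}$, letting the tail hypothesis do the work.

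First I would fix $N$ and split $S := \bigcup_{i=0}^{\infty} S_i$ as the disjoint union $S = (\bigcup_{i=0}^{N-1} S_i) \sqcup T_N$, where $T_N := \bigcup_{i=N}^{\infty} S_i$. Since the sets are pairwise disjoint, for every $n$ we have the exact identity $\rho_n(S) = \sum_{i=0}^{N-1} \rho_n(S_i) + \rho_n(T_N)$. Because the first sum has finitely many terms and each $\rho(S_i)$ exists, as $n \to \infty$ that finite piece tends to $\sum_{i=0}^{N-1} \rho(S_i)$. Taking $\liminf$ and $\limsup$ on both sides then yields
\[
\sum_{i=0}^{N-1}\rho(S_i) \;\le\; \underline{\rho}(S) \;\le\; \overline{\rho}(S) \;\le\; \sum_{i=0}^{N-1}\rho(S_i) + \overline{\rho}(T_N),
\]
where the lower bound uses $\rho_n(T_N) \ge 0$ and the upper bound uses the definition of $\overline{\rho}(T_N)$.

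Next I would let $N \to \infty$. The left inequality shows that the partial sums $\sum_{i=0}^{N-1} \rho(S_i)$ form a bounded monotone sequence (bounded by $\underline{\rho}(S) \le 1$), hence the series $\sum_{i=0}^{\infty} \rho(S_i)$ converges; its value is at most $\underline{\rho}(S)$. The hypothesis $\overline{\rho}(T_N) \to 0$ lets me pass to the limit in the right inequality, giving $\overline{\rho}(S) \le \sum_{i=0}^{\infty} \rho(S_i)$. Sandwiching yields $\underline{\rho}(S) = \overline{\rho}(S) = \sum_{i=0}^{\infty} \rho(S_i)$, so in particular $\rho(S)$ exists and equals the infinite sum.

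There is essentially no hard step: finite additivity for $\rho_n$ is immediate from the disjointness, and the only point that requires a moment of care is that one must take $\limsup$ and $\liminf$ of the identity for $\rho_n(S)$ \emph{before} letting $N \to \infty$, since $\limsup$ does not distribute over infinite sums. The tail hypothesis $\overline{\rho}(T_N) \to 0$ is precisely what closes the gap between $\underline{\rho}(S)$ and $\overline{\rho}(S)$, and without it one would only recover the inequality $\sum_{i} \rho(S_i) \le \underline{\rho}(S)$ familiar from the standard countably subadditive behavior of lower density.
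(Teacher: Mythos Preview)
Your argument is correct and is exactly the standard squeeze proof of restricted countable additivity: the identity $\rho_n(S)=\sum_{i<N}\rho_n(S_i)+\rho_n(T_N)$, the passage to $\liminf$/$\limsup$ using that the finite sum has a limit, and then sending $N\to\infty$ with the tail hypothesis are all handled cleanly. The only thing to note is that this survey paper does not actually prove the lemma; it merely states it, citing \cite{JS}, Lemma~2.6, so there is no in-paper proof to compare against. Your write-up would serve perfectly well as the omitted proof.
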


\begin{defn}(\cite{JS}, Definition 2.7) If $A \subseteq \omega$ then $\mathcal{R}(A) =
  \bigcup_{n \in A} R_n$.
\end{defn}

Our sequence $\{R_n\}$ satisfies the hypotheses of Lemma
\ref{rca}, so we have the following corollary.

\begin{cor}\label{rhoadds}(\cite{JS}, Corollary 2.8) 
$\rho(\mathcal{R}(A)) = \sum_{n \in A} 2^{-(n+1)}$.
\end{cor}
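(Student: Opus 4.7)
The plan is to apply Lemma \ref{rca} (restricted countable additivity) directly to the family $\{R_n : n \in A\}$, since the author even remarks right before the corollary that ``Our sequence $\{R_n\}$ satisfies the hypotheses of Lemma \ref{rca}.'' So the real task is to verify those hypotheses for the subfamily indexed by $A$, and the rest is arithmetic.

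First I would handle the trivial case: if $A$ is finite, then $\mathcal{R}(A)$ is a finite union of the $R_n$, each of which has density $2^{-(n+1)}$, and finite additivity of density gives the conclusion. So assume $A$ is infinite and enumerate it in increasing order as $A = \{a_0 < a_1 < a_2 < \dots\}$. Set $S_i = R_{a_i}$; these are pairwise disjoint, and by definition $\rho(S_i) = 2^{-(a_i+1)}$ exists.

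Next I would verify the tail hypothesis $\overline{\rho}\bigl(\bigcup_{i \geq N} S_i\bigr) \to 0$. Here the key observation is that
\[
\bigcup_{i \geq N} S_i \;\subseteq\; \bigcup_{n \geq a_N} R_n \;=\; \{m \geq 1 : 2^{a_N} \mid m\},
\]
because every positive integer belongs to exactly one $R_n$, namely the one whose index is the 2-adic valuation of $m$. The set of positive multiples of $2^{a_N}$ has density exactly $2^{-a_N}$, so
\[
\overline{\rho}\Bigl(\bigcup_{i \geq N} S_i\Bigr) \;\leq\; 2^{-a_N},
\]
and since $a_N \to \infty$ with $N$, the upper density of the tails tends to $0$.

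With the hypotheses of Lemma \ref{rca} verified, applying it yields
\[
\rho(\mathcal{R}(A)) \;=\; \rho\Bigl(\bigcup_{i=0}^\infty S_i\Bigr) \;=\; \sum_{i=0}^\infty \rho(S_i) \;=\; \sum_{i=0}^\infty 2^{-(a_i+1)} \;=\; \sum_{n \in A} 2^{-(n+1)},
\]
as claimed. There is no serious obstacle here; the only minor subtlety is noting that the tail $\bigcup_{n \geq a_N} R_n$ is literally the set of positive multiples of $2^{a_N}$, which gives a clean bound on its density without invoking any further limiting argument.
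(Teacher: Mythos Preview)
Your proof is correct and follows exactly the approach the paper indicates: the paper gives no detailed argument beyond the sentence ``Our sequence $\{R_n\}$ satisfies the hypotheses of Lemma~\ref{rca}, so we have the following corollary,'' and you have simply supplied the verification of those hypotheses for the subfamily $\{R_n : n \in A\}$. The only cosmetic difference is that the paper implicitly treats the full sequence $\{R_n\}_{n \in \omega}$ and lets the reader observe that tails of any subfamily are contained in tails of the full family, whereas you re-index by $A$ explicitly; both routes give the same tail bound $2^{-a_N}$.
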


This gives an explicit construction of sets  with pre-assigned densities.
and shows that every real number $r \in [0,1]$ is a density.

\begin{prop} (\cite{JS}, Observation 2.11)  Every nonzero Turing degree contains a set which is not generically computable since the set  $\mathcal{R}(A)$ is generically computable if and only if $A$ is computable.
\end{prop}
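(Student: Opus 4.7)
The plan is to prove the main equivalence: $\mathcal{R}(A)$ is generically computable if and only if $A$ is computable. Granted this, the proposition follows: if $A$ is any set of nonzero Turing degree, then $A$ and $\mathcal{R}(A)$ are Turing-equivalent (given $n$ we have $n \in A \iff 2^n \in \mathcal{R}(A)$, since $2^n \in R_n$, and conversely, given $m \geq 1$, we can compute the unique $n$ with $m \in R_n$ by finding the largest $k$ dividing $m$ by $2^k$), so $\mathcal{R}(A)$ is noncomputable, and the equivalence then forces $\mathcal{R}(A)$ to fail to be generically computable.

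The easy direction, $A$ computable $\Rightarrow$ $\mathcal{R}(A)$ generically computable, is immediate: if $A$ is computable then $\mathcal{R}(A)$ is computable outright (via the map $m \mapsto n(m)$ just described), and any totally computable set is trivially generically computable.

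For the hard direction, suppose $\varphi = \varphi_e$ is a partial computable function witnessing that $\mathcal{R}(A)$ is generically computable, so $\varphi(m) = \chi_{\mathcal{R}(A)}(m)$ whenever $\varphi(m)\downarrow$ and $\rho(\mathrm{dom}(\varphi)) = 1$. I would compute $\chi_A$ by the following procedure: on input $n$, enumerate the elements of the (computable) set $R_n$ in increasing order, simultaneously running $\varphi$ on each; wait for the first $m \in R_n$ with $\varphi(m)\downarrow$, and output $\varphi(m)$. Correctness is immediate from the partition property of $\{R_k\}$: since $m \in R_n$, we have $m \in \mathcal{R}(A) \iff n \in A$, so $\varphi(m) = \chi_{\mathcal{R}(A)}(m) = \chi_A(n)$.

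The one thing to verify is that the search always terminates, and this is where the key density argument enters. Since $\mathrm{dom}(\varphi)$ is generic, its complement has density $0$, so $R_n \setminus \mathrm{dom}(\varphi) \subseteq \overline{\mathrm{dom}(\varphi)}$ has upper density $0$. But $\rho(R_n) = 2^{-(n+1)} > 0$, so for all sufficiently large $N$ we have $\rho_N(R_n) > \rho_N(R_n \setminus \mathrm{dom}(\varphi))$, which forces $R_n \cap \mathrm{dom}(\varphi)$ to be nonempty (in fact infinite). I expect this density comparison to be the main (and only nontrivial) step, and it is essentially just the observation that a set of positive density cannot be contained in one of density zero.
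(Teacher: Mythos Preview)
Your proof is correct and follows essentially the same approach as the paper: search $R_n$ for a point where the generic algorithm $\varphi$ converges, output that value, and use the fact that $R_n$ has positive density while the complement of $\mathrm{dom}(\varphi)$ has density $0$ to guarantee termination. You have simply supplied more detail (the explicit Turing equivalence and the density comparison) than the paper's terse version.
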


\begin{proof} It is clear that  $\mathcal{R}(A)$ is Turing equivalent to $A$. 
If  $\mathcal{R}(A)$ is generically computable by a partial algorithm $\varphi$,
to compute $A(n)$ search for $k \in R_n$ with $\varphi(k) \downarrow$ and output $\varphi(k)$. 
Since $R_n$ has positive density, this procedure must eventually answer, and the answer is correct because $\varphi$ never gives a wrong answer.
\end{proof}

Recall that a set $A$ is \emph{immune} if $A$ is infinite and $A$ does
not have any infinite c.e.~subset and $A$ is \emph{bi-immune} if both
$A$ and its complement $\overline{A}$ are immune. 
It is clear that  no bi-immune set can be  generically computable.

Now the class of bi-immune sets is both comeager and of measure $1$.
This is clear by  countable additivity since the family of sets
containing a given infinite set is of measure $0$ and nowhere dense.
Thus the family of generically computable sets is both meager and of
measure $0$.

\section{Densities and C.E.\ Sets}

  Observe that a set $A$ is generically computable if and only if there
exist c.e.\ sets $B \subseteq A$ and $C \subseteq \overline{A}$ such that
$B \cup C$ has density $1$.    In particular, every c.e.\ set of density $1$
is generically computable.   This suggests the question of how well
c.e.\ sets can be approximated by computable subsets in general.    The following definition
gives two ways to measure how good an approximation is.

\begin{defn}(\cite{DJS}, Definition 3.1)
Let $A, B \subseteq \omega$.
        
        \begin{itemize}
        
                \item [(i)]  Define $d(A,B) = \underline{\rho} (A \bigtriangleup B)$, the lower density of
 the symmetric difference of $A$ and $B$.

               \item[(ii)]  Define $D(A,B) = \overline{\rho} (A \bigtriangleup B)$, the upper density of
 the symmetric difference of $A$ and $B$.
        \end{itemize}
\end{defn}

   To our knowledge the first  result on approximating c.e.\ sets by computable subsets is  a result of Barzdin' \cite{Barzdin} from 1970 showing that for every c. e. set $A$ and every real number $\epsilon > 0$, there is a computable set $B  \subseteq A$ such that  $d(A,B) < \epsilon$.   We thank Evgeny Gordon for bringing this result to our attention. The following result of Downey, Jockusch, and Schupp improves Barzdin's result from $d$ to $D$.

\begin{thm}(\cite{DJS}, Corollary 3.10) \label{approx}
 For every c.e.\ set
  $A$ and real number $\epsilon > 0$, there is a computable set $B
  \subseteq A$ such that  $D(A,B) < \epsilon$.
\end{thm}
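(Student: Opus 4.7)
The plan is to build $B$ block by block. Partition $\omega$ into disjoint consecutive intervals $I_0,I_1,I_2,\ldots$ whose sizes grow slowly enough that $|I_k|/\max(I_k) \to 0$; a convenient choice is $|I_k|=k+1$, giving $\max(I_k)=\Theta(k^2)$. Fix a computable enumeration $A=\bigcup_s A_s$. I intend to choose a computable sequence of stages $t_0,t_1,t_2,\ldots$ and set
\[ B=\bigcup_k (A_{t_k}\cap I_k). \]
Since the block index of any $x$ is computable from $x$ and each $t_k$ is computable in $k$, membership $x\in B$ reduces to the decidable question $x\in A_{t_k}$, so $B$ is computable and visibly a subset of $A$.

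The reduction of $D(A,B)<\epsilon$ to a per-block condition is the easy step. If $|A\cap I_k\setminus A_{t_k}|\le (\epsilon/2)|I_k|$ for all sufficiently large $k$, then for $n\in I_m$,
\[ |(A\setminus B)\cap [0,n)|\;\le\;\sum_{k<m}(\epsilon/2)|I_k|+|I_m|\;\le\;(\epsilon/2)\,n+|I_m|. \]
Since $|I_m|/n\to 0$ by the choice of block sizes, dividing by $n$ and taking $\limsup$ yields $\overline{\rho}(A\setminus B)\le \epsilon/2<\epsilon$, as required.

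The real work is in producing the $t_k$ computably so that the per-block bound holds \emph{eventually always}, not just infinitely often (the latter would merely reprove Barzdin's theorem on $d$). A first attempt is to let $t_k$ be the least stage $s$ with $|A_s\cap I_k|\ge (1-\epsilon/2)|I_k|$: this forces $|A\cap I_k\setminus A_{t_k}|\le |I_k|-|A_{t_k}\cap I_k|\le (\epsilon/2)|I_k|$, but $t_k$ may fail to exist on blocks where $A$ is sparse. Adding a timeout $t_k=\min(s_k^*,g(k))$ with a computable $g$ restores totality but leaves a residue of blocks where $A$ enumerates a lot into $I_k$ only after stage $g(k)$; on those blocks the bound is violated. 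The essential point of the theorem is to rule these bad blocks out, in weighted density.

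I would handle this by an interleaved construction with priorities: process block $I_k$ using resources allocated only after the earlier blocks have settled, and allow $t_k$ to be reset at most a bounded number of times (at most $\lceil 2/\epsilon\rceil$, since each reset is triggered by the observation of a new $\epsilon|I_k|/2$-sized chunk of $A$ in $I_k$, and $|A\cap I_k|\le |I_k|$). The hard part is keeping $t_k$ computable despite the resets: I would do this by coupling the growth rate of $|I_k|$ to the schedule on which the interleaving processes block $I_k$, so that the set of blocks whose final $t_k$ has not been reached by our effective schedule has total measure $o(n)$ and is thus absorbed into the $|I_m|/n$ term above. In effect, the block structure gives us enough slack to convert Barzdin's $\liminf$ guarantee into the $\limsup$ guarantee we need.
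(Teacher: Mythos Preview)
This survey does not itself prove the theorem; it is stated without proof and attributed to \cite{DJS}, Corollary~3.10, where it is derived from a more technical lemma. So there is no in-paper argument to compare against, and I will assess your proposal on its own terms.

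Your block framework and the reduction in your second paragraph are correct as a \emph{sufficient} condition, but the target you set there --- that $|A\cap I_k\setminus A_{t_k}|\le(\epsilon/2)|I_k|$ hold for \emph{all sufficiently large} $k$ --- is in general unattainable, so this reduction is to a condition that need not hold. A routine construction produces a c.e.\ set $A$ defeating every computable sequence $(t_k)$ in this sense: for each total $\varphi_e$ and each $j$, pick a fresh block index $k$, wait for $\varphi_e(k)$ to converge, and afterward enumerate more than $(\epsilon/2)|I_k|$ new elements of $I_k$ into $A$. For this $A$, no computable $(t_k)$ meets your per-block bound cofinitely often.

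Your final paragraph tacitly retreats to the correct aggregate goal, but it does not supply a construction. The step ``process block $I_k$ using resources allocated only after the earlier blocks have settled'' is circular: a block \emph{settles} when it has no further resets, and that event is not computably detectable --- this is exactly the obstacle you are trying to overcome. Your observation that each block admits at most $\lceil 2/\epsilon\rceil$ resets bounds the \emph{number} of resets, not their \emph{timing}, and it is the timing that governs computability of $t_k$. The closing assertion that the unsettled blocks have total measure $o(n)$ is essentially the whole content of the theorem in this framework; you state it but provide no mechanism that forces it. As written, the proposal locates the difficulty precisely but does not resolve it.
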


  Jockusch and Schupp (\cite{JS}, Theorem 2.22) showed that there is  a c.e.\ set of density $1$ which does  not have any computable subset of density $1$.  It turns out that this  property characterizes an important class of c.e.\ degrees,  where a c.e.\ degree is one which contains a c.e.\ set.  Recall that if ${\bf a}$ is a Turing degree with $A \in {\bf a}$, then
the \emph{jump} of ${\bf a}$, denoted ${\bf a'}$, is the
Turing degree of the halting problem for machines with an oracle for $A$.
 If ${\bf a}$ is a c.e.\ degree then $\mathbf{0' \le \bf a' \le  0''}$.
A  degree  $\bf a$ is \emph{low} if $\mathbf{a'} = \mathbf{0'}$, that is,
${\bf a}'$ is as low as possible.  A degree $\bf a$ is \emph{high} if $\mathbf{a' \geq 0''}$.

Downey, Jockusch, and Schupp \cite{DJS} proved
the following characterization of non-low c.e.\ degrees.  

\begin{thm}(\cite{DJS}, Corollary 4.4)  Let $\mathbf{a}$ be a c.e.\ degree.  Then $ {\bf a}$ is not low if and only if ${\bf a}$ contains a c.e.\ set $A$ of density $1$ with no
computable subset of density $1$.
\end{thm}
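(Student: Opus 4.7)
The plan is to prove the two implications separately, handling the backward direction by its contrapositive.

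For the forward direction, suppose $\mathbf{a}$ is a non-low c.e.\ degree and fix a c.e.\ set $C \in \mathbf{a}$ with $C' >_T \mathbf{0}'$. I will construct a c.e.\ set $A \in \mathbf{a}$ of density $1$ with no computable subset of density $1$ by a priority argument. The construction uses two disjoint density-zero streams in $\omega$: a coding stream on which membership in $A$ encodes $C$ (so that $A \equiv_T C$), and a diagonalization stream on which witnesses $x_e$ are placed into $\overline A$ to meet the requirements $N_e$ stating that $\varphi_e$ is not the characteristic function of a density-$1$ subset of $A$. The non-lowness of $C$ provides the permitting needed to choose, for each $e$, a witness $x_e$ in the diagonalization stream that defeats the potential density-$1$ computable subset coded by $\varphi_e$. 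Since both streams have density zero, $\rho(\overline A) = 0$, hence $\rho(A) = 1$.

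For the backward direction (contrapositive), assume $\mathbf{a}$ is low and $A \in \mathbf{a}$ is c.e.\ of density $1$. By Theorem \ref{approx}, for each $k$ there is a computable $B_k \subseteq A$ with $D(A, B_k) < 2^{-k}$, which implies $\rho(B_k) > 1 - 2^{-k}$. The plan is to use the lowness of $A$ to obtain these $B_k$ uniformly: produce a computable function $k \mapsto e_k$ such that $\varphi_{e_k}$ is the characteristic function of a computable $B_k \subseteq A$ with $D(A, B_k) < 2^{-k}$. Because $A' \equiv_T \mathbf{0}'$, any $A$-oracle decision appearing in the proof of Theorem \ref{approx} can be decided by $\mathbf{0}'$, and lowness (beyond mere $\Delta^0_2$-ness of $A$) lets one arrange these decisions uniformly in $k$. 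Once the $e_k$ are found computably in $k$, I paste the $B_k$ on rapidly growing intervals $[N_k, N_{k+1})$ to obtain a single computable $B \subseteq A$ with $\rho(A \setminus B) = 0$, and therefore $\rho(B) = 1$.

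The hard part will be justifying the uniformity claim in the backward direction: the existence of $B_k$ is pointwise in $k$ by Theorem \ref{approx}, but extracting a computable index function $k \mapsto e_k$ requires inspecting the construction behind Theorem \ref{approx} and showing that lowness of $A$ lets all $A$-oracle queries be replaced by uniformly $\mathbf{0}'$-computable ones that drive a computable uniform procedure. In the forward direction, the technical overhead is standard non-low permitting combined with the disjoint-stream bookkeeping afforded by the $R_k$ partition introduced earlier in the paper.
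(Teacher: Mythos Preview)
The paper is a survey and does not give a proof of this theorem; it merely states it and cites \cite{DJS}.  So there is no proof in the paper to compare against, and I can only assess your proposal on its own terms.

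Your backward direction has the right shape: lowness of $A$ should allow the approximating subsets from Theorem~\ref{approx} to be obtained with enough uniformity to be pasted into a single computable density-$1$ subset.  You correctly flag the uniformity as the crux and note that it requires opening up the construction behind Theorem~\ref{approx}; this is honest, though it is a plan rather than a proof.

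Your forward direction, however, has a genuine gap.  You propose to defeat each $\varphi_e$ by placing a single witness $x_e$ into $\overline{A}$, with $x_e$ drawn from a fixed density-$0$ ``diagonalization stream.''  But a computable set of density $1$ need not meet any given infinite density-$0$ set: if your stream is $S$, then $\overline{S}$ is itself a computable density-$1$ set disjoint from $S$, and you have no witness in $S$ against it.  So the witnesses cannot be confined to a predetermined density-$0$ stream.  More tellingly, your sketch never explains what non-lowness actually buys.  If the construction worked as you describe---coding $C$ on one density-$0$ stream and carrying out a purely computable diagonalization on another---then the non-coding part of $A$ would be computable, hence $A \equiv_T C$ would hold for \emph{every} nonzero c.e.\ $C$, including low ones, contradicting the backward direction.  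The real tension in the forward direction is between (i) keeping $\overline{A}$ of density $0$, (ii) ensuring $A \leq_T C$, and (iii) diagonalizing against \emph{all} computable density-$1$ sets, which are not confined to any thin region.  Non-lowness of $C$ is precisely what is used to resolve this tension, and your sketch does not engage with that mechanism at all; ``non-low permitting'' is not a standard technique you can invoke as a black box here.
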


  With  Eric Astor they also  proved the following result.

\begin{thm}(\cite{DJS}, Corollary 4.2) There is a c.e.\ set $A$ of density $1$ 
such that the degrees of subsets of $A$ of density $1$ 
are exactly the high degrees.
\end{thm}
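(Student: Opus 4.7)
The plan is to use Martin's characterization of high degrees (a Turing degree $\mathbf{a}$ is high iff $\mathbf{a}$ contains a function dominating every total computable function) together with a priority construction of $A$. I partition $\omega$ into computable intervals $I_n$ and assign to each requirement
\[
R_e:\ \bigl(\forall B\subseteq A,\ \rho(B)=1\bigr)\,\bigl(\exists g\leq_T B\bigr)\ g\text{ eventually dominates }\varphi_e
\]
an infinite subsequence of $I_n$'s of fast-growing lengths. On an $R_e$-interval $I$ holding input $m$, the strategy waits for $\varphi_{e,s}(m)\downarrow = v$ and then enumerates into $A$ exactly the elements of $I\cap(\min I + v,\max I]$; if $\varphi_e(m)$ never converges, $I$ contributes nothing to $A$. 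The block sizes are chosen so that the aggregate waste is $o(|I_n|)$, forcing $\rho(A)=1$ regardless of which $\varphi_e$ are total.

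For the first containment, given $B\subseteq A$ of density $1$ and $\varphi_e$ total, every $R_e$-interval $I$ with large enough input $m$ satisfies $\min(A\cap I) > \min I + \varphi_e(m)$, and density $1$ of $B$ forces $B\cap I \neq \emptyset$ for cofinitely many such $I$. Thus $g_e(m) := \min(B\cap I) - \min I$ is $B$-computable on a cofinite domain and eventually dominates $\varphi_e$. A standard diagonal combination of the $g_e$'s yields a single $B$-computable function dominating every total computable function, so $B$ is high by Martin.

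For the second containment, fix an index $e^*$ with $\varphi_{e^*}\equiv 0$, so that $A\cap I = I\setminus\{\min I\}$ on each $R_{e^*}$-interval. Given a high degree $\mathbf{a}$, I pick $X\in\mathbf{a}$ and a dominant $f\in\mathbf{a}$, and define $B$ to agree with $A$ off the $R_{e^*}$-intervals, and on the $R_{e^*}$-interval assigned to input $m$ to equal $A\cap I$ if $m\in X$ and $\emptyset$ if $m\notin X$. Then $B\subseteq A$, $\rho(B) = 1$ (the $R_{e^*}$-intervals have asymptotic density $0$), and $X\leq_T B$ by checking whether $B\cap I$ is empty. To obtain $B\leq_T\mathbf{a}$, one wants $A\leq_T f$ via the rule: decide $A(n)$ for $n$ in the $R_e$-interval with input $m$ by simulating $\varphi_{e,f(m)}(m)$.

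The main obstacle is exactly this last step. Simulating $\varphi_{e,f(m)}(m)$ is correct for all sufficiently large $m$ when $\varphi_e$ is total (since then the convergence-time function is total computable and eventually dominated by $f$), but it may fail on partial $\varphi_e$ with slow-converging instances, and since a general high degree need not compute $\mathbf{0}'$, the aggregate of these per-requirement failures is not a priori $\mathbf{a}$-computable. The delicate technical core of the construction is to arrange the priority allocation so that the "non-dominance" exceptions can be absorbed by an $\mathbf{a}$-computation, exploiting the limit characterization $\mathbf{a}'=\mathbf{0}''$ of highness; I expect this bookkeeping to be the principal source of difficulty in the full proof.
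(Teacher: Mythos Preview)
The paper is a survey and states this result without proof, citing \cite{DJS}; there is therefore no in-paper argument to compare your proposal against, so let me assess the proposal on its own terms.

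Your overall architecture --- Martin's domination criterion plus interval coding --- is the right framework, and you correctly flag the reverse inclusion as delicate. But there is a genuine gap \emph{before} that point: the construction as you describe it cannot achieve $\rho(A)=1$. You stipulate that on an $R_e$-interval $I$ with input $m$, if $\varphi_e(m)\uparrow$ then $I\cap A=\emptyset$. Now take any partial $\varphi_e$ that diverges infinitely often; then infinitely many entire intervals lie in $\overline{A}$. Since you also need each interval $I_n$ to occupy a non-negligible fraction of $[0,\max I_n]$ (otherwise ``$B\cap I\neq\emptyset$ for cofinitely many such $I$'' in your forward argument fails), every such empty interval forces $\rho_{\max I_n}(A)$ bounded away from $1$, so $\underline{\rho}(A)<1$. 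The claim that ``aggregate waste is $o(|I_n|)$'' is incompatible with the waste on $I_n$ itself being all of $|I_n|$. This is not a bookkeeping issue; the two desiderata (intervals large relative to their position, and whole intervals omitted on divergence) directly conflict.

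The standard repair, which also dissolves the obstacle you isolate in the reverse direction, is to abandon per-$e$ interval assignment and instead encode a \emph{single} dominant function $f$ uniformly: arrange (via a movable-marker construction in the style of maximal sets) that $\overline{A}\cap I_n$ is an initial segment of $I_n$ of length $f(n)$, with $f$ dominant and $f(n)=o(|I_n|)$. Density~$1$ is then automatic; any density-$1$ subset $B$ computes the dominant function $n\mapsto\min(B\cap I_n)-\min I_n\ge f(n)$; and for the converse, a dominant $h\in\mathbf{a}$ lets you take $B_0=\{x:x\in A_{h(n)}\cap I_n\}$, which is $h$-computable and has density~$1$ once the marker movement on $I_n$ is bounded by a computable function (hence dominated by $h$). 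The ``partial $\varphi_e$'' pathology simply disappears once intervals are no longer tied to individual $\varphi_e$'s.
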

  
One of the striking things to emerge from considering density and computability is that there is a
very tight  connection between the positions  of  sets in the arithmetical hierarchy
and the complexity of their densities as real numbers.

Fix a computable bijection between the rationals and $\mathbb{N}$,
so we can classify sets of rationals in the arithmetical hierarchy.

\begin{defn}
  Define a  real number $r$ to be  \emph{left}-$\Sigma^0_n$ if its
  corresponding lower cut in the rationals, $\{q \in \mathbb{Q} : q <
  r\}$, is $\Sigma^0_n$.   Define ``left-$\Pi^0_n$'' analogously.
\end{defn}

 Jockusch and Schupp \cite{JS} proved that a real number $r \in [0 ,1]$ is the
density of a computable set if and only if $r$ is a $\Delta^0_2$ real.
  Downey, Jockusch and Schupp \cite{DJS} carried this much further and  proved
the following results.

\begin{thm}(\cite{JS}, Theorem  2.21  ,  \cite{DJS} Corollary 5.4, Theorems 5.6, 5.7, and 5.13)
  Let $r$ be a real number in the interval $[0,1]$ and suppose that $n \geq 1$. Then the
  following hold:
\begin{itemize}
    \item[(i)]  $r$ is the  density of some  set in $\Delta^0_n$  if and only if   $r$ is left-$\Delta^0_{n+1}$. 
     \item[(ii)] $r$ is the lower density of some  set in $\Delta^0_n$  if and only if   $r$ is left-$\Sigma^0_{n+1}$.
    \item[(iii)] $r$ is the upper density of some  set in  $\Delta^0_n$ if and only if   $r$ is left-$\Pi^0_{n+1}$.
    \item[(iv)] $r$ is the lower density of some  set in $\Sigma^0_n$  if and only if   $r$ is left-$\Sigma^0_{n+2}$.
    \item[(v)] $r$ is the upper  density of some  set in $\Sigma^0_n$  if and only if   $r$ is left-$\Pi^0_{n+1}$.
   \item[(vi)]  $r$ is the      density of some  set in $\Sigma^0_n$  if and only if   $r$ is left-$\Pi^0_{n+1}$. 

\end{itemize}
\end{thm}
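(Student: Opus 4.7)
The plan is to split the theorem into twelve implications -- a forward and a reverse direction for each of (i)--(vi) -- and dispatch them in two batches, with a common technique in each batch.

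For the forward directions (complexity upper bounds on the left cut of $r$), the unifying strategy is a quantifier count. For any $S$ one has
\[
q<\underline{\rho}(S)\iff\exists N\,\forall m\ge N\,(\rho_m(S)>q),\qquad q<\overline{\rho}(S)\iff\forall N\,\exists m\ge N\,(\rho_m(S)>q),
\]
and when $\rho(S)$ exists either formula may be used for $q<\rho(S)$. It therefore suffices to classify the matrix $\rho_m(S)>q$. If $S\in\Delta^0_n$ then $m\mapsto\rho_m(S)$ is $\Delta^0_n$-computable, so the matrix is $\Delta^0_n$ and one reads off parts (i)--(iii). If $S\in\Sigma^0_n$ then the statement ``at least $t$ elements of $[0,m)$ lie in $S$'' is $\Sigma^0_n$ (witness enough members), so the matrix is $\Sigma^0_n$ and one reads off (iv)--(vi). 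In particular, the asymmetry that a $\Sigma^0_n$ matrix inside a lone $\forall$ stays at $\Pi^0_{n+1}$ while inside $\exists\forall$ climbs to $\Sigma^0_{n+2}$ is exactly what makes the lower density dearer than the upper density for $\Sigma^0_n$ sets.

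For the reverse directions (given a left-$X$ real $r$, exhibit a set at the prescribed level with the prescribed density), the main engine is a greedy tracking construction combined with the $\mathcal{R}(A)$ encoding of Corollary \ref{rhoadds}. Greedy tracking: given a sequence of rationals $r_m\in[0,1]$, define $S$ by declaring $m\in S$ iff $\rho_m(S)<r_m$. A one-step induction shows that $\rho_m(S)$ stays within $O(1/m)$ of the running target, so the density, lower density, or upper density of $S$ is respectively the limit, liminf, or limsup of $\{r_m\}$; moreover $S$ inherits the arithmetic complexity of $\{r_m\}$. Representing a left-$\Delta^0_{n+1}$, left-$\Sigma^0_{n+1}$, or left-$\Pi^0_{n+1}$ real as a $\Delta^0_n$-computable $\lim$, $\liminf$, or $\limsup$ of rationals (limit-lemma style) and feeding it to the tracker yields the $\Delta^0_n$ witnesses required for (i), (ii), (iii). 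For (iv)--(vi) the desired set need only be $\Sigma^0_n$; one instead builds a $\Sigma^0_n$ set $A\subseteq\omega$ whose ``binary expansion'' approximates $r$ in the appropriate one-sided sense, then takes $\mathcal{R}(A)$ and uses Corollary \ref{rhoadds} together with Lemma \ref{additivity} to compute the resulting density from the geometric weights $2^{-(k+1)}$.

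The main obstacle lies in the $\Sigma^0_n$ reverse directions (iv)--(vi), which must reconcile two conflicting demands: being $\Sigma^0_n$ favors an ``enumeration-only'' construction in which elements are added to $A$ but never removed, whereas hitting a prescribed density is easiest when one can both add and remove. The geometric decay $\rho(R_k)=2^{-(k+1)}$ is what makes the reconciliation possible: decisions about $R_k$ for large $k$ contribute negligibly to density, so the enumeration can commit irrevocably on a few low-index blocks and use the tail $\bigcup_{k\ge N} R_k$ to absorb later corrections under the restricted-additivity umbrella of Lemma \ref{additivity}. Part (iv) is the delicate case: producing a left-$\Sigma^0_{n+2}$ real as the lower density of a $\Sigma^0_n$ set requires scheduling \emph{when} blocks enter as well as \emph{which}, and this second unbounded $\exists$ over stages is precisely what widens $\Sigma^0_{n+1}$ to $\Sigma^0_{n+2}$ and prevents a naive one-quantifier construction from succeeding.
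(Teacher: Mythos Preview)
Your quantifier-counting for the forward directions is correct and is the standard argument, and your greedy-tracking construction for the reverse directions of (i)--(iii) is also correct (modulo the standard fact, which you invoke but do not prove, that the left-$\Sigma^0_{n+1}$ and left-$\Pi^0_{n+1}$ reals are precisely the $\liminf$'s and $\limsup$'s of $\Delta^0_n$-computable rational sequences).  The paper itself merely remarks that the full theorem follows by relativization from the $n=1$ case proved in \cite{JS} and \cite{DJS}, so on those parts your sketch actually supplies more detail than the survey does.

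There is, however, a genuine gap in your reverse directions for (iv)--(vi).  You propose to realize $r$ as the (lower/upper/plain) density of $\mathcal{R}(A)$ for a suitable $\Sigma^0_n$ set $A$.  But by Corollary~\ref{rhoadds} the set $\mathcal{R}(A)$ \emph{always} has a density, equal to $\sum_{k\in A}2^{-(k+1)}$; and when $A$ is $\Sigma^0_n$, enumerating $A$ yields a monotone $\emptyset^{(n-1)}$-computable approximation to this sum, so the density is a left-$\Sigma^0_n$ real.  Thus your construction hits only left-$\Sigma^0_n$ reals, a proper subclass of the left-$\Pi^0_{n+1}$ reals required for (v) and (vi): concretely, with $n=1$ the real $1-\Omega$ is left-$\Pi^0_1$ (hence left-$\Pi^0_2$) but not left-c.e., so it is not $\rho(\mathcal{R}(A))$ for any c.e.\ $A$.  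The reverse implication in (v) is in fact immediate from (iii) since $\Delta^0_n\subseteq\Sigma^0_n$, but (vi) and (iv) need genuinely new constructions of $\Sigma^0_n$ sets whose density is controlled without fixing a block-index set $A$ in advance; the actual arguments in \cite{DJS} enumerate the target set dynamically, adjusting how much of each interval is filled at each stage.  Your final paragraph gestures at such a dynamic enumeration, but once the set is no longer of the form $\mathcal{R}(A)$ neither Corollary~\ref{rhoadds} nor Lemma~\ref{additivity} computes its density for you, and your sketch does not supply the replacement analysis.
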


  This result  follows by relativization from  characterizing  the densities,  upper
densities, and lower densities of the computable and c.e.\ sets.

\subsection{Asymptotic density and the Ershov Hierarchy}

  The correlation of densities and position in the arithmetical hierarchy is further clarified by considering
densities of sets in the Ershov Hierarchy. The Shoenfield Limit Lemma shows that a set $A$ is $\Delta^0_2$ exactly if there is a computable function $g$ such that for all $x$, $A(x) = lim_s g(x,s)$.   Roughly speaking, the Ershov Hierarchy classifies $\Delta^0_2$ sets by the number of $s$ with $g(x,s) \ne  g(x,s+1)$.
A set $A$ is $n$-c.e.\ if  there exists a computable function $g$ as above such that, for all $x$, 
$g(x,0) = 0$ and there are at most $n$ values of $s$ such that $g(x,s) \neq g(x, s+1)$.

  The $1$-c.e.\ sets are just the c.e.\ sets. 
  The $2$-c.e.\ sets, also called  the d.c.e.\ sets, are  sets   which are the differences of two c.e.\ sets.
Since  the densities of  c.e.\ sets are  precisely the left-$\Pi^0_2$ reals in the unit interval, one is led to suspect that the densities 
of the $2$-c.e.\ sets should  be exactly  the differences of two left-$\Pi^0_2$ reals which are in the unit interval.
  This   is true but there is something to prove since the difference  of $A$ and $B$ may have a density even though $A$ and $B$
do not have densities.  Let  $\mathcal{D}_2$ denote the set of reals which are the difference of two left $\Pi^0_2$ reals.
Downey, Jockusch, McNicholl and Schupp \cite{DJMS}  proved the following results.
  
\begin{thm}(\cite{DJMS}, Corollary 4.3) For every $n \ge 2$, the densities of the $n$-c.e.\ sets coincide with the reals in $\mathcal{D}_2 \cap [0,1]$.
\end{thm}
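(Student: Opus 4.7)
I prove both inclusions, reducing each to the $n = 2$ case.

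\textbf{Forward direction.} I show $\rho(A) \in \mathcal{D}_2 \cap [0,1]$ whenever $A$ is $n$-c.e.\ and $\rho(A)$ exists. For $n = 2$, write $A = E \setminus F$ with $E, F$ c.e.; after replacing $F$ by $E \cap F$, one may assume $F \subseteq E$, so $E = A \sqcup F$ and $\rho_m(E) = \rho_m(A) + \rho_m(F)$ for every $m$. Since $\rho_m(A) \to \rho(A)$, taking $\limsup_m$ on both sides gives $\rho(A) = \overline{\rho}(E) - \overline{\rho}(F)$, which by part (v) of the preceding theorem is a difference of two left-$\Pi^0_2$ reals. For general $n \ge 2$ I use the canonical decomposition $A = \bigsqcup_{i}(E_{2i-1} \setminus E_{2i})$ (with a trailing $E_n$ if $n$ is odd), where $E_j$ is the c.e.\ set of those $x$ on which the defining $\Delta^0_2$ approximation of $A$ undergoes at least $j$ mind-changes, and $E_1 \supseteq E_2 \supseteq \cdots$. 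Setting $P_m = \sum_{j \text{ odd}}\rho_m(E_j)$ and $N_m = \sum_{j \text{ even}}\rho_m(E_j)$, one has $\rho_m(A) = P_m - N_m$, and the same subsequence argument yields $\rho(A) = \limsup_m P_m - \limsup_m N_m$. Because each $\rho_m(E_j)$ is left-c.e.\ uniformly in $m$ (as $E_j$ is c.e.), so are $P_m$ and $N_m$; and $\inf_N \sup_{m \ge N}$ of any uniformly left-c.e.\ sequence is plainly left-$\Pi^0_2$, so $\rho(A) \in \mathcal{D}_2$.

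\textbf{Reverse direction.} Given $r \in \mathcal{D}_2 \cap [0,1]$, write $r = \alpha - \beta$ with $\alpha, \beta$ left-$\Pi^0_2$; truncating against $1$ preserves left-$\Pi^0_2$-ness, so I may assume $\alpha, \beta \in [0,1]$, and $r \ge 0$ forces $\alpha \ge \beta$. I aim to build c.e.\ sets $F \subseteq E$ with $\rho(E) = \alpha$ and $\rho(F) = \beta$, so $A = E \setminus F$ is $2$-c.e.\ (hence $n$-c.e.\ for every $n \ge 2$) and has density $\alpha - \beta = r$. To construct $E$ and $F$ together I would adapt the proof of part (vi) of the preceding theorem, which realizes any left-$\Pi^0_2$ density as the density of a single c.e.\ set: run a $\beta$-construction enumerating elements into $F$ (each automatically into $E$) alongside an $\alpha$-construction enumerating the additional mass into $E \setminus F$, maintaining $F_s \subseteq E_s$ at every stage.

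\textbf{Main obstacle.} The principal difficulty is this coupled construction: dovetailing two running lower approximations, preserving $F_s \subseteq E_s$, and certifying both $\rho(E) = \alpha$ and $\rho(F) = \beta$ in the limit. A small but subtle point in the forward direction is justifying the equality $\rho(A) = \limsup_m P_m - \limsup_m N_m$ rather than just a one-sided inequality: convergence of $P_m - N_m$ to $\rho(A)$ forces that along any subsequence realizing the $\limsup$ of $P$ (respectively $N$), the sequence $N$ (respectively $P$) converges to the complementary value, which closes both inequalities.
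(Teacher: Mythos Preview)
The survey paper states this theorem without proof, citing \cite{DJMS}, so there is no ``paper's own proof'' to compare against.  I will therefore assess your argument on its own merits.

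Your forward direction is correct and handles exactly the subtlety the survey flags (``the difference of $A$ and $B$ may have a density even though $A$ and $B$ do not'').  The key identity $\rho(A)=\limsup_m P_m-\limsup_m N_m$ is justified: since $P_m-N_m\to\rho(A)$, any subsequence realizing $\limsup P_m$ forces $N_m$ along it to converge to $\limsup P_m-\rho(A)\le\limsup N_m$, and symmetrically, giving equality.  Your claim that $\limsup_m P_m$ is left-$\Pi^0_2$ is also right: ``$q<P_m$'' is uniformly $\Sigma^0_1$ since each $E_j$ is c.e., so $\{q:(\forall N)(\exists m\ge N)\,q<P_m\}$ is $\Pi^0_2$.

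Your reverse direction, however, has a genuine gap at the very first step.  You write $r=\alpha-\beta$ with $\alpha,\beta$ left-$\Pi^0_2$ and then assert that ``truncating against $1$'' lets you assume $\alpha,\beta\in[0,1]$.  But replacing $\alpha,\beta$ by $\min(\alpha,1),\min(\beta,1)$ does \emph{not} preserve the difference: e.g.\ $\alpha=1.5$, $\beta=1$ gives $r=0.5$, yet both truncate to $1$.  Shifting by an integer does not help either, since one can have $\beta\in[0,1)$ while $\alpha=r+\beta\in(1,2)$.  Obtaining a representation $r=\alpha'-\beta'$ with $0\le\beta'\le\alpha'\le1$ and both $\alpha',\beta'$ left-$\Pi^0_2$ is itself a nontrivial structural fact about $\mathcal{D}_2$ (the analogue for d.c.e.\ reals already requires a real argument), and you have not supplied it.  Beyond this, the coupled construction of c.e.\ sets $F\subseteq E$ with prescribed densities $\beta,\alpha$ is only promised, not carried out; since this is where the actual work of the reverse direction lies, that half of the theorem remains essentially unproven in your proposal.
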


  It follows that there is a real $r$ which is the density of a $2$-c.e.\ set but not of any c.e.\ or co-c.e.\ set.
  
Say that a $\Delta^0_2$ set $A$ is $f$-c.e.\ if there is a computable function $g$ such that, for all $x$, $g(x,0) = 0$, $A(x) = \lim_s g(x, s)$, and $|\{s : g(x,s) \neq g(x,s+1)\}| \leq f(x)$.

\begin{thm}(\cite{DJMS}, Corollary 5.2) Let $f$ be any computable, nondecreasing, unbounded function.   If $A$ is a $\Delta^0_2$ set that has a density, then the density of $A$ is the same as the density of a set $B$ such that  $B$ is $f$-c.e.
\end{thm}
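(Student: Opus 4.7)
Given $A \in \Delta^0_2$ with $\rho(A) = \alpha$ and $f$ computable, nondecreasing, and unbounded, the plan is to construct an $f$-c.e.\ set $B$ with $\rho(B) = \alpha$ by a block construction keyed to $f$, approximating the partial densities of $A$ block by block while respecting the mind-change budget.

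Set $b_k = \mu n\,[f(n) \ge k]$, a computable nondecreasing sequence with $b_k \to \infty$, and partition $\omega = \bigsqcup_k J_k$ with $J_k = [b_k, b_{k+1})$; on $J_k$ every input has mind-change budget at least $k$. Fix any computable approximation $h(x,s)$ of $A$ with $h(x,0) = 0$, and let $A_s = \{x : h(x,s) = 1\}$. For each $k$ choose in advance computable checkpoint stages $s_k^{(1)} < \cdots < s_k^{(k)}$, and set $c_k^{(s)} = |A_{s_k^{(i)}} \cap J_k|$ where $s_k^{(i)}$ is the largest checkpoint $\le s$ (with $c_k^{(s)} = 0$ before the first checkpoint). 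In the approximation of $B$, place the first $c_k^{(s)}$ elements of $J_k$ into $B_s$ according to a fixed evenly-spaced pattern. Since $c_k^{(s)}$ takes at most $k$ distinct values as $s$ grows, each $x \in J_k$ undergoes at most $O(k) \le f(x)$ mind-changes in this approximation, so $B := \lim_s B_s$ is $f$-c.e.

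It remains to verify $\rho(B) = \alpha$. The density hypothesis gives $|A \cap [0, b_{k+1})|/b_{k+1} \to \alpha$, and by construction $|B \cap [0, b_{k+1})| = \sum_{j \le k} c_j$, which tracks $|A \cap [0, b_{k+1})|$ up to the aggregate approximation error $\sum_{j \le k} \bigl||A_{s_j^{(j)}} \cap J_j| - |A \cap J_j|\bigr|$. The evenly-spaced placement within each block controls the intermediate partial densities $\rho_n(B)$ for $n \in J_k$, reducing the verification to showing this aggregate error is $o(b_{k+1})$.

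The main obstacle is choosing the checkpoint stages so that the aggregate error is indeed $o(b_{k+1})$. This forces the proof to combine the $\Delta^0_2$ hypothesis on $A$ (so that $h$ stabilizes on each $J_k$ at some finite but not-necessarily-computable stage $\tau_k$) with the unboundedness of $f$ (so that the final checkpoints $s_k^{(k)}$ may grow arbitrarily fast in $k$). A diagonalization in which $s_k^{(k)}$ is chosen so large that all prior observations of $h$ on $\bigcup_{j<k} J_j$ have stabilized should force $s_k^{(k)} \ge \tau_k$ for cofinitely many $k$, yielding exact matching $c_k = |A \cap J_k|$ on cofinitely many blocks and hence $\rho(B) = \alpha$. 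If the direct diagonalization does not suffice, a fallback is to weaken the matching requirement: it is enough that the cardinality errors $|c_k - |A \cap J_k||$ have average $o(|J_k|)$ along the sequence of blocks, which the density of $A$ and the growing budget $k$ give more room to arrange.
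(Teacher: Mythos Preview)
This survey does not prove the statement; it cites [DJMS, Corollary 5.2] without argument. Given the label ``Corollary'' and the preceding theorem in the survey characterizing the densities of $\Delta^0_n$ sets as the left-$\Delta^0_{n+1}$ reals, the intended route in [DJMS] is almost certainly indirect: one shows that for any computable nondecreasing unbounded $f$ the densities of $f$-c.e.\ sets are exactly the left-$\Delta^0_3$ reals in $[0,1]$, whence the corollary follows at once since these are precisely the densities of $\Delta^0_2$ sets. Your direct construction of $B$ from a given $A$ is therefore a different approach.

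The block framework is natural, but the decisive step---choosing the checkpoint stages---has a genuine gap. You first declare the $s_k^{(i)}$ to be ``chosen in advance'' computably, then propose picking $s_k^{(k)}$ ``so large that all prior observations of $h$ on $\bigcup_{j<k} J_j$ have stabilized''; stabilization of a $\Delta^0_2$ approximation is not a computably detectable event, and in any case stabilization on blocks $J_0,\dots,J_{k-1}$ implies nothing about $\tau_k$. Nor does an arbitrary computable unbounded choice of final checkpoints suffice: for any such choice there is a legitimate approximation $h$ with $h(x,s)=0$ whenever $s$ lies below some slowly growing threshold in $x$, forcing $c_k=0$ for every $k$ while $|A\cap J_k|\approx\alpha|J_k|$. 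The fallback paragraph only restates what must be shown; a budget of $k$ samples on $J_k$ gives no control if all $k$ sampled stages precede the true settling stage. To rescue a direct argument you would need to let later blocks correct errors frozen into earlier ones (using that $|J_k|$ can be made to dominate $b_k$) and to exploit the density hypothesis on $A$ in a way your sketch does not yet do---or else go through the real-number characterization as [DJMS] presumably does.
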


\subsection{Bi-immunity and Absolute Undecidability}

If $A$ is bi-immune then any c.e.\ set contained in either $A$ or
$\overline{A}$ is finite so being bi-immune is an extreme
non-computability condition.  Jockusch \cite{J69a} proved that
there are nonzero Turing
degrees which do not contain any bi-immune sets.  This raises the
natural question of how strong a non-computability condition can be
pushed into every non-zero degree.  Miasnikov and Rybalov \cite{MR}
defined a set $A$ to be \emph{absolutely undecidable} if every partial
computable function which agrees with $A$ on its domain has a domain
of density $0$.  We might suggest the term \emph{densely undecidable}
as a synonym for ``absolutely undecidable'', since being absolutely
undecidable is a weaker condition than being bi-immune.   The following
beautiful and surprising result is due to Bienvenu, Day and H\"olzl
\cite{BDH}.

\begin{thm} (\cite{BDH})
Every nonzero Turing degree contains an absolutely  undecidable set.
\end{thm}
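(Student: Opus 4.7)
The plan is to take any noncomputable $X \in \mathbf{d}$ and construct $A \equiv_T X$ that is absolutely undecidable via a block-coding with built-in redundancy. Fix a computable partition of $\omega$ into consecutive intervals $I_0 < I_1 < \cdots$ whose lengths grow super-exponentially (so that $|I_n|/\sum_{k<n}|I_k| \to \infty$). For each $n$, I would fix computably two binary strings $\pi_n^0, \pi_n^1 \in \{0,1\}^{|I_n|}$ at Hamming distance at least $|I_n|/2$, and set $A \upharpoonright I_n := \pi_n^{X(n)}$. Because the $\pi_n^b$ are uniformly computable in $n$ and $b$, we have $A \le_T X$; and $X(n)$ is recovered from $A \upharpoonright I_n$ by a single comparison with $\pi_n^0$ and $\pi_n^1$, so $A \equiv_T X$.

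For absolute undecidability, I would argue by contradiction. Suppose $\varphi$ is partial computable, agrees with $A$ on $\mathrm{dom}(\varphi)$, and $\overline{\rho}(\mathrm{dom}(\varphi)) \ge \epsilon > 0$. The rapid growth of $|I_n|$ forces $\rho_{\max I_n}(\mathrm{dom}(\varphi)) \approx |\mathrm{dom}(\varphi) \cap I_n|/|I_n|$ for large $n$, so infinitely many blocks satisfy $|\mathrm{dom}(\varphi) \cap I_n| \ge (\epsilon/2)|I_n|$. On any such block, $\varphi$'s values are consistent with exactly one of $\pi_n^0, \pi_n^1$, since these disagree on at least half the positions of $I_n$ and $\varphi$ never errs; hence $\varphi$ reveals $X(n)$ on that block. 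This gives a never-erroneous partial computable procedure that correctly outputs $X(n)$ on an infinite, c.e.\ set of indices $n$.

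The main obstacle is converting this ``infinite correct partial computation of $X$'' into an actual contradiction --- the procedure need not have density-$1$ domain, so it does not immediately make $X$ computable or even generically computable. There are two natural routes to close the gap, either of which would complete the proof: (i) refine the coding so that ``positive upper density in $\mathrm{dom}(\varphi)$'' forces density-$1$ recovery of bits of $X$, by using blocks that encode many bits of $X$ redundantly (an error-correcting style code); or (ii) replace $X$ by a Turing-equivalent set in $\mathbf{d}$ with a stronger non-computability feature --- analogous to bi-immunity --- ruling out infinite c.e.\ partial knowledge. Since bi-immune sets do not exist in every nonzero degree by the cited result of Jockusch, route (ii) is not universally available, so route (i) seems necessary. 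Executing (i) uniformly in every nonzero degree, including hyperimmune-free ones where one lacks convenient effective domination, is the technical heart of the Bienvenu--Day--H\"olzl argument.
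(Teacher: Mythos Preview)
The paper does not prove this theorem; it merely cites \cite{BDH} and adds the single remark that the proof uses the Hadamard error-correcting code, which the authors of \cite{BDH} rediscovered for this purpose. So there is no proof in the paper to compare against beyond that one hint.

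Your outline is on the right track, and your self-diagnosis of its incompleteness is accurate. Coding a single bit $X(n)$ into block $I_n$ yields, from any $\varphi$ with domain of positive upper density, only a partial computable procedure that is correct on an infinite c.e.\ set of arguments of $X$; as you say, this is no contradiction in an arbitrary nonzero degree, precisely because bi-immune sets are not present in every such degree. Your route (i) is exactly what \cite{BDH} carries out, and the Hadamard code is the specific tool: block $I_n$ encodes an entire \emph{initial segment} $X \upharpoonright k_n$ (not a single bit) via the length-$2^{k_n}$ Hadamard code, whose relevant property is that from any fixed $\epsilon$-fraction of error-free positions one can list-decode to at most $1/\epsilon$ candidate messages. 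The bounded candidate lists for longer and longer prefixes of the \emph{same} $X$ can then be combined to compute $X$ outright, giving the contradiction. One minor correction to the part you did write out: the claim that $\varphi$'s values on $I_n$ are ``consistent with exactly one of $\pi_n^0,\pi_n^1$'' requires $\varphi$ to be defined at some position where the two strings differ, which Hamming distance $\geq |I_n|/2$ does not guarantee when $\varphi$ sees only an $(\epsilon/2)$-fraction; this is trivially repaired by taking $\pi_n^0$ and $\pi_n^1$ complementary, and is in any case dominated by the real gap you already identified.
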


  The theorem was proved using the Hadamard error-correcting code, which
the authors of \cite{BDH} rediscovered to prove the result.

\section{Coarse Computability}

    The following definitions suggest another quite reasonable concept of ``imperfect computability''. 

\begin{defn}(\cite{JS}, Definition 2.12) Two sets $A$ and $B$ are \emph{coarsely  similar},
  which we denote by  $A \thicksim_c B$, if their symmetric difference $ A
  \bigtriangleup  B = (A \diagdown B ) \cup ( B \diagdown A)$ has density
  $0$. If $B$ is any set coarsely similar to $A$ then $B$ is called a 
\emph{coarse description} of $A$.
\end{defn}

It is easy to check that $\thicksim_c$ is an equivalence relation.
Any set of density $1$ is coarsely  similar to $\omega$, and any
set of density $0$ is coarsely  similar to $\emptyset$.

\begin{defn}(\cite{JS}, Definition 2.13) A set $A$ is \emph{coarsely computable} if $A$ is
  coarsely  similar to a computable set. That is,  $A$ has
a computable coarse description.
\end{defn}

  We can think of coarse computability in the
following way: The set $A$ is coarsely computable if there exists a
\emph{total} algorithm $\varphi$ which may make mistakes on membership in
$A$ but the mistakes occur only on a negligible set. A generic algorithm is always correct when it answers
and almost always answers, while a coarse algorithm always answers and is almost always 
correct.  Note that  all sets of density $1$ or of density $0$ are
coarsely computable. 

   Using the Golod-Shafarevich inequality, Miasnikov and Osin \cite{MO} constructed finitely generated, computably presented groups
whose word problems are not generically computable.  Whether or not there exist finitely presented groups whose word problem is
not generically computable is a difficult open question.  The situation for coarse computability is very different.

\begin{obs} (\cite{JS}, Observation 2.14).  The word problem of any finitely generated group $G =
  \langle X:R \rangle$ is coarsely computable.
\end{obs}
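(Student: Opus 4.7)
The plan is to split into cases based on whether $G$ is finite or infinite.

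If $G$ is finite, then $G$ has a finite multiplication table, so the word problem $W$ is decidable by directly evaluating a word in $G$. Any computable set is trivially coarsely computable (similar to itself), which handles this case.

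If $G$ is infinite, I would show that $W$ has asymptotic density $0$ in $\Sigma^{*}$, where $\Sigma = X \cup X^{-1}$; the empty set is then a computable coarse description of $W$. To establish $\rho(W) = 0$, identify a word $w \in \Sigma^{n}$ with an $n$-step path from the identity in the Cayley graph of $G$ with generators $\Sigma$, so that the number of length-$n$ words representing the identity is exactly $|\Sigma|^{n} \cdot p_{n}(e,e)$, where $p_{n}(e,e)$ is the $n$-step return probability of the simple random walk. One then invokes the classical fact that for any infinite finitely generated group, $p_{n}(e,e) \to 0$ as $n \to \infty$. One clean derivation: the Markov operator $P$ is self-adjoint on $\ell^{2}(G)$ with $\|P\| \leq 1$ and, since $G$ is infinite, has no $\ell^{2}$-eigenvectors at $\pm 1$, so the spectral measure of $\delta_{e}$ has no atoms at the endpoints of $[-1,1]$ and $p_{2n}(e,e) = \int \lambda^{2n}\, d\mu \to 0$ by dominated convergence; the odd indices are controlled by $p_{2n+1}(e,e) \leq \sqrt{p_{2n}(e,e)\, p_{2n+2}(e,e)}$.

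A routine averaging step converts this pointwise decay into density $0$: since $|W \cap \Sigma^{\leq n}| = \sum_{i=0}^{n} |\Sigma|^{i}\, p_{i}(e,e)$ and $|\Sigma^{\leq n}|$ is of order $|\Sigma|^{n+1}$, splitting the numerator at any threshold $N$ and using $p_{i}(e,e) < \varepsilon$ for $i > N$ yields $\limsup_{n} \rho_{n}(W) \leq \varepsilon$ for every $\varepsilon > 0$, hence $\rho(W) = 0$.

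The main obstacle is the random-walk input $p_{n}(e,e) \to 0$ for infinite finitely generated groups. For non-amenable $G$ it is Kesten's theorem (spectral radius strictly less than $1$, so exponential decay); for amenable infinite $G$ it follows either from transience (summability of $p_{n}$) or, in the recurrent case, which by Varopoulos' classification is confined to finite extensions of $\mathbb{Z}$ and $\mathbb{Z}^{2}$, from classical direct computation of return probabilities. Everything else is bookkeeping.
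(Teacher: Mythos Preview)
Your proof is correct and follows exactly the same two-case split as the paper: finite $G$ has decidable word problem, while for infinite $G$ the word problem has density $0$ (equivalently, its complement has density $1$), so $\emptyset$ is a computable coarse description. The paper simply cites Woess for the density-$0$ fact, whereas you supply a self-contained spectral argument for $p_n(e,e)\to 0$; that argument is sound (no $\ell^2$-eigenvectors at $\pm 1$ via the Dirichlet-form identity, dominated convergence for the even steps, Cauchy--Schwarz for the odd steps), and your Ces\`aro averaging step is the standard way to pass from $p_n\to 0$ to $\rho(W)=0$ when $|\Sigma|\ge 2$. The Kesten/Varopoulos paragraph is redundant given that the spectral argument already covers all infinite $G$, but it is not wrong.
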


\begin{proof} If $G$ is finite then the word problem is computable.
   If $G$ is an infinite
  group, the set of words on $X \cup X^{-1}$ which are not equal
  to the identity in $G$ has density $1$ and hence is coarsely
  computable. (See, for example, \cite{Woess}.)
\end{proof}

  It is easy to check  that the family of coarsely computable sets is
meager and of measure $0$.  In fact, if $A$ is coarsely computable,
then $A$ is neither $1$-generic nor $1$-random.  This is a consequence of
the fact that 
 if $A$ is $1$-random and $C$ is computable, then the
symmetric difference $A \bigtriangleup C$ is also $1$-random, and the
analogous fact also holds for $1$-genericity.  The result now follows
because $1$-random sets have density $1/2$ (\cite{Nies}), 
 and $1$-generic sets have upper density $1$.

\begin{prop}\label{simple}(\cite{JS}, Proposition 2.15) There is a c.e.~set which is coarsely computable but not generically computable.
\end{prop}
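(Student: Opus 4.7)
The plan is to construct a c.e.\ set $A \subseteq R_0$ (the odd numbers) such that $\rho(A) = 0$ and $R_0 \setminus A$ is immune. Coarse computability will then be automatic (since $A$ is coarsely similar to the computable set $\emptyset$), while immunity of $R_0 \setminus A$ will block every partial algorithm from generically computing $A$.

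I would construct $A$ by a simple-set style diagonalization restricted to $R_0$ with a rapid growth threshold. At stage $s$, for each index $e < s$ that has not yet acted, search $W_{e,s}$ for an odd number $x > 2^e$; if one is found, enumerate the least such $x$ into $A$ and declare $e$ to have acted. Each $e$ acts at most once and contributes at most one element which is $> 2^e$, so $|A \cap [0, 2^e)| \leq e$ and hence $\rho(A) = 0$. For immunity, let $W_e$ be any c.e.\ subset of $R_0 \setminus A$. Then every element of $W_e$ is odd; if $W_e$ were infinite it would contain some odd $x > 2^e$, the construction would act for $e$ at some sufficiently late stage and enumerate an element of $W_e$ into $A$, contradicting $W_e \cap A = \emptyset$. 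Thus every c.e.\ subset of $R_0 \setminus A$ is finite.

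For the final verification: $\rho(A \triangle \emptyset) = \rho(A) = 0$, so $A$ is coarsely similar to the computable set $\emptyset$, hence coarsely computable. Suppose, toward contradiction, that $A$ is generically computable via a partial computable $\varphi$, and set $B_i = \{x : \varphi(x)\!\downarrow\, = i\}$ for $i = 0,1$. Both $B_0$ and $B_1$ are c.e., with $B_1 \subseteq A$, $B_0 \subseteq \overline{A}$, and $\rho(B_0 \cup B_1) = 1$ by genericity of the domain of $\varphi$. Since $B_1 \subseteq A$ and $\rho(A) = 0$, we have $\rho(B_1) = 0$, and then finite additivity of density gives $\rho(B_0) = 1$. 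But $B_0 \cap R_0$ is a c.e.\ subset of the immune set $R_0 \setminus A$, so it is finite; consequently $B_0$ is contained, up to a finite set, in $\omega \setminus R_0$, which has density $1/2$. Therefore $\overline{\rho}(B_0) \leq 1/2 < 1$, contradicting $\rho(B_0) = 1$.

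The only genuine content is the simple-set construction itself, which has to simultaneously keep $A$ sparse (so that coarse computability via $\emptyset$ is trivial) and make $R_0 \setminus A$ immune (so that no c.e.\ set can certify enough negative instances of $\chi_A$); the threshold $2^e$ makes both requirements compatible in a single pass, and the remaining verification is just finite additivity of density plus the standard observation that c.e.\ subsets of an immune set are finite.
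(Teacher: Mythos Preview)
Your proof is correct and follows essentially the same approach as the paper: build a density-$0$ c.e.\ set by a Post-style simple-set construction with a growth threshold, so that coarse computability is trivial (via $\emptyset$) and immunity blocks the ``$0$'' side of any would-be generic algorithm. The only difference is that you restrict the construction to $R_0$ and make $R_0 \setminus A$ immune, whereas the paper simply uses threshold $e^2$ on all of $\omega$ to make $A$ simple (i.e., $\overline{A}$ immune); this lets the paper conclude directly that $B_0$ is finite rather than routing through the density-$1/2$ bound on $\omega \setminus R_0$, so your detour through the odd numbers is unnecessary.
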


\begin{proof}
  Recall that a c.e.~set $A$ is \emph{simple} if $\overline{A}$ is
  immune.  It suffices to construct a simple set $A$ of density $0$,
  since any such set is coarsely computable but not generically
  computable.   This is done by a slight
  modification of Post's simple set construction.  Namely, for each
  $e$, enumerate $W_e$ until, if ever, a number $> e^2$ appears, and
  put the first such number into $A$.  Then $A$ is simple, and $A$ has
  density $0$ because for each $e$, it has at most $e$ elements less
  than $e^2$.
\end{proof}

  The following construction shows that c.e.\ sets may be neither
generically nor coarsely computable.

\begin{thm} (\cite{JS}, Theorem 2.16) There exists a c.e.~set which is not coarsely similar
  to any co-c.e.~set and hence is neither coarsely computable nor
  generically computable.
\end{thm}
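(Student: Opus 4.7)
The plan is to build a c.e.\ set $A$ that disagrees with every co-c.e.\ set on a set of positive upper density. To diagonalize against all the $\overline{W_e}$ simultaneously without mutual interference, I will exploit the pairwise disjoint partition $\{R_k\}_{k \ge 0}$ of $\omega \setminus \{0\}$ introduced earlier: each $R_k$ is computable with density $2^{-(k+1)}$, so I can reserve the block $R_e$ for the single requirement that $A$ not be coarsely similar to $\overline{W_e}$, and rely on disjointness to keep the various strategies from colliding.

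Concretely, I would define
\[ A \;=\; \bigcup_{e \geq 0} (R_e \cap W_e). \]
This set is c.e.: an enumerator just enumerates each $W_e$ in parallel and, whenever some $x$ appears in $W_e$, checks the computable condition $x \in R_e$ and enumerates $x$ into $A$ in that case. Now fix any $e$. By disjointness of the partition, $A \cap R_e = W_e \cap R_e$ exactly. A case split on $x \in R_e$ then shows that $A$ and $\overline{W_e}$ disagree everywhere on $R_e$: if $x \in W_e$ then $x \in A$ and $x \notin \overline{W_e}$, while if $x \notin W_e$ then $x \notin A$ and $x \in \overline{W_e}$. So $R_e \subseteq A \bigtriangleup \overline{W_e}$, which yields $\overline{\rho}(A \bigtriangleup \overline{W_e}) \geq \rho(R_e) = 2^{-(e+1)} > 0$, and hence $A$ is not coarsely similar to $\overline{W_e}$.

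Since every co-c.e.\ set has the form $\overline{W_e}$ for some $e$, this shows that $A$ is coarsely similar to no co-c.e.\ set; in particular (every computable set being co-c.e.) $A$ is not coarsely computable. For the generic-case conclusion I would invoke the characterization recorded at the start of Section 2: if $A$ were generically computable, there would be c.e.\ sets $B \subseteq A$ and $C \subseteq \overline{A}$ with $B \cup C$ of density $1$; but then $C \subseteq \overline{A}$ makes $A \bigtriangleup \overline{C} = \overline{C} \cap \overline{A} \subseteq \overline{B \cup C}$, which has density $0$, exhibiting the co-c.e.\ set $\overline{C}$ as a coarse description of $A$ and contradicting what was just proved. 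There is no real obstacle here and no priority argument is needed; the only thing that requires care is selecting a computable, positive-density partition that effectively assigns each index $e$ its own block, and the sets $R_k$ do precisely this.
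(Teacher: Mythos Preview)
Your proposal is correct and follows essentially the same approach as the paper: you build the identical set $A = \bigcup_e (R_e \cap W_e)$, verify $R_e \subseteq A \bigtriangleup \overline{W_e}$ by the same case split, and derive the failure of generic computability via the same reduction to the coarse case using c.e.\ sets $B \subseteq A$, $C \subseteq \overline{A}$ with $B \cup C$ of density $1$. The only cosmetic difference is that you phrase the density bound as $\overline{\rho}(A \bigtriangleup \overline{W_e}) \geq 2^{-(e+1)}$ whereas the paper notes the lower density is positive; either suffices.
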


\begin{proof} Let $\{ W_e \}$ be a standard enumeration of all
  c.e.~sets. Let
  \[ A = \bigcup_{e \in \omega} (W_e \cap R_e ) \] Clearly, $A$ is
  c.e.\ We first claim that $A$ is not coarsely similar to any
  co-c.e.~set and hence is not coarsely computable. Note that
  \[ R_e \subseteq A \bigtriangleup \overline{W_e} \] since if $n \in R_e$
  and $n \in A$ , then $n \in (A \diagdown \overline{W_e})$, while if
  $n \in R_e$ and $n \notin A$, then $n \in ( \overline{W_e} \diagdown
  A)$.  So, for all $e$, $(A \bigtriangleup \overline{W_e})$ has positive
  lower density, and hence $A$ is not coarsely similar to
  $\overline{W_e}$.  It follows that $A$ is not coarsely computable.
  Of course, this construction is simply a diagonal argument, but
  instead of using a single witness for each requirement, we use a set
  of witnesses of positive density.

  Suppose now for a contradiction that $A$ were generically
  computable.   Let $W_a$, $W_b$ be
  c.e.~sets such that $W_a \subseteq A$, $W_b \subseteq \overline{A}$,
  and $W_a \cup W_b$ has density $1$.  Then $A$ would be generically
  similar to $\overline{W_b}$ since
  $$A \bigtriangleup \overline{W_b} \subseteq \overline{W_a \cup W_b}$$
  and $\overline{W_a \cup W_b}$ has density $0$. This shows that $A$ is
  not generically computable.
\end{proof}

   We introduce the following construction which will be used repeatedly.  

\begin{defn}(\cite{JS}) Let  $\mathcal{I}_0 = \{0\}$ and for $n > 0$ let $I_n$ be the interval $[n!, (n+1)!)$.  For $A \subseteq \omega$,
 let $\mathcal{I}(A) =  \cup_{n \in A} I_n$.  
\end{defn}

  \begin{thm} \label{maj}(\cite{JS}, proof of Theorem 2.20)
  For all $A$, the set $\mathcal{I}(A)$ is coarsely computable if and only if $A$ is computable.
  \end{thm}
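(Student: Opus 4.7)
The plan is to prove both directions by pushing the observation that the single interval $I_n$ eventually dominates everything below it: indeed $|I_n| = (n+1)! - n! = n\cdot n!$, while the total number of integers below $(n+1)!$ is $(n+1)! = (n+1)\cdot n!$. Hence $|I_n|/(n+1)! = n/(n+1) \to 1$, and in particular $|I_n| \ge (n+1)!/2$ for all $n \ge 1$. This is the one calculation driving the argument.

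The forward direction is immediate: if $A$ is computable, then to decide whether $m \in \mathcal{I}(A)$ one locates the unique $n$ with $m \in I_n$ (a computable search since the boundaries $n!$ are computable) and asks whether $n \in A$. So $\mathcal{I}(A)$ is computable, hence coarsely computable.

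For the nontrivial direction, suppose $\mathcal{I}(A)$ is coarsely computable, and fix a computable set $C$ with $\rho(\mathcal{I}(A) \bigtriangleup C) = 0$. Define a total computable function $f : \omega \to \{0,1\}$ by setting $f(n) = 1$ iff strictly more than half of the elements of $I_n$ lie in $C$ (this is decidable because $I_n$ is a finite interval with computable endpoints and $C$ is computable). I claim $f(n) = \chi_A(n)$ for all sufficiently large $n$. Indeed, note that $\mathcal{I}(A)$ is \emph{constant} on each $I_n$: if $n \in A$ then $I_n \subseteq \mathcal{I}(A)$, and if $n \notin A$ then $I_n \cap \mathcal{I}(A) = \emptyset$. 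Pick $\epsilon = 1/4$; by $\rho(\mathcal{I}(A) \bigtriangleup C) = 0$ there is $N$ such that for every $m \ge N$ the set $\mathcal{I}(A) \bigtriangleup C$ has fewer than $m/4$ elements below $m$. Applying this to $m = (n+1)!$ and using $|I_n| \ge (n+1)!/2$, we conclude that fewer than $|I_n|/2$ points of $I_n$ are disagreements between $\mathcal{I}(A)$ and $C$. Thus a strict majority of $I_n$ agrees with the constant value of $\mathcal{I}(A)$ on $I_n$, so $f(n) = \chi_A(n)$.

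Hence $f$ and $\chi_A$ differ on at most finitely many inputs, so $A$ is computable (alter $f$ on a finite set). The only mild obstacle is getting the size comparison $|I_n| \ge (n+1)!/2$ right so that disagreements of density less than $1/4$ in $[0,(n+1)!)$ are forced to leave a strict majority of $I_n$ intact; no effective knowledge of the threshold $N$ is needed, since the definition of computability tolerates finite alterations.
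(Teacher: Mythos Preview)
Your proof is correct and follows essentially the same ``majority vote'' strategy as the paper: define the computable set (your $f$, the paper's $D$) by taking the majority value of $C$ on each interval $I_n$, and use the fact that $|I_n|/(n+1)! \to 1$ to show this agrees with $\chi_A$ cofinitely. The only cosmetic difference is that you argue directly with a fixed threshold $\epsilon = 1/4$, whereas the paper assumes $A \bigtriangleup D$ is infinite and derives $\overline{\rho}(C \bigtriangleup \mathcal{I}(A)) \geq 1/2$ for a contradiction.
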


\begin{proof} It is clear that  $\mathcal{I}(A)  \equiv_T A$, so it suffices to show
  that if $A$ is not computable then $\mathcal{I}(A)$ is not coarsely computable.
  If  $\mathcal{I}(A)$ is coarsely computable, we can choose a
  computable set $C$ such that $\rho(C \bigtriangleup \mathcal{I}(A)) = 0$.  The idea is
  now that we can show that $A$ is computable by  using ``majority vote''
  to read  off from $C$
  a set $D$ which differs only finitely from $A$.   Specifically, define
$$D = \{n : |I_n \cap C| > (1/2) |I_n|\}.$$
Then $D$ is a computable set and we claim that $A \bigtriangleup D$ is
finite.   To prove the claim,
assume for a contradiction that $A \bigtriangleup D$ is infinite.  If $n
\in A \bigtriangleup D$, then more than half of the elements of $I_n$ are in
$C \bigtriangleup \mathcal{I}(A)$.  It follows that, for $n \in A \bigtriangleup D$,

$$\rho_{(n+1)!}(C \bigtriangleup \mathcal{I}(A)) \geq \frac{1}{2}\frac{|I_n|}{(n+1)!} = 
\frac{1}{2}\frac{(n+1)! - n!}{(n+1)!} = \frac{1}{2}(1 -
\frac{1}{n+1}).$$ 

  As the above inequality holds for infinitely many
$n$, it follows that $\overline{\rho}(C \bigtriangleup \mathcal{I}(A)) \geq 1/2$, in
contradiction to our assumption that $\rho(C \bigtriangleup \mathcal{I}(A)) = 0$.  It
follows that $A \bigtriangleup D$ is finite and hence $A$ is computable.
\end{proof}

 A similar argument shows that if $A$ is not computable then $\mathcal{I}(A)$ is also
not generically computable. We thus have the following result.

\begin{thm} (\cite{JS}, Theorem 2.20) Every nonzero Turing degree contains a set which is neither
coarsely computable nor generically computable.
\end{thm}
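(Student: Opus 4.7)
The plan is to use $\mathcal{I}(A)$ for a noncomputable $A$ in the given degree. Fix a nonzero degree $\mathbf{a}$ and choose any noncomputable $A \in \mathbf{a}$. Since $\mathcal{I}(A) \equiv_T A$, the set $\mathcal{I}(A)$ lies in $\mathbf{a}$. By Theorem \ref{maj}, $\mathcal{I}(A)$ is not coarsely computable because $A$ is not computable. So only the generic half remains: I must show that if $A$ is not computable, then $\mathcal{I}(A)$ is not generically computable either. This is what the paragraph preceding the statement calls the ``similar argument''.

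The approach is a simplified majority-vote. Suppose, for contradiction, that $\varphi$ is a partial computable function witnessing that $\mathcal{I}(A)$ is generically computable, with domain $D$ of density $1$. The key structural point is that each block $I_n$ is homogeneous for $\mathcal{I}(A)$: it is entirely in $\mathcal{I}(A)$ if $n \in A$ and entirely in $\overline{\mathcal{I}(A)}$ if $n \notin A$. Consequently, any single convergent value $\varphi(k)$ with $k \in I_n$ must equal $A(n)$, since $\varphi$ never errs. The computation of $A(n)$ then consists of dovetailing $\varphi$ on $I_n$ until some $k \in I_n$ yields $\varphi(k){\downarrow}$, and returning that value.

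The one step that needs checking is that this search halts for all but finitely many $n$. Let $F = \{ n : D \cap I_n = \emptyset \}$. If $F$ were infinite, then $\overline{D}$ would contain $I_n$ for infinitely many $n$, giving
\[
\rho_{(n+1)!}(\overline{D}) \;\geq\; \frac{|I_n|}{(n+1)!} \;=\; 1 - \frac{1}{n+1},
\]
so $\overline{\rho}(\overline{D}) = 1$, contradicting $\rho(D) = 1$. Hence $F$ is finite. Hard-coding the values $A(n)$ for $n \in F$ into the algorithm yields a total computable procedure that computes $A$, contradicting noncomputability. I do not expect a significant obstacle: the only place where some care is needed is this density calculation showing $F$ is finite, and it is essentially the same kind of estimate (exploiting $|I_n|/(n+1)! \to 1$) that drove the majority-vote argument in Theorem \ref{maj}.
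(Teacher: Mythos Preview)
Your proposal is correct and follows exactly the route the paper indicates: use $\mathcal{I}(A)$, invoke Theorem~\ref{maj} for coarse computability, and supply the ``similar argument'' for generic computability that the paper leaves to the reader. Your version of that argument is in fact slightly cleaner than a literal majority vote, since the correctness of $\varphi$ lets a single convergent value in $I_n$ determine $A(n)$; the density estimate showing $F$ is finite is precisely the one used in the proof of Theorem~\ref{maj}.
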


  Since $\mathcal{R}(A)$ is generically computable if and only if $A$ is computable,
it seems natural to ask about the coarse computability of $\mathcal{R}(A)$.
 Post's Theorem shows that  the sets Turing reducible to $0'$ are precisely
the sets which are $\Delta^0_2$ in the arithmetical hierarchy.
Using the limit lemma one can prove the following result. 

\begin{thm}(\cite{JS}, Theorem 2.19) \label{coarseR}  
For all $A$, the set $\mathcal{R}(A) = \bigcup_{n \in A} R_n$ is coarsely computable if and only if  $A \leq_T 0'$ .
\end{thm}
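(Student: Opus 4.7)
The plan is to prove both directions using the Shoenfield Limit Lemma, as hinted in the paper. The key quantitative fact is that $R_k$ has density $2^{-(k+1)}$, so (i) the tail $\bigcup_{k \geq N} R_k$ has density $2^{-N}$ (small), and (ii) any set of density $0$ in $\mathbb{N}$ intersects $R_k$ in a set of relative density $0$ \emph{inside} $R_k$.

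For the forward direction ($A \leq_T 0'$ $\Rightarrow$ $\mathcal{R}(A)$ is coarsely computable), I apply Shoenfield to obtain a computable $g(n,s)$ with $A(n) = \lim_s g(n,s)$. Given input $m \geq 1$, I compute the unique $k = k(m)$ with $m \in R_{k(m)}$ (just the $2$-adic valuation of $m$) and define the computable set $C$ by $C(m) = g(k(m), m)$. To estimate $\rho(C \bigtriangleup \mathcal{R}(A))$, for each $k$ let $s_k$ be the least stage past which $g(k, \cdot)$ has stabilized to $A(k)$; then every disagreement of $C$ with $\mathcal{R}(A)$ on $R_k$ lies below $s_k$. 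Fix $\varepsilon > 0$ and choose $N$ with $2^{-N} < \varepsilon$. Then
\[
C \bigtriangleup \mathcal{R}(A) \;\subseteq\; \Bigl(\bigcup_{k < N} R_k \cap [0, s_k)\Bigr) \;\cup\; \bigcup_{k \geq N} R_k,
\]
whose upper density is at most $0 + 2^{-N} < \varepsilon$, so the density of the symmetric difference is $0$.

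For the backward direction ($\mathcal{R}(A)$ coarsely computable $\Rightarrow$ $A \leq_T 0'$), let $C$ be a computable set with $\rho(C \bigtriangleup \mathcal{R}(A)) = 0$. The key computation is that since $|R_k \cap [0,n)| = (1 + o(1))\, n \, 2^{-(k+1)}$ and the numerator $|(C \bigtriangleup \mathcal{R}(A)) \cap R_k \cap [0,n)|$ is $o(n)$, for each fixed $k$ the ratio
\[
r_k(n) \;=\; \frac{|C \cap R_k \cap [0,n)|}{|R_k \cap [0,n)|}
\]
tends to $A(k) \in \{0,1\}$ as $n \to \infty$ (equals $1$ if $R_k \subseteq \mathcal{R}(A)$ and $0$ if $R_k \cap \mathcal{R}(A) = \emptyset$). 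Define the computable function $g(k,n) = 1$ if $r_k(n) > 1/2$ and $0$ otherwise (with $g(k,n) = 0$ when $R_k \cap [0,n)$ is empty). Then $\lim_n g(k,n) = A(k)$, so by Shoenfield's Limit Lemma $A \leq_T 0'$.

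I expect no serious obstacle: both directions are short once one notices the relativization of density to the positive-density sets $R_k$. The only mild subtlety is checking in the backward direction that a set of density $0$ in $\mathbb{N}$ automatically has relative density $0$ inside each $R_k$ (because $R_k$ itself has positive density), which is exactly what lets the majority-vote trick succeed here even though $R_k$ is infinite; this is analogous in spirit to the majority-vote argument used for $\mathcal{I}(A)$ in Theorem \ref{maj}, but simpler because we need only a $\Delta^0_2$ conclusion rather than a computable one.
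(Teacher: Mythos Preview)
Your proof is correct and follows exactly the approach the paper indicates (``Using the limit lemma one can prove the following result''): in the forward direction you turn a computable approximation $g(k,s)$ of $A$ into the computable coarse description $C(m)=g(k(m),m)$, and in the backward direction you recover a computable approximation of $A$ from a computable coarse description $C$ via the majority-vote function $g(k,n)=[\,r_k(n)>1/2\,]$ and then invoke the Limit Lemma. The only detail to note is the harmless handling of $m=0$, which you already acknowledge by restricting to $m\ge 1$.
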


In particular, if $A$ is any noncomputable set Turing reducible to $0'$ 
then $\mathcal{R}(A)$ is coarsely computable but not generically computable.

\section{Computability at densities less than 1}
 
  Generic and coarse computability are computabilites at density $1$. Downey, Jockusch and Schupp \cite{DJS} took the
natural step of considering computability at densities less than $1$.

\begin{defn}(\cite{DJS}, Definition 5.9)  If $ r \in [0,1]$,  a  set $A$ is \emph{partially computable at density } $r$ if
there exists a partial computable function $\varphi$ agreeing with $A(n)$ whenever $\varphi(n)\downarrow$ and with the lower density of domain($\varphi$)  greater than or equal to $ r$.
\end{defn}
  
 A natural first question is:  Are there sets which are computable at all densities $r < 1$
but are not generically computable?
  Actually, we have already seen that every nonzero Turing degree contains such sets.    Any set of the form  $\mathcal{R}(A)$ is partially computable at all densities less than $1$, as Asher Kach observed.     Note that for  any $t \ge 0$,
the set $  \bigcup{R_k }$  where $ k \le t$ and  $k \in A$  is a computable set whose symmetric difference with $\mathcal{R}(A)$ is contained
in $\bigcup\{R_k : k > t\}$, and the latter set has density $2^{-t-1}$.  Furthermore, $\mathcal{R}(A)$  is generically computable if and only if $A$ is computable.

  This   ``approachability'' phenomenon holds very generally.
  
\begin{defn}(\cite{DJS}, Definition 6.9) If  $A \subseteq \omega$, the \emph{partial  computability bound} of $A$
is 
\newline $ \alpha(A) := sup\{r : A \mbox{ is computable at density } r \}$ .
\end{defn}

\begin{thm}(\cite{DJS}, Theorem 6.10)  If $r \in [0,1]$, 
then there is a set $A$ of density $r$ with $\alpha(A) = r$. 
\end{thm}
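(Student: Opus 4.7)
The plan is to build $A$ as the union of a density-$r$ set of the form $\mathcal{R}(X)$ together with a simple set $S$ of density $0$ whose complement is immune. The $\mathcal{R}(X)$-part supplies the density and a natural sequence of computable subsets of $A$ whose densities approach $r$ from below, yielding $\alpha(A) \geq r$; the simplicity of $S$ forces $\overline{A}$ to contain no infinite c.e.\ set, yielding $\alpha(A) \leq r$.

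First, choose $X \subseteq \omega$ so that $\sum_{k \in X} 2^{-(k+1)} = r$ (the binary expansion of $r$), and let $\mathcal{R}(X) = \bigcup_{k \in X} R_k$, so by Corollary~\ref{rhoadds} we have $\rho(\mathcal{R}(X)) = r$. Next, let $S$ be the Post-style simple set obtained by enumerating into $S$, for each $e$, the least $x > e^{2}$ that appears in $W_e$ (if such an $x$ ever appears). Then $|S \cap [0, n)| \leq \lfloor \sqrt{n} \rfloor$, so $\rho(S) = 0$; and any infinite c.e.\ set $W_e$ eventually enumerates some $x > e^{2}$, contributing to $S$, so $\overline{S}$ is immune. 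Now set $A = \mathcal{R}(X) \cup S$.

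For the density: $\underline{\rho}(A) \geq \rho(\mathcal{R}(X)) = r$ by monotonicity, while $\overline{\rho}(A) \leq \overline{\rho}(\mathcal{R}(X)) + \overline{\rho}(S) = r + 0 = r$ by subadditivity, so $\rho(A) = r$. For $\alpha(A) \leq r$: any correct partial computable $\varphi$ has $W_0 := \varphi^{-1}(0) \subseteq \overline{A} \subseteq \overline{S}$ a c.e.\ set, which must be finite by immunity of $\overline{S}$; thus, with $W_1 := \varphi^{-1}(1) \subseteq A$, the finiteness of $W_0$ gives $\underline{\rho}(\mathrm{dom}(\varphi)) = \underline{\rho}(W_1) \leq \rho(A) = r$. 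For $\alpha(A) \geq r$: enumerate $X$ in increasing order as $k_1 < k_2 < \cdots$ and let $C_n = R_{k_1} \cup \cdots \cup R_{k_n}$, a computable subset of $A$ with $\rho(C_n) = \sum_{i \leq n} 2^{-(k_i + 1)}$. The partial computable function equal to $1$ on $C_n$ and undefined elsewhere is correct on $A$, with domain of density $\rho(C_n) \uparrow r$, so $\alpha(A) \geq \rho(C_n)$ for each $n$ and hence $\alpha(A) \geq r$.

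The subtle point is the upper bound $\alpha(A) \leq r$, since $\mathcal{R}(X)$ may itself be highly non-computable (indeed not $\Delta^0_2$ for general $r$) and one might worry about how it interacts with c.e.\ subsets of $\overline{A}$. The argument sidesteps this completely: simplicity is inherited downwards in the complement, so from $\overline{A} \subseteq \overline{S}$ it is automatic that every c.e.\ subset of $\overline{A}$ is a c.e.\ subset of $\overline{S}$ and therefore finite, without any priority argument being needed.
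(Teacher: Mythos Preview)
Your proof is correct and follows essentially the same approach as the paper: take $A = \mathcal{R}(X) \cup S$ with $\rho(\mathcal{R}(X)) = r$ and $S$ simple of density $0$, use finite partial unions of the $R_k$'s for the lower bound $\alpha(A) \geq r$, and use simplicity of $S$ to force $\varphi^{-1}(0)$ to be finite for the upper bound $\alpha(A) \leq r$. The only additions you make are the explicit verification that $\rho(A) = r$ via subadditivity and a recap of the Post construction, both of which the paper leaves implicit.
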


\begin{proof}  Let $.b_0 b_1 ...$ be the binary expansion of $r$.
By Corollary \ref{rhoadds} the set $D = \bigcup_{ b_i = 1} R_i$
has density $r$. We let $A = D \cup S$ where $S$ is a simple set
of density $0$ (Proposition \ref{simple}).  If $s < r$ we can take enough digits of the expansion
of $r$ so that if  $t = .b_1 \dots b_n$  then $s < t < r$. The set $C$
which is the union of the $R_j$ where  $j \le n, b_j = 1$
is a computable subset of $A$ of density $t$ so $A$ is computable at density $t$.
Since we can take $t$ arbitrarily close to $r$, it follows that $\alpha(A) \geq r$.    To show that
$\alpha(A) \leq r$, assume that  $\varphi$ is a computable partial function which agrees
with $A$ on its domain $W$.   We must show that $\underline{\rho}(W) \leq r$.      For $i \in \{0,1\}$, let   $ T_i = \{n : \varphi(n) = i\}$, so $W = T_0 \cup T_1$.   Then $T_0$ is c.e.\ and $T_0 \subseteq \overline{A} \subseteq \overline{S}$, so $T_0$ is finite because $S$ is simple.   Also $T_1 \subseteq A$, so $\underline{\rho}(T_1) \leq \underline{\rho}(A) = r$, so $\underline{\rho}(W) \leq r$, as needed to complete the proof.
\end{proof}

   In analogy with  partial computability at densities less than $1$, Hirschfeldt, Jockusch, McNicholl and Schupp \cite{HJMS} introduced the analogous concepts for coarse computability. 
We define 
$$ A \triangledown C =  \{ n: A(n) = C(n) \}$$ 
and call $A \triangledown C$ the \emph{symmetric agreement} of $A$ and $C$.   Of course,
the symmetric agreement of $A$ and $C$ is the complement of the symmetric difference of $A$ and $C$.
  
\begin{defn}(\cite{HJMS}, Definition 1.5)  A set $A$ is \emph{coarsely computable at density} $r$ if there is a computable
set $C$ such that the lower density of the symmetric agreement of $A$ and $C$ is at least $r$, that is
$$ \underline{\rho}( A \triangledown C) \ge r $$
\end{defn}

\begin{defn}(\cite{HJMS}, Definition 1.6) If  $A \subseteq \mathbb{N}$, the \emph{coarse computability bound} of $A$ is
$$\gamma (A) : = \sup \{ r : A  \mbox{ \ is coarsely computable at density }   r \}$$
\end{defn}

\begin{prop}  (\cite{HJMS}, Lemma 1.7) 
For every set $A$, $\alpha(A) \leq \gamma(A)$.
\end{prop}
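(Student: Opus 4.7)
The plan is to take any $r$ at which $A$ is partially computable and show $\gamma(A) \geq r$; taking the supremum over such $r$ then gives $\alpha(A) \leq \gamma(A)$. The main tool will be Theorem \ref{approx}, which lets us replace the (merely c.e.) domain of a partial algorithm by a computable subset of almost the same lower density.

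So fix a partial computable $\varphi$ agreeing with $A$ on its domain $W$, with $\underline{\rho}(W) \geq r$, and fix $\epsilon > 0$. First I would apply Theorem \ref{approx} to the c.e.\ set $W$ to obtain a computable $V \subseteq W$ with $\overline{\rho}(W \setminus V) < \epsilon$. Because $V$ and $W \setminus V$ partition $W$, we have $\rho_n(V) = \rho_n(W) - \rho_n(W \setminus V)$, and the standard inequality $\liminf(a_n - b_n) \geq \liminf a_n - \limsup b_n$ yields $\underline{\rho}(V) \geq \underline{\rho}(W) - \overline{\rho}(W \setminus V) \geq r - \epsilon$.

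Next I would set $C = \{n \in V : \varphi(n) = 1\}$. This set is computable because membership in $V$ is decidable and $\varphi$ is guaranteed to halt on every element of $V \subseteq W$. Since $\varphi$ agrees with $A$ throughout $W$, we have $C(n) = A(n)$ for every $n \in V$, so $V \subseteq A \triangledown C$ and therefore $\underline{\rho}(A \triangledown C) \geq r - \epsilon$. This shows $A$ is coarsely computable at density $r - \epsilon$ for every $\epsilon > 0$, hence $\gamma(A) \geq r$, as required. The only substantive step is Theorem \ref{approx}; the rest is bookkeeping, trading the c.e.\ domain $W$ for a slightly smaller computable subdomain on which the partial algorithm is already total, with correctness inherited automatically. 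Barzdin's original weaker theorem would not be enough here, since converting a bound on $\underline{\rho}(W) - \underline{\rho}(V)$ requires control of $\overline{\rho}(W \setminus V)$ rather than $\underline{\rho}(W \setminus V)$, which is precisely the improvement Theorem \ref{approx} supplies.
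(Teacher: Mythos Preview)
Your proof is correct and follows exactly the approach the paper indicates: the paper simply states that the result ``follows easily from Theorem~\ref{approx}'', and the detailed argument it later gives for the analogous lemma $\delta(A)=\gamma(A)$ is essentially identical to yours (replace the c.e.\ domain by a computable subset losing at most $\epsilon$ in lower density, then make $\varphi$ total by setting it to $0$ off that subset). Your observation that Barzdin's weaker bound on $\underline{\rho}(W\setminus V)$ would not suffice is a nice extra remark.
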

This result follows easily from Theorem \ref{approx}.
    
The next result is due to Greg Igusa and shows that this is the \emph{only} restriction on the
values taken simultaneously by $\alpha$ and $\gamma$.

\begin{thm} (Igusa, personal communication) If $r$ and $s$ are real numbers with $0 \leq r
  \leq s \leq 1$, there is a set $A$ such that $\alpha(A) = r$ and
  $\gamma(A) = s$.
\end{thm}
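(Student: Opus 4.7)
The plan is to pick computable sets $D \subseteq E$ with $\rho(D) = r$ and $\rho(E) = s$ (via Corollary \ref{rhoadds}), let $F = \omega \setminus E$ of density $1-s$, fix a simple set $S$ of density $0$ (Proposition \ref{simple}), and build a subset $P \subseteq F$ by diagonalization. The witness will be
\[
  A \;=\; D \,\cup\, (S \cap E) \,\cup\, P.
\]
Here $D$ supplies the lower bound $\alpha(A) \ge r$ via $\chi_D$, the simple set $S$ forces every c.e.\ subset of $\overline{A} \cap E$ to be finite (since $\overline{S}$ is immune), and the two jobs of capping $\alpha(A)$ at $r$ and pinning $\gamma(A)$ to $s$ both land on the construction of $P$.

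The heart of the argument is to produce $P \subseteq F$ meeting two families of requirements: (i) for every total $\varphi_e$, writing $C_e = \{x : \varphi_e(x) = 1\}$, we have $\liminf_n \rho_n(F \cap (P \triangledown C_e)) = 0$; and (ii) both $P$ and $F \setminus P$ are immune. I would handle (i) by earmarking infinitely many of the rapidly growing blocks $J_n := F \cap [n!, (n+1)!)$ for each index $e$ and, on each assigned $J_n$, setting $P \cap J_n = J_n \setminus C_e$. Then $F \cap (P \triangledown C_e) \cap [0,(n+1)!)$ is contained in $[0, n!)$ together with the finitely many immunity flips inside $J_n$, so $\rho_{(n+1)!}(F \cap (P \triangledown C_e)) \le 1/(n+1) + o(1)$, and the $\liminf$ is driven to $0$ along the assigned subsequence. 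Requirement (ii) is met by the standard single-element flips on fresh witnesses for each $W_e$. Because total computable functions have no effective index set, $R^\gamma_e$ acts on $J_n$ only after $\varphi_e$ has been observed to converge on every element of $J_n$; this costs nothing when $\varphi_e$ is merely partial, and under fair round-robin scheduling each truly total $\varphi_e$ receives infinitely many fully-processed blocks.

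Given such $P$, verification is routine. For $\alpha(A) \le r$: any partial computable $\varphi$ agreeing with $A$ on domain $W = W_0 \cup W_1$ has $W_0 \cap E$ a c.e.\ subset of $\overline{S}$ (finite by simplicity), $W_1 \cap F \subseteq P$ and $W_0 \cap F \subseteq F \setminus P$ finite by (ii), and $W_1 \cap E \subseteq D \cup (S \cap E)$ of upper density at most $r$; hence $\overline{\rho}(W) \le r$. For $\gamma(A) \ge s$: with $C = D$, $A \bigtriangleup D$ equals $P$ up to a subset of $S$, so $\overline{\rho}(A \bigtriangleup D) = \overline{\rho}(P) = 1 - s$, where the last equality follows from applying (i) to the constant zero function to conclude $\underline{\rho}(F \setminus P) = 0$. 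For $\gamma(A) \le s$: for any computable $C$,
\[
  \overline{\rho}(A \bigtriangleup C) \;=\; \rho\bigl(D \bigtriangleup (C \cap E)\bigr) \;+\; \overline{\rho}\bigl(P \bigtriangleup (C \cap F)\bigr),
\]
and (i) applied to the computable set $C \cap F$ forces the second summand to equal $(1-s) - \underline{\rho}(F \cap (P \triangledown (C\cap F))) = 1-s$, giving $\overline{\rho}(A \bigtriangleup C) \ge 1 - s$.

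The main obstacle is engineering a single priority construction that simultaneously meets (i) and (ii) across all indices $e$. One must schedule so that every genuinely total $\varphi_e$ eventually sees infinitely many fully-processed blocks, and ensure that the immunity flips sprinkled through each $J_n$ (one per requirement active at the corresponding stage) total only $o(|J_n|)$ elements, so that the bound $\rho_{(n+1)!}(F \cap (P \triangledown C_e)) \le 1/(n+1) + o(1)$ survives the cumulative perturbations. The rapid growth of $|J_n|$ relative to $n!$ provides precisely the slack needed for both types of requirements to coexist without injury.
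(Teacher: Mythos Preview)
The paper does not give a proof of this theorem (it is stated as a personal communication from Igusa), so there is nothing to compare against directly. Assessing your argument on its merits: the overall design is reasonable, but there is a real gap at the outset. You claim to ``pick computable sets $D \subseteq E$ with $\rho(D) = r$ and $\rho(E) = s$ (via Corollary~\ref{rhoadds}),'' but that corollary does not produce \emph{computable} sets; indeed the paper notes that a real in $[0,1]$ is the density of a computable set if and only if it is $\Delta^0_2$. For arbitrary $r,s$ your $D$ and $E$ cannot in general be taken computable, and then your lower bounds $\alpha(A)\ge r$ (via the partial computable function $\chi_D$) and $\gamma(A)\ge s$ (via the computable witness $C=D$) fail as stated. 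The repair is exactly the device used in the survey's proof of the \cite{DJS} result that $\alpha(A)=r$ is always attainable: take $D = \mathcal{R}(B_0) \subseteq E=\mathcal{R}(B_1)$ for suitable $B_0\subseteq B_1$, and whenever a computable witness is needed use the computable truncations $D_k = \bigcup\{R_n: n\in B_0,\ n\le k\}$, whose densities increase to $r$. Your construction of $P$ and your upper-bound arguments do not actually require $E$ to be computable---except that your step ``$W_0\cap E$ is a c.e.\ subset of $\overline{S}$, hence finite'' silently assumes $E$ is c.e. Taking $A = D\cup S\cup P$ in place of $D\cup(S\cap E)\cup P$ gives $W_0\subseteq\overline{A}\subseteq\overline{S}$ outright and removes that dependence.

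A smaller point: your displayed identity
$\overline{\rho}(A\bigtriangleup C)=\rho\bigl(D\bigtriangleup(C\cap E)\bigr)+\overline{\rho}\bigl(P\bigtriangleup(C\cap F)\bigr)$
is unjustified---upper density is not additive over a partition, and the first density need not even exist. You only need the inequality $\overline{\rho}(A\bigtriangleup C)\ge \overline{\rho}\bigl(P\bigtriangleup(C\cap F)\bigr)=1-s$, which follows from the inclusion $P\bigtriangleup(C\cap F)\subseteq A\bigtriangleup C$ (modulo a density-$0$ subset of $S$). With these corrections your strategy goes through.
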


 The coarse computability bound  of every 1-random  set $A$ is $1/2$.  
This is because for every computable set $C$, the set $A \triangledown C$ is also $1$-random  and so has density $1/2$.
  
Recall that we defined the distance function $D(A,B) = \overline{\rho}(A \bigtriangleup B)$.    It is easily seen that $D$ satisfies the triangle inequality and hence is a pseudometric on Cantor space $2^\omega$.   Since $D(A,B) = 0$ exactly when $A$ and $B$ are coarsely similar, $D$ is actually a metric
 on the space $\mathcal{S}$  of coarse equivalence classes.

 Note that $A$ is coarsely computable  at density $1$ if and only if $A$ is coarsely computable. 
 To exhibit  many sets with $\gamma =1$ which are not coarsely computable, 
 again consider sets of the form $\mathcal{R}(A) = \bigcup_{n \in A} R_n$.
Essentially the same argument as before shows  that $\gamma(\mathcal{R}(A)) = 1$ for every $A$.
For each $k$, use the finite list of which $i \le k$ are in $A$, to answer correctly  on 
$\bigcup_{i = 0}^{k} R_i$ and answer ``yes'' on all $R_l$ with $l >k$.  This algorithm is correct 
with density at least $1 - \frac{1}{2^{k+1}}$.     
  
\begin{lem}(\cite{HJMS}).  For $A \subseteq \omega,  \underline{\rho}(A) = 1- \overline{\rho}(\overline{A})$
\end{lem}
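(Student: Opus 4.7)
The plan is to derive the identity directly from the finite-stage relation between $\rho_n(A)$ and $\rho_n(\overline{A})$, then pass to limits.

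The first step is to observe that for every $n \geq 1$, the set $\{m : m < n\}$ is partitioned into $\{m \in A : m < n\}$ and $\{m \in \overline{A} : m < n\}$, so counting gives the pointwise identity
\[
\rho_n(A) + \rho_n(\overline{A}) = 1.
\]
Equivalently, $\rho_n(A) = 1 - \rho_n(\overline{A})$ for every $n \geq 1$.

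The second step is to take $\liminf$ of both sides and use the elementary fact that $\liminf_n (c - x_n) = c - \limsup_n x_n$ for any constant $c$ and any bounded sequence $\{x_n\}$ (here $c = 1$ and $x_n = \rho_n(\overline{A}) \in [0,1]$). Applied to our identity, this yields
\[
\underline{\rho}(A) = \liminf_n \rho_n(A) = \liminf_n \bigl(1 - \rho_n(\overline{A})\bigr) = 1 - \limsup_n \rho_n(\overline{A}) = 1 - \overline{\rho}(\overline{A}),
\]
as claimed.

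There is no real obstacle: the proof is essentially a one-line manipulation of $\liminf$ and $\limsup$. The only thing worth being careful about is that the finite-stage densities $\rho_n$ are defined for $n \geq 1$ (so that the denominator is nonzero) and take values in $[0,1]$, which is exactly what is needed to apply the standard $\liminf$/$\limsup$ identity. Since the excerpt already restricts to $n \geq 1$ in the definition of $\rho_n(A)$, no additional setup is required.
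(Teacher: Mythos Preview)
Your proof is correct and follows essentially the same approach as the paper: the paper also observes that $\rho_n(A) = 1 - \rho_n(\overline{A})$ for each $n$ and then passes to the limit. Your version is in fact more carefully stated, since you explicitly invoke the identity $\liminf_n(c - x_n) = c - \limsup_n x_n$, whereas the paper's one-line justification (``taking the least upper bound of both sides'') is somewhat loosely phrased.
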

  
For each $n,  \rho_n(A) = 1- \rho_n(\overline{A})$, so the lemma follows by taking the least upper bound of both sides. 
 As a corollary we have
$$ \underline{\rho}( A \triangledown C ) = 1 - D(A,C). $$
So, $\gamma(A) = 1$ if and only if $A$ is a limit of computable sets in the pseudo-metric $D$.
In general, $\gamma(A) = r$ means that the distance from $A$ to the family  $\mathcal{C}$ of computable sets is $1-r$.

\begin{thm}(\cite{HJMS}, Theorems 3.1 and 3.4). For every $r \in (0,1]$ there is a set $A$ with $\gamma(A) = r$  such that $A$ is \emph{not} coarsely computable at density $r$, and a set $B$ such that $\gamma(B) = r$ and $B$ \emph{is} coarsely computable at density $r$.
\end{thm}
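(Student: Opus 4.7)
The plan is to construct $A$ and $B$ separately using a stratification of $\omega$ into fast-growing intervals $I_k = [k!, (k+1)!)$, so that $|I_k|/(k+1)! \to 1$ and each interval dominates the initial segment up to its right endpoint. This is the same stratification used earlier in the $\mathcal{I}$-construction.

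For the attained case (b), when $r \ge 1/2$ we take $B$ to be Martin-L\"of random with respect to the Bernoulli-$r$ measure. Then $\rho(B) = r$, and for every computable $C$ the law-of-large-numbers property of randomness yields $\rho_n(B \triangledown C) = (1-r) + (2r-1)\rho_n(C) + o(1) \le r + o(1)$, so $\underline{\rho}(B \triangledown C) \le r$, with equality attained by $C^* = \omega$ (since $B \triangledown \omega = B$). For $r < 1/2$ this simple approach fails: any set with a limiting density has $\gamma \ge \max(\rho, 1-\rho) \ge 1/2 > r$, so $B$ must have oscillating density. We then build $B$ by a priority argument, setting $B = C^*$ on an infinite sub-sequence of intervals (for a fixed computable $C^*$ of density $r$) to force $\underline{\rho}(B \triangledown C^*) \ge r$, while diagonalizing on other intervals against every computable $C \ne C^*$ so that $\underline{\rho}(B \triangledown C) \le r$.

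For the non-attained case (a), we perform a finite-injury priority argument. The negative requirements $N_e$ demand $\underline{\rho}(A \triangledown C_e) < r$ for each potentially-total $\{0,1\}$-computable $C_e$; the positive requirements ask that computable sets $D_k$ (defined during the construction) satisfy $\underline{\rho}(A \triangledown D_k) \ge r - 1/k$. We meet $N_e$ by dedicating a sparse computable sub-sequence of intervals to it: on each such $I_k$, set $A \restriction I_k$ so that $|A \bigtriangleup C_e \cap I_k| > (1 - r + \delta_e)|I_k|$ for a fixed small $\delta_e > 0$. By the fast growth of $|I_k|$, this forces $\rho_{(k+1)!}(A \bigtriangleup C_e) \ge 1 - r + \delta_e - o(1)$ on infinitely many $k$, hence $\overline{\rho}(A \bigtriangleup C_e) \ge 1 - r + \delta_e$, so $\underline{\rho}(A \triangledown C_e) \le r - \delta_e < r$. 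On the remaining intervals, $A$ matches the appropriate $D_k$ closely enough to give $\gamma(A) \ge r$, since the $\delta_e$'s can be chosen tending to $0$ so that $\sup_e(r - \delta_e) = r$ (though the sup is not attained).

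The main obstacle in both parts is coordinating the opposing requirements: the forced disagreements imposed by the $N_e$'s must not destroy the approximation densities witnessed by the $D_k$'s (in part (a)) or by $C^*$ (in part (b)). The fast growth of the $|I_k|$'s is the key: since one later interval dominates the entire initial segment of earlier ones, each requirement can act on a sparse sub-sequence of intervals without interfering with the rest, so a standard priority scheme suffices. The subtlety in part (b) for $r < 1/2$ is calibrating the diagonalization so that $\underline{\rho}(B \triangledown C^*) = r$ is attained exactly, not exceeded, which requires the $C^*$-agreement intervals to be placed so that the $\liminf$ of $\rho_n(B \triangledown C^*)$ equals $r$.
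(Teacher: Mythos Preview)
The survey does not itself prove this theorem; it merely cites Theorems~3.1 and~3.4 of \cite{HJMS}, so there is no in-paper proof to compare your sketch against.  I therefore assess the proposal on its own merits.  Your Bernoulli-$r$ randomness approach for part~(b) with $r\ge 1/2$ is correct and attractive; the asymptotic $\rho_n(B\triangledown C)=(1-r)+(2r-1)\rho_n(C)+o(1)$ does hold for every computable $C$, though it deserves a word of justification (for instance, the restriction of a Bernoulli-$r$ random to any infinite computable set is again Bernoulli-$r$ random, which gives the needed law of large numbers on $C$ and on $\overline{C}$ separately).

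However, the resolution you offer for what you call the ``main obstacle'' is wrong, and this leaves a genuine gap in part~(a) and in part~(b) for $r<1/2$.  You argue that because $I_k$ dominates $[0,(k+1)!)$, a negative requirement acting on a \emph{sparse} sub-sequence of intervals cannot interfere with the positive ones.  For $\liminf$-type requirements this is exactly backwards.  If $N_e$ forces $A$ to disagree heavily with $C_e$ on $I_k$, then at that same $k$ the value $\rho_{(k+1)!}(A\triangledown D_j)$ may drop to nearly $0$; since $N_e$ acts on infinitely many such $k$, this drags $\underline{\rho}(A\triangledown D_j)$ down no matter how sparse the $N_e$-intervals are.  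The fast growth of the $|I_k|$ amplifies rather than suppresses this interference.  What is missing is control over the $(r-\delta_e)$-fraction of each $N_e$-interval that is \emph{not} used for diagonalization: if that portion is always filled with a single fixed computable default, and $D_j$ is defined to mimic $A$ exactly on all $N_e$-intervals with $e<j$ (this requires only finite non-uniform information about $\varphi_0,\dots,\varphi_{j-1}$, so $D_j$ is computable) and to equal the default on the remaining intervals, then on every interval $A$ agrees with $D_j$ on at least an $(r-\sup_{e\ge j}\delta_e)$-fraction, yielding $\underline{\rho}(A\triangledown D_j)\ge r-\sup_{e\ge j}\delta_e\to r$.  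An analogous repair is needed for part~(b) with $r<1/2$: your sketch does not explain how $B$ can agree with $C^*$ at lower density at least $r$ on the infinitely many diagonalization intervals, and the phrase ``calibrating the diagonalization'' does not by itself supply such a mechanism.
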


   We have seen that if $A$ is not $\Delta^0_2$ then $\mathcal{R}(A)$ is Turing equivalent to $A$,   and $\gamma(\mathcal{R}(A)) = 1$, 
but  $\mathcal{R}(A)$ is not coarsely computable.  Also,  every non-zero c.e.\ degree contains a c.e.\ set $A$ which is generically computable but not coarsely computable (\cite{DJS}, Theorem 4.5).  So the question is whether or not \emph{every} nonzero Turing degree contains a set $A$ such that $\gamma(A) = 1$ but $A$ is not coarsely computable.    The following result gives a negative answer.    The proof includes a crucial lemma due to Joe Miller.

\begin{thm} (\cite{HJMS}, Theorem 5.12)  If $A$ is computable from a $\Delta^0_2$ $1$-generic set 
 and $\gamma(A) = 1$, then $A$ is coarsely computable.
\end{thm}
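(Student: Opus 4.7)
The plan is to convert the density-$1$ approximation given by $\gamma(A) = 1$ into a single computable coarse description of $A$, using the $1$-genericity of $G$ together with $G \leq_T 0'$ to make the conversion effective. Fix a Turing functional $\Phi$ with $\Phi^G = A$. The first step is to capture the approximation property in a uniformly c.e.\ family of sets of strings: for each rational $\epsilon > 0$, let $U_\epsilon$ be the c.e.\ set of all $\sigma \in 2^{<\omega}$ for which there exist $e, N, t$ with $N \geq 1/\epsilon$ such that $\Phi^\sigma(x) \downarrow$ and $\phi_e(x) \downarrow$ within $t$ steps for each $x < N$, and $|\{x < N : \Phi^\sigma(x) \neq \phi_e(x)\}| < \epsilon N$. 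A short verification shows $G$ has a prefix in every $U_\epsilon$: using $\gamma(A) = 1$, pick a computable $C$ with $D(A, C) < \epsilon / 2$, take $N$ past a stage where $\rho_N(A \bigtriangleup C) < \epsilon$, let $e$ be an index of $C$, and take a prefix $\sigma$ of $G$ long enough that $\Phi^\sigma$ computes $A$ on $[0, N)$ within some finite stage $t$.

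Second, I would invoke Miller's lemma to extract a single computable coarse description $C^*$ of $A$ from the family $\{U_{2^{-n}}\}_n$. The lemma's role is to exploit that $G$ is both $\Delta^0_2$ and $1$-generic: locating witness pairs $(\sigma_n, e_n)$ with $\sigma_n \subset G$ and $\sigma_n \in U_{2^{-n}}$ is a priori only $0'$-computable, but any obstruction to turning these into a computable sequence of indices would manifest as a c.e.\ set of strings that $G$ would, by $1$-genericity, be forced to meet in a way incompatible with $\Phi^G = A$. Miller's lemma formalises this tension, producing a computable sequence of indices whose corresponding computable sets can be combined into a single computable set $C^*$ with $D(A, C^*) = 0$. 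This directly yields $\rho(A \bigtriangleup C^*) = 0$, so $A$ is coarsely computable.

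I expect the main obstacle to be this middle step: passing from a $0'$-computable witness selection to a computable one. The $\Delta^0_2$ hypothesis on $G$ is essential, for otherwise $A = \mathcal{R}(G)$ would be a counterexample: it satisfies $A \leq_T G$ and $\gamma(A) = 1$ but, by Theorem \ref{coarseR}, is coarsely computable only when $G \leq_T 0'$.
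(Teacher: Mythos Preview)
The paper is a survey and does not give a proof of this theorem; it only states the result and remarks that ``the proof includes a crucial lemma due to Joe Miller.''  There is therefore nothing in the paper to compare your argument against beyond that single sentence, and your proposal is consistent with it: you correctly locate the entire difficulty in the passage from a $0'$-computable selection of witnesses to a computable one, you attribute that step to Miller, and your closing paragraph correctly explains via Theorem~\ref{coarseR} why the $\Delta^0_2$ hypothesis on $G$ cannot be dropped.

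That said, two points in your outline are not yet arguments.  First, Miller's lemma is invoked but never stated, so the central step remains a black box in your write-up exactly as it is in the survey; a referee could not check anything here.  Second, your sets $U_\epsilon$ only certify that some $\varphi_e$ agrees with $\Phi^\sigma$ on the \emph{finite} initial segment $[0,N)$ to within a fraction $\epsilon$.  Even granting a computable sequence of witnesses $(\sigma_n, e_n, N_n)$ with $\sigma_n \subset G$, this does not by itself yield a single computable $C^*$ with $D(A,C^*)=0$: nothing controls $\varphi_{e_n}$ beyond $N_n$, nothing forces the $N_n$ to grow in a usable way, and the phrase ``can be combined'' hides a genuine construction and density estimate that you have not supplied.
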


\begin{thm} (\cite{HJMS}, Theorem 2.1)
Every nonzero (c.e.) degree contains a (c.e.) set $B$ such that
$\alpha(B) = 0$ and $\gamma(B) = \frac{1}{2}$.
\end{thm}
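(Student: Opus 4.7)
The plan is a finite-injury priority construction of $B$ as a c.e.\ set, permitted by a fixed noncomputable c.e.\ representative $A\in\mathbf a$. Partition $\omega$ into rapidly-growing intervals $I_n=[n!,(n+1)!)$, each split as $I_n=L_n\sqcup R_n$ into equal left and right halves of length $|I_n|/2$. I would enumerate $B$ maintaining the invariants $|B\cap L_n|=O(n)=o(|I_n|)$ and $B\cap R_n$ close to $R_n$ on most intervals, with a designated $d_n\in R_n$ placed in $B$ iff $n\in A$ (so $A\le_T B$; all enumerations are $A$-permitted, so $B\le_T A$). A short density computation then shows $\underline\rho(B)=0$ (at positions near the midpoint of $I_n$ the density is dominated by $n!/2+O(n)$ divided by $\approx n\cdot n!/2$, which tends to $0$), $\overline\rho(B)\le 1/2$ from $|B\cap I_n|\le|R_n|+O(n)$, and $\overline\rho(B)\ge 1/2$ along non-target $I_n$ with $n\in A$. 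Hence $\gamma(B)\ge 1-\overline\rho(B)=1/2$ via $C=\emptyset$.

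It remains to force $\alpha(B)=0$ and $\gamma(B)\le 1/2$. These reduce to the requirements $\mathcal N_{e,f}$: if $W_e\subseteq\overline B$ and $W_f\subseteq B$ then $\underline\rho(W_e\cup W_f)=0$, and $\mathcal M_e$: if $\varphi_e$ is total and $\{0,1\}$-valued with $C_e:=\varphi_e^{-1}(1)$, then $\overline\rho(B\bigtriangleup C_e)\ge 1/2$. Every partial computable $\varphi$ agreeing with $B$ on its domain yields the pair $(\varphi^{-1}(0),\varphi^{-1}(1))$ fitting the hypothesis of $\mathcal N_{e,f}$, and every computable set $C$ equals $C_e$ for some total $\varphi_e$, so these requirements imply $\alpha(B)=0$ and $\gamma(B)\le 1/2$ respectively.

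For $\mathcal N_{e,f}$ I would run countably many sub-strategies indexed by $k\in\omega$. Sub-strategy $k$ monitors $\rho_n(W_e[s]\cup W_f[s])$; when the density exceeds the threshold $2^{-k}$, search for an $A$-permitted fresh witness $m\in W_e[s]\cup W_f[s]$ from the strategy's reserved pool and either enumerate $m$ into $B$ (if $m\in W_e$, breaking $W_e\subseteq\overline B$) or impose a permanent restraint on $m$ (if $m\in W_f$, breaking $W_f\subseteq B$). If the hypothesis survives every $k$, then $\overline\rho(W_e\cup W_f)\le 2^{-k}$ for all $k$, hence $\underline\rho(W_e\cup W_f)=0$. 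For $\mathcal M_e$, reserve an infinite computable family of target intervals $I_n$ for $e$ (disjoint across $e$); on a target $I_n$, wait for $\varphi_e$ to converge on $I_n$, then enumerate $R_n\setminus C_e\setminus\{d_n\}$ into $B$, plus $d_n$ if $n\in A$, while keeping $B\cap L_n=\emptyset$ on this $I_n$. A direct computation gives $|(B\bigtriangleup C_e)\cap I_n|\ge|R_n|-1=|I_n|/2-1$, and the prefix $\sum_{k<n}|I_k|=n!-1$ contributes only $o(|I_n|)$, so $\rho_{(n+1)!}(B\bigtriangleup C_e)\to 1/2$ along the target $I_n$, whence $\overline\rho(B\bigtriangleup C_e)\ge 1/2$.

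The main obstacle is to balance the density invariants against the priority actions. Only finitely many higher-priority sub-strategies act on each $I_n$, so their $L_n$-contribution is $O(n)=o(|I_n|)$ and preserves $\underline\rho(B)=0$. The $\mathcal M$-modifications on target intervals may shrink $|B\cap R_n|$ below $|R_n|$, but since distinct $\mathcal M_e$'s use disjoint target families and $A$ is infinite, infinitely many non-target $I_n$ with $n\in A$ still supply the density $1/2$ needed at the supremum of $\overline\rho(B)$. A standard finite-injury priority argument together with $A$-permitting then yields a c.e.\ set $B\equiv_T A$ with $\alpha(B)=0$ and $\gamma(B)=1/2$.
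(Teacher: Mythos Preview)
Your priority construction is vastly more elaborate than what is needed, and in fact you already have in hand the intervals that make the proof immediate. The paper simply takes $B=\mathcal I(A)=\bigcup_{n\in A}I_n$ for any noncomputable (c.e.) $A$ in the given degree. Then $B\equiv_T A$ is obvious; $\gamma(B)\le\tfrac12$ is exactly the majority-vote argument already proved in Theorem~\ref{maj}; $\gamma(B)\ge\tfrac12$ follows by taking $C$ to be the set of even numbers, since on every $I_n$ the set $\mathcal I(A)$ is constant, so it agrees with $C$ on half of $I_n$; and $\alpha(B)=0$ because if $\varphi$ agrees with $\mathcal I(A)$ on a domain of positive lower density, then for all sufficiently large $n$ the domain meets $I_n$ (as $|I_n|/(n+1)!\to 1$), and a search in $I_n$ for a point of convergence computes $A(n)$. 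No permitting, no injury, no requirements.

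Beyond the unnecessary complexity, your sketch has a genuine gap in the $\mathcal N_{e,f}$ strategy. You say sub-strategy $k$ acts when the observed density of $W_e\cup W_f$ exceeds $2^{-k}$, searching for an $A$-permitted witness to either enumerate or restrain, and that ``if the hypothesis survives every $k$, then $\overline\rho(W_e\cup W_f)\le 2^{-k}$.'' That inference does not follow from your description: the hypothesis can survive because permission never arrives, not because the density stayed small. A single action suffices to break the hypothesis, so the usual ``infinitely many requests, hence compute $A$'' permitting argument does not apply directly; you would need to arrange an unbounded sequence of independent witness requests tied to $A$-uses and argue that denial of all of them computes $A$. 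You also need to explain why restraining an element of $W_f$ is possible (it may already be in $B$ from the $R_n$-filling or from coding), and why the reserved pools have enough density to guarantee a usable witness whenever the global density exceeds the threshold. These are all repairable in principle, but the point is that none of it is necessary: $\mathcal I(A)$ already does the job in one line.
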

\begin{proof}
   Given $A$, let $B = \mathcal{I}(A)$.   The majority vote argument about $\mathcal{I}(A)$ in the proof of Theorem \ref{maj} actually shows that if $A$ is not computable then $\gamma(\mathcal{I}(A)) \le \frac{1}{2}$.   If $E$ is the set of even numbers, then
   $E \triangledown \mathcal{I}(A)$ has density $1/2$, so $\gamma(\mathcal{I}(A)) \geq \frac{1}{2}$.    Also, it is easily seen $\alpha(\mathcal{I}(A)) = 0$ if $A$ is noncomputable.
 \end{proof}

We  observe that large classes of degrees contain sets $A$ with
$\gamma(A) = 0$.

  A set $S \subseteq 2^{ < \omega}$ of finite binary strings is
  \emph{dense} if every string has some extension in $S$.  Stuart
  Kurtz \cite{K} defined a set $A$ to be \emph{weakly $1$-generic} if
  $A$ meets every dense c.e.\ set $S$ of finite binary strings.

\begin{thm}(\cite{HJMS}, proof of Theorem 2.1.)  If $A$ is a weakly $1$-generic set, then  $\gamma(A) = 0$. 
\end{thm}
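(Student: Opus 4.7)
The plan is to unravel the definitions and then construct, for each computable set $C$, a family of dense c.e.\ sets of strings which force $A$ to have arbitrarily low symmetric-agreement density with $C$ on infinitely many initial segments. By definition, $\gamma(A) = 0$ amounts to the statement that for every computable set $C$ and every $\epsilon > 0$ there are infinitely many $n$ with $\rho_n(A \triangledown C) < \epsilon$, i.e.\ $\underline{\rho}(A \triangledown C) = 0$. So the goal is to show that, given $C$ computable and $\epsilon, k > 0$, the characteristic function of $A$ has an initial segment of length at least $k$ on which the fraction of positions of agreement with $C$ is less than $\epsilon$.

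For each such $C$, $\epsilon$, and $k$, I would define
\[
S_{C,\epsilon,k} = \bigl\{ \sigma \in 2^{<\omega} : |\sigma| \geq k \text{ and } |\{i < |\sigma| : \sigma(i) = C(i)\}| < \epsilon \cdot |\sigma| \bigr\}.
\]
Since $C$ is computable, $S_{C,\epsilon,k}$ is a computable, hence c.e., set of strings. The key step is to verify that it is dense in $2^{<\omega}$. Given an arbitrary $\tau \in 2^{<\omega}$, choose $N$ so large that $N \geq k - |\tau|$ and $|\tau|/(|\tau| + N) < \epsilon$, and let $\sigma \supseteq \tau$ be the extension of length $|\tau| + N$ defined by $\sigma(i) = 1 - C(i)$ for $|\tau| \leq i < |\tau| + N$. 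On the tail $[|\tau|, |\tau|+N)$ the string $\sigma$ disagrees with $C$ at every position, so the total number of agreements between $\sigma$ and $C$ on $[0, |\sigma|)$ is at most $|\tau|$. Hence the agreement fraction is at most $|\tau|/(|\tau|+N) < \epsilon$, and $\sigma \in S_{C,\epsilon,k}$.

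Now apply weak $1$-genericity of $A$: for every computable $C$, every rational $\epsilon > 0$, and every $k$, the set $S_{C,\epsilon,k}$ is a dense c.e.\ set of strings, so some initial segment $A \restriction n$ of $A$ lies in it. This initial segment has length $n \geq k$, and it satisfies $\rho_n(A \triangledown C) < \epsilon$. Letting $k \to \infty$ produces infinitely many such $n$, and letting $\epsilon \to 0$ along a rational sequence gives $\underline{\rho}(A \triangledown C) = 0$. Since this holds for every computable $C$, we conclude $\gamma(A) = 0$.

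The only delicate point is confirming density of $S_{C,\epsilon,k}$ and I do not anticipate a real obstacle here; the construction works uniformly for every computable $C$ because we can explicitly flip each newly appended bit against $C$ to drive the agreement fraction below $\epsilon$. The argument uses weak $1$-genericity rather than $1$-genericity, which is appropriate since meeting (rather than meeting-or-avoiding) a dense c.e.\ set of strings suffices.
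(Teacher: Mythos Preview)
Your proof is correct and follows essentially the same approach as the paper: both define, for each computable $C$ (the paper writes $f$), a family of computable dense sets of strings indexed by a threshold $\epsilon$ (the paper uses $1/n$) and a minimum length $k$ (the paper uses $j$), and use weak $1$-genericity to meet each of them. You additionally spell out the density argument by explicitly flipping bits against $C$, which the paper leaves implicit.
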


\begin{proof}  If $f$ is a computable function then, for each $n,j > 0$, define
  \[
S_{n,j} = \left\{ \sigma \in 2^{< \omega} : |\sigma| \ge j \enspace \&
\enspace \rho_{|\sigma|}(\{k < |\sigma| : \sigma(k) = f(k)\}) <
\frac{1}{n} \right\}.
\]
Each set $S_{n,j}$ is computable and dense.   $A$ meets each
$S_{n,j}$ since $A$ is weakly $1$-generic.   Thus $\{ k: f(k) = A(k)\}$ has lower density $0$.
\end{proof}

  Let $D_n$ be the finite set with canonical index $n$, so $n = \sum \{2^i : i \in D\}$.

Recall that a set $A$ is \emph{hyperimmune} if $A$ is infinite and there is no computable function $f$
such that the sets $D_{f(0)}, D_{f(1)},...$ are pairwise disjoint and all intersect $A$, where
$D_n$ is the finite set with canonical index $n$.    A degree $\mathbf{a}$ is called
\emph{hyperimmune} if it contains a hyperimmune set and otherwise \emph{hyperimmune-free}.
 Kurtz \cite{K} proved that the weakly $1$-generic degrees coincide with
  the hyperimmune degrees. We thus have the following corollary.

\begin{cor}  (\cite{HJMS}, Theorem 2.2)  Every hyperimmune degree contains a set $A$ with $\gamma(A) = 0$.
\end{cor}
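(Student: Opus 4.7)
The corollary follows almost immediately by combining the two ingredients stated just above it, so my proof proposal is really a two-step chaining argument. Let $\mathbf{a}$ be an arbitrary hyperimmune Turing degree. By Kurtz's theorem, which identifies the weakly $1$-generic degrees with the hyperimmune degrees, there exists a weakly $1$-generic set $A \in \mathbf{a}$. I would then invoke the preceding theorem (the one for which we just saw the proof via the families $S_{n,j}$) to conclude that $\gamma(A) = 0$, and this is exactly what the corollary asserts.

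The only thing worth being slightly careful about is the direction in which Kurtz's characterization is being applied: we need that every hyperimmune degree \emph{contains} a weakly $1$-generic set, not merely that every weakly $1$-generic set has hyperimmune degree. The statement that the two classes of degrees \emph{coincide} gives us both directions, so this is fine. Likewise, the theorem that weakly $1$-generic sets have $\gamma = 0$ is applied at the level of a specific set rather than a degree, which is exactly what we get from Kurtz.

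Since both inputs are already available, there is no real obstacle and no further argument to carry out. If I were expanding this into a self-contained proof for a reader unfamiliar with Kurtz's theorem, the only substantive content would be recalling that hyperimmunity of a degree is equivalent to the existence of a weakly $1$-generic member; but in the present survey context that equivalence has just been cited, so the corollary reduces to a one-line deduction.
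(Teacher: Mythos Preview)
Your proposal is correct and matches the paper's approach exactly: the paper simply remarks that Kurtz's coincidence of the weakly $1$-generic and hyperimmune degrees, together with the preceding theorem that every weakly $1$-generic set has $\gamma(A)=0$, yields the corollary. Your care about which direction of Kurtz's theorem is needed is appropriate, and there is nothing to add.
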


A degree $\mathbf{a}$ is called \emph{PA} if every infinite computable tree of binary strings has an infinite $\mathbf{a}$-computable path.

\begin{prop} (\cite{ACDJL}, Proposition 1.8) If $\mathbf{a}$ is PA, then $\mathbf{a}$ contains a set $A$ with $\gamma(A) = 0$.
\end{prop}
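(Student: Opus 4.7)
The plan is to apply the PA property to a computable infinite binary tree whose paths are forced to be arbitrarily far, in upper density of symmetric difference, from every computable set, and then to code some $X \in \mathbf{a}$ into such a path on a density-$0$ set of positions.

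Fix a rapidly growing computable sequence $0 = a_0 < a_1 < a_2 < \cdots$ with $a_k / a_{k+1} \to 0$, for instance $a_k = k!$, and put $I_k = [a_k, a_{k+1})$. Fix a computable enumeration $k \mapsto (e_k, n_k)$ of $\omega \times \omega$ in which each pair occurs infinitely often. Declare $\sigma \in T$ iff for every $k$ with $a_{k+1} \leq |\sigma|$, whenever $\varphi_{e_k}(i)$ is defined within $|\sigma|$ steps for every $i \in I_k$, one has $|\{i \in I_k : \sigma(i) \neq \varphi_{e_k}(i)\}| \geq (1-1/n_k)|I_k|$. This set is computable and downward-closed under prefix, and it is infinite because, for each $K$, one can build a string of length $a_{K+1}$ in $T$ by choosing, on each block $I_k$ with $k \leq K$, the bitwise complement of $\varphi_{e_k}$ at every position where $\varphi_{e_k}$ has converged by stage $a_{K+1}$ (and arbitrary values elsewhere); this saturates the threshold whenever the convergence hypothesis is active.

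Since $\mathbf{a}$ is PA, $\mathbf{a}$ computes an infinite path $B$ through $T$. To see that $\gamma(B) = 0$, fix a computable set $C$ with characteristic function $\varphi_e$ and an integer $n \geq 1$. The pair $(e,n)$ equals $(e_k, n_k)$ for infinitely many $k$, and the totality of $\varphi_e$ ensures the tree's defining hypothesis is activated at all sufficiently large such $k$, forcing $|\{i \in I_k : B(i) \neq C(i)\}| \geq (1 - 1/n)|I_k|$. Dividing by $a_{k+1}$ and using $|I_k|/a_{k+1} = 1 - a_k/a_{k+1} \to 1$ gives $\overline{\rho}(B \bigtriangleup C) \geq 1 - 1/n$; letting $n \to \infty$ yields $\underline{\rho}(B \triangledown C) = 0$.

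Finally, to achieve $A \in \mathbf{a}$ rather than only $A \leq_T \mathbf{a}$, fix $X \in \mathbf{a}$ and define $A(2^k) = X(k)$ for every $k$ and $A(m) = B(m)$ at all other $m$. Since $\{2^k : k \in \omega\}$ has density $0$, $\underline{\rho}(A \triangledown C) = \underline{\rho}(B \triangledown C) = 0$ for every computable $C$, so $\gamma(A) = 0$; and $A \leq_T B \oplus X \leq_T \mathbf{a}$ while $X \leq_T A$, so $A \in \mathbf{a}$. The main thing to get right is the time bound $|\sigma|$ inside the definition of $T$: it must be lax enough that arbitrarily long strings can be placed in $T$ (keeping $T$ infinite), yet firm enough that every genuinely total $\varphi_e$ triggers the disagreement requirement on cofinitely many of the infinitely many intervals $I_k$ assigned to it, which the choice ``within $|\sigma|$ steps'' accomplishes automatically.
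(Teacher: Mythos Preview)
Your proof is correct and follows the same strategy as the paper: build an infinite computable binary tree all of whose paths have $\gamma = 0$, then invoke the PA property to extract a path. The paper's tree is a bit simpler---on $I_n$ it merely requires disagreement with $\varphi_n$ at every argument where $\varphi_n$ converges, and then the ``easy argument'' for $\gamma(X)=0$ uses the padding lemma (each computable set has infinitely many indices $e$, giving infinitely many intervals of full disagreement) rather than your enumeration of pairs $(e_k,n_k)$ with fractional thresholds. On the other hand, you are explicit about two points the paper glosses over: the time bound $|\sigma|$ that makes $T$ genuinely computable, and the final coding of some $X \in \mathbf{a}$ onto a density-$0$ set so that the resulting $A$ has degree exactly $\mathbf{a}$ rather than merely $\leq_T \mathbf{a}$. (One trivial nit: your enumeration of $\omega \times \omega$ should avoid $n_k = 0$, or you should declare the threshold condition vacuous there.)
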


\begin{proof}   It is straightforward to construct an infinite computable tree $T$ of binary strings such that the paths through $T$ are exactly the sets $X$ which, on every interval $I_n$, disagree with the
partial computable function $\varphi_n$ on all arguments where the latter is defined.   Then an easy argument shows that $\gamma(X) = 0$ for every path $X$ through $T$, and $T$ has an $\mathbf{a}$-computable path since $\mathbf{a}$ is PA.
\end{proof}

It is easily seen that $\alpha(\mathcal{I}(A)) = 0$ whenever $A$ is noncomputable, and hence every nonzero degree contains a set $B$ such that $\alpha(B) = 0$.   In view of the preceding results on hyperimmune and PA degrees it is natural to ask whether \emph{every}
nonzero degree contains a set $B$ such that $\gamma(B) = 0$.

This question is investigated and answered in the negative in Andrews, Cai, Diamondstone, Jockusch and Lempp \cite{ACDJL}, where the following definition was introduced.

\begin{defn} (\cite{ACDJL})  If $\bf{d}$ is a Turing degree, 
$$\Gamma(\mathbf{d}) = \inf \{ \gamma(A) : A \le_T \mathbf{d} \}  $$
\end{defn}

  Recall that the majority vote argument shows that if $A$ is any noncomputable set then $\gamma(\mathcal{I}(A)) \le 1/2$. 
Therefore  if a Turing degree has a $\Gamma$-value greater than $1/2$ then
it is computable and so has $\Gamma$-value $1$.  

We call a function $g$ a \emph{trace} of a function $f$ if $f(n) \in D_{g(n)}$ for every $n$.

  \begin{defn} (Terwijn, Zambella \cite{TZ}) A set $A$ is \emph{computably
      traceable} if there is a computable function $p$ with the property that
   every $A$-computable function $f$ has a computable trace $g$ such that
   $(\forall n) [|D_{g(n)}| \leq p(n)]$.   (Note that $p$ is independent of $f$.)
 \end{defn}  

  \begin{thm}(\cite{ACDJL}, Theorem 1.10)  If $A$ is computably traceable, then $A$ is
    coarsely computable at density $\frac{1}{2}$.
\end{thm}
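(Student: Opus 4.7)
The plan is to build a computable set $C$ with $\underline{\rho}(A \triangledown C) \ge \tfrac{1}{2}$ by using the trace to produce, on each of a sequence of blocks partitioning $\omega$, a short list of candidates for $A$ restricted to that block, and then outputting a single computable binary string that lies within Hamming distance essentially half the block-length of every candidate at once.

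Partition $\omega$ into contiguous blocks $J_n = [n^2, (n+1)^2)$ of length $m_n = 2n+1$, and let $f(n) \in \omega$ code the binary string $A \upharpoonright J_n \in \{0,1\}^{m_n}$. This $f$ is $A$-computable. Since $A$ is computably traceable, and since, by the standard robustness of the definition (cf.\ Terwijn--Zambella), the bound in the definition may be taken to be any order function, one may take a computable trace $g$ with $|D_{g(n)}| \le n+1$ and $f(n) \in D_{g(n)}$. Decoding yields a uniformly computable sequence of sets $S_n \subseteq \{0,1\}^{m_n}$ with $|S_n| \le n+1$ and $A \upharpoonright J_n \in S_n$.

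For each $n$, computably (by brute-force search over $\{0,1\}^{m_n}$) choose $\tau_n$ minimizing $\max_{\sigma \in S_n} d_H(\tau_n, \sigma)$. Hoeffding's inequality applied to a uniformly random $\tau \in \{0,1\}^{m_n}$ gives $\Pr[d_H(\tau, \sigma) > m_n/2 + t] \le e^{-2t^2/m_n}$; unioning over the at most $n+1$ elements of $S_n$ and taking $t = C\sqrt{n \log n}$ for a suitable absolute constant $C$ shows that such a $\tau$ exists with $\max_{\sigma \in S_n} d_H(\tau, \sigma) \le m_n/2 + O(\sqrt{n \log n})$, so the same bound holds for $\tau_n$. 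Define the computable set $C$ by letting $C \cap J_n$ be the set coded by $\tau_n$ (with arbitrary values on $\{0\}$).

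Since $A \upharpoonright J_n \in S_n$, the block disagreement satisfies $|(A \bigtriangleup C) \cap J_n| \le m_n/2 + O(\sqrt{n \log n})$. Summing, $|(A \bigtriangleup C) \cap [0, N^2)| \le N^2/2 + o(N^2)$, and for $k \in [N^2, (N+1)^2)$ the partial block $J_N$ contributes only $O(N) = o(N^2)$ further disagreements, so $\rho_k(A \bigtriangleup C) \le 1/2 + o(1)$. Hence $\overline{\rho}(A \bigtriangleup C) \le 1/2$, which by the lemma relating symmetric agreement and symmetric difference is equivalent to $\underline{\rho}(A \triangledown C) \ge 1/2$. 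The main obstacle I anticipate is the combinatorial-probabilistic step producing the single computable Hamming-center $\tau_n$ close to every candidate at once; its validity forces $|S_n|$ to grow much more slowly than $m_n$, which is precisely why the flexibility in the choice of trace bound for computably traceable sets is essential here.
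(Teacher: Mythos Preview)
Your argument is correct and follows the same approach the paper indicates (and that \cite{ACDJL} carries out): trace $A$ on growing blocks to obtain a short list of candidates for each block, then use a probabilistic (Hoeffding plus union bound) argument to select a single computable string within Hamming distance roughly half the block length of every candidate. The paper here records only that ``the proof is a probabilistic argument'', and your write-up supplies precisely those details, including the standard Terwijn--Zambella flexibility in the trace bound that makes the union bound go through.
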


The proof is a probabilistic argument.    Since the computably traceable sets are closed downwards under Turing reducibility, it follows easily that $\Gamma(\mathbf{a}) = \frac{1}{2}$ for every degree $\mathbf{a > 0}$ which contains a computably traceable set.

\begin{thm}(\cite{ACDJL}, Theorem 1.12)   If $A$ is a $1$-random set of hyperimmune-free Turing degree and $B \leq_T A$, then $B$ is coarsely computable at density $\frac{1}{2}$.
\end{thm}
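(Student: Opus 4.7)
My plan is to reduce to the preceding theorem, which states that computably traceable sets are coarsely computable at density $\frac{1}{2}$. The strategy is to prove that every $1$-random set $A$ of hyperimmune-free degree is itself computably traceable; downward closure of the class of computably traceable sets under $\leq_T$ (noted in the excerpt) then transfers the property to $B$, and the preceding theorem yields coarse computability of $B$ at density $\frac{1}{2}$.

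To establish computable traceability of $A$, I would combine both hypotheses as follows. Given any $A$-computable function $f = \Psi^A$, the hyperimmune-free hypothesis supplies a computable function $u$ dominating the use of $\Psi^A$. Then at each $n$, the finite ``value distribution'' $\mu_v^{(n)} := 2^{-u(n)} |\{\sigma \in 2^{u(n)} : \Psi^\sigma(n) = v\}|$ is computable uniformly in $n$, and, given any computable summable sequence $\{\varepsilon_n\}$, I can effectively determine a threshold $p(n)$ so that the top $p(n)$ values in this distribution carry mass at least $1-\varepsilon_n$. The events ``$\Psi^A(n)$ lies outside the top $p(n)$ values'' form a uniformly c.e.~sequence of clopen sets with summable measures, which by the effective Borel--Cantelli lemma defines a Martin--L\"of null set. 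Since $A$ is $1$-random, $A$ avoids this test, and the top $p(n)$ values form a valid trace of $f(n)$ for all but finitely many $n$; a finite patch then gives a total computable trace of $f$ whose size is bounded by $p$.

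The main obstacle is that the definition of computable traceability requires a single computable $p$ serving \emph{every} $A$-computable function simultaneously, whereas the construction just sketched produces a $p$ depending on the chosen functional $\Psi$ and its use bound $u$. The standard fix is to perform the construction uniformly over a universal functional $\Psi_e$, using a computable use bound of the form $u(e,n)$ and then consolidating the resulting two-parameter family of thresholds $p(e,n)$ into a single $p(n)$ by pairing and diagonalization. Once this uniformity is handled, computable traceability of $A$ is established; the downward closure under $\leq_T$ gives the same property for $B$; and the preceding theorem delivers the desired coarse computability of $B$ at density $\frac{1}{2}$.
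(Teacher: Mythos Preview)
Your plan cannot work because no $1$-random set is computably traceable. Suppose $A$ were computably traceable; since the class of computably traceable sets does not depend on the choice of bound (any unbounded computable $p$ will do, by \cite{TZ}), take $p(n)=n$. The $A$-computable function $n \mapsto A\upharpoonright n$ (coded as a natural number) then has a computable trace $g$ with $|D_{g(n)}|\le n$, so the uniformly clopen sets $V_n=\{X: X\upharpoonright n\in D_{g(n)}\}$ satisfy $\mu(V_n)\le n\cdot 2^{-n}$ and $A\in V_n$ for every $n$. Thus $(V_n)_{n\in\omega}$ is a Solovay test capturing $A$, contradicting $1$-randomness. So the statement you set out to prove---that a $1$-random hyperimmune-free $A$ is computably traceable---is false, and the reduction to the preceding theorem is blocked.

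The specific breakdown in your sketch is the ``standard fix'' of the last paragraph. Hyperimmune-freeness yields, for each $e$ with $\Psi_e^A$ total, \emph{some} computable function dominating the use, but there is no single computable $u(e,n)$ doing this uniformly in $e$, and the resulting thresholds $p(e,n)$ cannot be consolidated into one computable $p$. Indeed, your per-functional argument only establishes that each $A$-computable function has a computable trace of \emph{some} computably bounded size, which is exactly hyperimmune-freeness restated and is strictly weaker than computable traceability. The survey itself gives no proof of this theorem, so there is nothing here to compare against line by line; but since computable traceability of $A$ is unavailable, the argument in \cite{ACDJL} must proceed differently, working directly with the single reduction $B=\Phi^A$ and its computable use bound and invoking $1$-randomness of $A$ to control the density of disagreement with a suitably constructed computable set, rather than attempting to trace all $A$-computable functions at once.
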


   In summary, we know the following.

\begin{itemize}

    \item  $\Gamma(\mathbf{0}) = 1$

    \item  If $\mathbf{a > 0}$, then $\Gamma(\mathbf{a}) \leq \frac{1}{2}$.

    \item  If $\mathbf{a}$ is hyperimmune or PA, then $\Gamma(\mathbf{a}) = 0$.

    \item  If $\mathbf{a}$ is computably traceable and nonzero, then $\Gamma(\mathbf{a}) = \frac{1}{2}$.

    \item If $\bf a$ is both $1$-random and hyperimmune-free, then
      $\Gamma(\mathbf{a}) = \frac{1}{2}$.

\end{itemize}

The following question was raised in \cite{ACDJL}.

\begin{qu}
What is the range of $\Gamma$?   Does it equal $\{0, \frac{1}{2}, 1\}$?
\end{qu}  

Benoit Monin \cite{M} has recently announced the remarkable result that $\Gamma(\mathbf{d})$ is equal to $0$, $1/2$ or $1$ for every degree $\mathbf{d}$.    Together with the results just above, this gives a positive answer to the second half of the above question, and thus a natural trichotomy
of the Turing degrees according to their $\Gamma$-values.   In contrast, Matthew Harrison-Trainor \cite{H} has just announced that the range of the analogue for $\Gamma$ for many-one degrees is $[0, 1/2] \cup \{1\}$.

  Benoit Monin and Andr\'e Nies \cite{MN} have also recently extended and unified some of the above
  results on $\Gamma$ using Schnorr randomness.   In particular they showed the existence of degrees $\mathbf{a}$ 
with $\Gamma(\mathbf{a}) = \frac{1}{2}$ which are neither computably traceable nor $1$-random.   They also gave a new proof of Liang Yu's unpublished result that there are degree $\mathbf{a}$ with $\Gamma(\mathbf{a}) = 0$ such that $\mathbf{a}$ is neither hyperimmune nor PA.

\section{Generic and coarse reducibility and their corresponding degrees.}
  
   One might first consider relative generic computability:  That is,  
 what sets are generically computable by
Turing machines with a full oracle for a set $A$?
  Say that a set $B$ is \emph{generically $A$-computable} if there is a generic computation of $B$  using  a \emph{full} oracle for $A$.
It is easy to see that this notion is not transitive because we start with full information but compute only partial information.   For example, let   $A = \emptyset $ and 
let $B = \{2^n: n \in C \}$ where   $C$ is  any set which is not generically computable.   Then $B$ is generically $A$-computable and $C$ is generically $B$-computable, but $C$ is not generically $A$-computable.
  The following is a remarkable and surprising result of Igusa \cite{Igusa1} showing  there are no minimal pairs for this non-transitive notion of relative generic computability.

\begin{thm}(\cite{Igusa1}, Theorem 2.1) For any noncomputable sets $A$ and $B$  there is a set $C$ which is not  generically computable but which is both generically $A$-computable and generically $B$-computable
\end{thm}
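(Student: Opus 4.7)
The plan is to construct $C$ by a priority-style argument that builds $C$ in stages together with a partial $A$-computable witness $\psi_A$ (with domain $U_A$ of density $1$) and a partial $B$-computable witness $\psi_B$ (with domain $U_B$ of density $1$), each agreeing with $C$ on its domain, while diagonalizing against every partial computable $\varphi_e$.

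For the diagonalization against $\varphi_e$: at the stage dealing with requirement $R_e$, I would search the enumeration of $\mathrm{dom}(\varphi_e)$ for a fresh, sufficiently large element $n_e$ (not yet committed, and chosen large enough that the collected witnesses $\{n_e : e \in \omega\}$ end up having density zero). When $\varphi_e(n_e)\downarrow = v$ is first observed, set $C(n_e) := 1-v$; this forces $\varphi_e$ to disagree with $C$ at $n_e$, so $\varphi_e$ does not generically compute $C$. If no such fresh element ever shows up then $\mathrm{dom}(\varphi_e)$ is not of density $1$ and $\varphi_e$ already fails. The witnesses are kept out of $U_A \cap U_B$, which is compatible with the density-$1$ requirements since the witnesses have density zero. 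Outside the witnesses, I would set $C(k)$ together with an $A$-computable $h_A$ and a $B$-computable $h_B$ so that $C(k) = A(h_A(k))$ on $U_A$ and $C(k) = B(h_B(k))$ on $U_B$; since $A$ and $B$ must each be infinite and coinfinite (noncomputability), $h_A$ and $h_B$ can be chosen to hit any prescribed bit at each position, so the consistency $A(h_A(k)) = B(h_B(k))$ on $U_A \cap U_B$ is enforceable.

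The principal obstacle is the minimal-pair case. If $U_A$ and $U_B$ were both taken to be computable density-$1$ sets and $\psi_A, \psi_B$ agreed with $C$ there, then $C \upharpoonright (U_A \cap U_B)$ would be a total function $\le_T A$ and $\le_T B$, hence computable when $A, B$ is a minimal pair; then $C$ would be generically computable via this computable function on the computable density-$1$ domain $U_A \cap U_B$, contradicting the goal. So the construction must use genuinely $A$-c.e.\ and $B$-c.e.\ domains, and it must further ensure that $U_A \cap U_B$ contains no c.e.\ density-$1$ subset---equivalently, that $\overline{U_A} \cup \overline{U_B}$ meets every c.e.\ density-$1$ set. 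I would interleave additional requirements $S_k$ asking that, for each c.e.\ index $k$ whose $W_k$ appears to have density $1$, a fresh large element of $W_k$ is excluded from $U_A$ (or $U_B$) via the $A$-c.e.\ (or $B$-c.e.) scheduling defining $\psi_A$ (or $\psi_B$). Arranging these exclusions in a genuinely non-c.e.\ manner is the delicate technical heart of the construction, and it is precisely where the noncomputability of $A$ and of $B$ is essentially used.
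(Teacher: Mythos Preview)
This survey does not prove Igusa's theorem; it merely states the result and cites \cite{Igusa1}. So there is no proof here to compare against, and the relevant comparison is with Igusa's original argument.

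Your proposal has a genuine gap precisely where you flag ``the delicate technical heart.''  The single-witness diagonalization needs $C(n_e)$ to be uncommitted, so $n_e \notin U_A \cup U_B$; but the $A$-machine producing $\psi_A$ must determine $U_A$ from oracle $A$ alone and cannot see the c.e.\ (generally noncomputable) stream of witnesses or anything that depends on $B$.  Pre-specifying a sparse computable witness set fails, since a density-$1$ domain can avoid any fixed density-$0$ set.  The $h_A, h_B$ device is circular: choosing $h_A(k)$ so that $A(h_A(k))$ equals a ``prescribed bit'' presupposes that the bit $C(k)$ is already available to the $A$-machine; if it is, the detour through $h_A$ is superfluous, and on $U_A \cap U_B$ you are back to the minimal-pair obstacle unchanged.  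Finally, forcing $U_A \cap U_B$ to contain no c.e.\ density-$1$ subset is neither achieved by one-point exclusions (removing one point from each $W_k$ does not lower its density) nor obviously relevant, since your $R_e$-diagonalization is already meant to kill every $\varphi_e$.  You have correctly located where the difficulty lies but have not supplied the mechanism that actually exploits the noncomputability of $A$ and $B$.

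Igusa's construction in \cite{Igusa1} does not use single-point witnesses.  Each requirement is handled on a positive-density block, and $C$ on that block is defined by a process that an $A$-oracle and, independently, a $B$-oracle can each follow to produce a correct partial description with density-$1$ domain, while any would-be computable generic description $\varphi_e$ is forced either to omit positive density from its domain on that block or to give a wrong value there.  The noncomputability of $A$ and $B$ is used concretely inside this block-level mechanism rather than via an unspecified scheduling of exclusions; the idea is genuinely different from your outline, and the original paper is worth consulting for the precise construction.
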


  Generic reducibility (denoted $\leq_{g}$) was introduced by Jockusch and Schupp \cite{JS} (Section 4), and we review the definition here.   A \emph{generic description} of a set $A$ is a partial function $\theta$ which agrees with $A$ on its domain and has a domain of density $1$.
Note that $A$ is generically computable if and only if $A$ has a partial computable generic description.   The basic idea is then that $B \leq_{g} A$ if and only if there is an effective procedure which, from any generic description of $A$, computes a generic description of $B$.    Since computing a partial function is tantamount to enumerating its graph, this is made precise using enumeration operators.   These are similar to Turing reductions but use only \emph{positive} oracle information and also output only positive information.   An \emph{enumeration operator} is a c.e.\ set $W$ of pairs $\langle n, D \rangle$ where $n \in \omega$ and $D$ is a finite subset of $\omega$.      (Here we identify finite sets with their canonical indices and pairs with their codes in saying that $W$ is c.e.   The membership of $\langle n, D \rangle$ in $W$ means intuitively that
from the positive information that $D$ is a subset of the oracle,  $W$ computes that $n$ belongs to the output.)    Hence if $W$ is an enumeration operator and $X \subseteq \omega$, define
$$W^X  :=  \{n : (\exists D)[\langle n, D \rangle \in W \ \& \ D \subseteq X]\}$$
Note that from any enumeration of $X$ one may effectively obtain an enumeration of $W^X$.  If $\theta$ is a partial function, let $\gamma(\theta) = \{\langle a, b \rangle : \theta(a) = b\}$, so $\gamma(\theta)$ is a set of natural numbers coding the graph of $\theta$.   We can now state our formal definition of  generic reducibility.   

\begin{defn} The set $B$ is \emph{generically reducible} to the set $A$ (written $B \leq_{g} A$) if there is a fixed enumeration operator $W$ such that, for every generic description
$\theta$ of $A$,   $W^{\gamma(\theta))} = \theta(\delta)$ for some generic description $\delta$ of $B$.
\end{defn}

This reducibility is also called ``uniform generic reducibility'' and denoted $\leq_{ug}$.   (There is  also a nonuniform version, $\leq_{ng}$, of generic reducibility which we do not consider  in this survey.)

It is easily seen that  $\leq_{g}$ is transitive since the maps induced by enumeration operators are closed under composition.      

\begin{defn}  The \emph{generic degree} of $A$ is $\{ B : B \leq_{g} A \ \& \ A \leq_{g} B\}$.
\end{defn}

  We have seen that the map $\widehat{\mathcal{R}}$ which sends the Turing degree of $A$ to the generic degree of
  $\mathcal{R}(A)$  embeds the Turing degrees into the generic degrees, since any generic algorithm for $\mathcal{R}(A)$ will compute $A$, and the proof of this is uniform.   The generic degrees have a least degree under the ordering induced by $\leq_g$, and this least degree consists of the generically computable sets.

Define $B$ to be \emph{enumeration reducible} to $A$ (written $B \leq_e A$) if there is an enumeration operator $W$ such that $W^A = B$.   

Enumeration reducibility leads analogously to the \emph{enumeration degrees}, i.e. equivalence
classes under the equivalence relation $A \leq_e B$ and $B \leq_e A$.   The Turing degrees can be embedded in the enumeration degrees by the map  which takes the Turing degree of $A$ to the enumeration degree of $A \oplus \overline{A}$.   
   An enumeration degree $\bf a$ is called \emph{quasi-minimal} if it is nonzero and no nonzero enumeration degree $\mathbf{b \leq a}$ is in the range of this embedding.    The following definition is analogous: 

\begin{defn}(\cite{Igusa2} A  generic degree $\boldsymbol{a}$ is \emph{quasi-minimal}
if it is nonzero and no nonzero generic degree $\mathbf{b \leq a}$  is in the range of the embedding $\widehat{\mathcal{R}}$ of the Turing
degrees into the generic degrees defined above.
\end{defn}

The following result gives a connection between quasi-minimality for enumeration degrees and
generic degrees. 

\begin{lem}(\cite{JS}, Lemma 4.9) If $A$ is a set of density $1$ which is not generically computable and the enumeration degree of $A$ is quasi-minimal, then the generic degree of $A$ is also quasi-minimal.
\end{lem}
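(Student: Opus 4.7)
The plan is to show that any generic degree below that of $A$ which lies in the range of the embedding $\widehat{\mathcal{R}}$ must be the bottom generic degree. Equivalently, assuming $\mathcal{R}(B) \leq_{g} A$ for some set $B$, I aim to deduce that $B$ is computable, so that $\mathcal{R}(B)$ is generically computable by the earlier proposition characterising when a set of the form $\mathcal{R}(B)$ is generically computable. Combined with the hypothesis that $A$ itself is not generically computable (so its generic degree is nonzero), this will give the quasi-minimality of the generic degree of $A$.

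The key trick is to exploit that $A$ has density $1$ by introducing the tautological generic description $\theta_A$ of $A$, defined by $\theta_A(n) = 1$ for $n \in A$ and undefined otherwise. Its domain is $A$, which has density $1$, so $\theta_A$ is a bona fide generic description of $A$, and its graph $\gamma(\theta_A) = \{\langle n, 1\rangle : n \in A\}$ is enumeration-equivalent to $A$. Applying the enumeration operator $W$ witnessing $\mathcal{R}(B) \leq_g A$ to $\gamma(\theta_A)$ yields $\gamma(\delta)$ for some generic description $\delta$ of $\mathcal{R}(B)$; in particular, $\gamma(\delta) \leq_e A$.

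The heart of the argument is then the density observation that for every $n$, the set $R_n \cap \mathrm{dom}(\delta)$ is infinite: $R_n$ has positive density $2^{-(n+1)}$, while the complement of $\mathrm{dom}(\delta)$ has density $0$. For any $m$ in this intersection we have $\delta(m) = \mathcal{R}(B)(m) = B(n)$, since $m \in R_n$. Hence I can enumerate $B$ by listing those $n$ for which some $\langle m, 1\rangle$ with $m \in R_n$ appears in $\gamma(\delta)$, and dually enumerate $\overline{B}$ using pairs $\langle m, 0\rangle$. Because $\gamma(\delta) \leq_e A$, this shows $B \oplus \overline{B} \leq_e A$.

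Finally, the enumeration degree of $B \oplus \overline{B}$ always lies in the range of the embedding of the Turing degrees into the enumeration degrees, so the quasi-minimality hypothesis on the enumeration degree of $A$ forces the enumeration degree of $B \oplus \overline{B}$ to be zero; that is, $B \oplus \overline{B}$ is c.e., so both $B$ and $\overline{B}$ are c.e., and $B$ is computable. The only real obstacle is bridging from a partial generic description of $A$ to an actual enumeration encoding $B$, and the density-$1$ hypothesis on $A$ is exactly what makes $\theta_A$ essentially an enumeration of $A$, while the positive density of each $R_n$ guarantees that the generic description $\delta$ of $\mathcal{R}(B)$ produced by $W$ still decides $B(n)$ on a rich enough set of witnesses for every $n$.
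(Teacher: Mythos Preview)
Your argument is correct and is precisely the natural proof of this lemma. The survey itself does not reproduce a proof (it merely cites \cite{JS}, Lemma 4.9), but your approach---using that density $1$ makes the partial function $\theta_A$ with domain $A$ and constant value $1$ a generic description whose graph is enumeration-equivalent to $A$, then reading off $B \oplus \overline{B} \leq_e A$ from the resulting generic description of $\mathcal{R}(B)$ via the positive density of each $R_n$---is exactly the intended one.
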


It is shown in the proof of Theorem 4.8 of \cite{JS} that there is a set $A$ which meets the hypotheses of the lemma.    It follows that there exist quasi-minimal generic degrees which contain sets of density $1$.

 It is therefore natural to consider generic degrees which are \emph{density}-$1$,
that is, generic degrees which contain a set of density $1$ (\cite{Igusa2}).

   A \emph{hyperarithmetical} set is a set computable from any set that can be obtained by iterating the jump operator through the computable ordinals. The class of such sets coincides with the class of $\Delta^1_1$ sets,
which are those sets which can be defined by a prenex formula of second-order arithmetic with all set quantifiers universal and also by a prenex formula with all set quantifiers existential.  Igusa \cite{Igusa2} proves the following striking characterization.

\begin{thm} (\cite{Igusa2}, Theorem 2.15) A set $A$ is hyperarithmetical if and only if there is a density-$1$ set $B$ such that $\mathcal{R}(A) \leq_{g} B$.
\end{thm}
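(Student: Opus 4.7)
Plan: Prove the two directions separately. The forward direction is a construction; the backward direction is a definability analysis.

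For the forward direction ($A$ hyperarithmetical implies the existence of such a $B$), I would exploit that every hyperarithmetical $A$ admits a computable $\Delta^1_1$-presentation, for instance via a well-founded computable tree whose transfinite rank-computation determines $A$. The idea is to build $B$ of density $1$ by distributing a redundant encoding of the tree data for this presentation across $\omega$, so that any generic description of $B$ (having density-$1$ domain) captures enough of the encoded data. An enumeration operator $W$ would then uniformly run the transfinite decoding procedure on the observed data to enumerate a generic description of $\mathcal{R}(A)$. Density $1$ of $B$ is ensured by arranging the encoded bits to form a density-$0$ subset in aggregate, while the decoding remains robust because, for each $n$, positive-density pieces of $\omega$ meeting $R_n$ must contain enough encoded data to determine $A(n)$ effectively.

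For the backward direction (existence of $B$ implies $A$ hyperarithmetical), suppose $\mathcal{R}(A) \leq_g B$ via enumeration operator $W$, with $\rho(B) = 1$. Since $B$ has density $1$, the total function $\chi_B$ is itself a valid generic description of $B$, so $W^{\gamma(\chi_B)}$ is the graph of a generic description $\delta$ of $\mathcal{R}(A)$, and $\delta$ is $\Sigma^0_1(B)$. From any generic description of $\mathcal{R}(A)$ one computes $A$ uniformly by searching each $R_n$ for a defined value; hence $A \leq_T B$. To upgrade this to $A \in \Delta^1_1$ absolutely, I would exploit that $W$ must succeed for every generic description of $B$: for every density-$1$ set $S$, the function $\chi_B \upharpoonright S$ is a generic description of $B$, so $W^{\gamma(\chi_B \upharpoonright S)}$ must be a valid partial description of $\mathcal{R}(A)$. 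Since the class of density-$1$ subsets of $\omega$ is arithmetically definable, universal quantification over $S$ together with existence of witnesses $B$ and $W$ yields a $\Pi^1_1$ definition of $A$; a symmetric argument yields a $\Sigma^1_1$ definition, whence $A \in \Delta^1_1$.

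The main obstacle is eliminating the parameter $B$ in the backward direction: the naive definability analysis yields only $A \in \Delta^1_1(B)$, and some care is needed to convert existential quantification over a density-$1$ $B$ into a $\Sigma^1_1$ statement about $A$ alone. I expect this step will require a Gandy-basis-style argument showing that whenever some density-$1$ $B$ works, a hyperarithmetical density-$1$ $B'$ already works, or a direct absoluteness argument using the density-$1$ hypothesis to constrain the complexity of the reduction, and thereby of $A$, to be absolute hyperarithmetic rather than merely hyperarithmetic in $B$.
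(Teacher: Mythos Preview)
The survey paper does not contain a proof of this theorem; it merely cites it from Igusa's paper \cite{Igusa2} and moves on. So there is no ``paper's own proof'' against which to compare your proposal. I will therefore comment on the proposal itself.

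Your backward-direction sketch correctly isolates the real difficulty---eliminating the parameter $B$---but does not resolve it, and the Gandy-basis suggestion does not obviously work. The class of $B$ you would need to search over is defined by the condition ``$B$ has density $1$ and $W$ carries every generic description of $B$ to a generic description of $\mathcal{R}(A)$,'' which already has a universal second-order quantifier (over generic descriptions $\theta$) and mentions $A$ itself; so it is not a $\Sigma^1_1$ class in $B$ alone, and you cannot simply invoke a basis theorem to replace $B$ by a hyperarithmetic witness. Your alternative of writing down matching $\Sigma^1_1$ and $\Pi^1_1$ definitions of $A$ runs into the same circularity: the natural definitions quantify over generic descriptions of $\mathcal{R}(A)$, which presupposes $A$. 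Igusa's actual argument exploits the density-$1$ hypothesis more directly---because $\rho(B)=1$, the restriction of the constant-$1$ function to any density-$1$ subset of $B$ is a generic description of $B$, so the behavior of $W$ on ``all-$1$'' oracles already determines $A$, and one can then give a $\Pi^1_1$ definition of $n\in A$ that quantifies only over density-$1$ sets (a $\Pi^0_3$ class) rather than over $B$. Your sketch does not reach this idea.

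Your forward-direction sketch is too vague to count as a proof. Saying ``encode a well-founded computable tree redundantly into a density-$1$ set'' gestures at the right kind of construction, but the substance is in specifying the encoding so that \emph{every} generic description of $B$ (not just $\chi_B$) yields enough data for a \emph{uniform} enumeration operator to recover a generic description of $\mathcal{R}(A)$; this requires a careful transfinite construction through the hyperarithmetic hierarchy, which you have not supplied.
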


  Cholak and Igusa \cite{CI} consider the question of whether or not every non-zero generic degree bounds a non-zero density-$1$ generic degree.
By the results of \cite{Igusa2} a positive answer would show that there are no minimal generic degrees and a negative
answer would show that there are minimal pairs in the generic degrees.   However, it is not yet known  whether or not
there are minimal degrees or minimal pairs in the generic degrees.

Recall that a  \emph{coarse description} of a set $A$ is a set $C$ which agrees with $A$ on a set of density $1$.
 Hirschfeldt, Jockusch, Kuyper and Schupp \cite{HJKS} introduced  both uniform and nonuniform versions of coarse reducibility
and their corresponding degrees.
  
\begin{defn}(\cite{HJKS}, Definition 2.1) A set $A$ is \emph{uniformly coarsely reducible} to a set $B$, written $A \le_{uc} B$,
if there is a fixed oracle Turing machine $M$ which, given any coarse description of $B$ as an oracle, computes a coarse description of $A$.   A set $A$ is \emph{nonuniformly  coarsely reducible} to a set $B$, written $A \le_{nc} B$ if every  coarse description of $B$ computes a coarse description of $A$.
\end{defn}
 
  These coarse reducibilities induce respective equivalence relations $\equiv_{uc}$ and $\equiv_{nc}$.

\begin{defn}(\cite{HJKS})   The \emph{uniform coarse degree} of $A$ is $\{ B : B \equiv_{uc} A \}$ and
the \emph{nonuniform coarse degree} of $A$ is $\{ B : B \equiv_{nc} A \}$.
\end{defn}
  
   We can embed the Turing degrees into both the nonuniform and the uniform coarse degrees.
   We have already seen that the function $\mathcal{I}$ induces an embedding of the Turing degrees into 
the nonuniform coarse degrees since 
$\mathcal{I}(A) \le_T A$ and each coarse description of $\mathcal{I}(A)$ computes $A$, but the adjustments
needed to compute $A$ depend on the coarse description used.

To construct an embedding of the Turing degrees into the uniform coarse degrees we need more redundancy.  The following map is slightly 
different from but equivalent to the map used in \cite{HJKS}, Proposition 2.3.

\begin{prop}\label{Euniform}(\cite{HJKS}).
Define $\mathcal{E}(A)=\mathcal{I}(\mathcal{R}(A))$.
The function $\mathcal{E}$ induces an embedding of the Turing degrees into the uniform coarse degrees.
\end{prop}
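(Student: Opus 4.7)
The statement reduces to verifying the biconditional $A \le_T B \Longleftrightarrow \mathcal{E}(A) \le_{uc} \mathcal{E}(B)$; both well-definedness of the induced map on Turing degrees and its order-embedding property follow.

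For ($\Leftarrow$), suppose $M$ is a fixed oracle machine witnessing $\mathcal{E}(A) \le_{uc} \mathcal{E}(B)$. Since $\mathcal{E}(B) = \mathcal{I}(\mathcal{R}(B)) \le_T B$, the exact set $\mathcal{E}(B)$ is a $B$-computable coarse description of itself, so $M^{\mathcal{E}(B)}$ is a $B$-computable coarse description of $\mathcal{E}(A) = \mathcal{I}(\mathcal{R}(A))$. Theorem~\ref{maj}, relativized to $B$, then implies $\mathcal{R}(A) \le_T B$, and combined with $\mathcal{R}(A) \equiv_T A$ this yields $A \le_T B$.

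For ($\Rightarrow$), fix a Turing reduction $\Phi$ with $A = \Phi^B$ and computable use $u$. I plan to define a single oracle machine $\Psi$ as follows. Given oracle $C$ and input $k$, $\Psi$ locates the unique $m$ with $k \in I_m$, sets $n := \nu_2(m)$, and reconstructs $B \upharpoonright (u(n)+1)$ from $C$ by a two-stage majority vote: first form $D(m') := [|I_{m'} \cap C| > |I_{m'}|/2]$, and second, for each $j \le u(n)$, let $\widehat{B}(j)$ be the majority of $\{D(m') : m' \in R_j \cap [1, L(k,n)]\}$, with $L(k,n) := (k+1) \cdot 2^{u(n)+3}$. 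Finally $\Psi$ simulates $\Phi^{\widehat{B}}(n)$ and outputs the result. By Theorem~\ref{maj} the set $F := \{m' : D(m') \ne \mathcal{R}(B)(m')\}$ is finite; and since $|R_j \cap [1, L(k,n)]|/2 \ge L(k,n)/2^{j+3} \ge k$ for every $j \le u(n)$, the inner majority equals $B(j)$ whenever $k > |F|$. Thus $\widehat{B}$ matches $B$ on the use of $\Phi^B(n)$, so $\Phi^{\widehat{B}}(n) = A(n) = \mathcal{E}(A)(k)$ for all $k > |F|$, and $\Psi^C$ agrees with $\mathcal{E}(A)$ on a cofinite (hence density-$1$) set.

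The principal obstacle I anticipate is guaranteeing totality of $\Psi^C$: on the finitely many bad $k \le |F|$, $\widehat{B}$ may disagree with $B$ within the use and the simulation of $\Phi^{\widehat{B}}(n)$ could diverge. One handles this by running the simulation under a sufficiently fast-growing computable clock and outputting a default bit on timeout; since any such modifications remain confined to the finite bad set, they do not disturb the density-zero bound already established, and $\Psi$ witnesses $\mathcal{E}(A) \le_{uc} \mathcal{E}(B)$.
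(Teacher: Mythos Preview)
The paper does not prove this proposition here, deferring to \cite{HJKS}. Your $(\Leftarrow)$ direction is correct, and your two-layer majority-vote strategy for $(\Rightarrow)$ is the right idea, but the totality argument has a genuine gap. You claim that clocking the simulation confines all modifications to the finite set $\{k \le |F|\}$; this would require your computable clock to exceed the running time $T_{n(k)}$ of $\Phi^B(n(k))$ for every $k > |F|$. Since $n(k) = \nu_2(m(k))$ equals $j$ throughout the block $I_{2^j}$, and the least element of $I_{2^j}$ is only $(2^j)!$, you need a computable $g$ with $g\bigl((2^j)!\bigr) \ge T_j$ eventually, i.e.\ the function $T$ must be dominated by a computable function. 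That fails whenever $A \not\le_{wtt} B$ (and note that your standing hypothesis of a \emph{computable} use $u$ already presupposes $A \le_{wtt} B$, not merely $A \le_T B$); for any computable clock the timeout set then contains all of $I_{2^j}$ for infinitely many $j$, giving it upper density $1$, not $0$.

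The repair is to drop both the use bound and the clock. From the first majority layer $D$ (which differs from $\mathcal{R}(B)$ only on a finite set $F$), define for each $k$ the \emph{total} oracle $\widehat B_k(j) := D\bigl((2k{+}1)\cdot 2^j\bigr)$, the value of $D$ at the $(k{+}1)$-st element of $R_j$; once $2k{+}1 > \max F$ one has $\widehat B_k = B$ exactly, for every $j$. On input $k$, dovetail the computations $\Phi^{\widehat B_{k'}}(n(k))$ over all $k' \ge k$ and output the first convergent value: this is total (since large $k'$ give the true oracle $B$ and hence convergence), requires no knowledge of $u$ or $T$, and is correct for all $k$ beyond a fixed finite threshold depending only on $C$.
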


 Recall that a set $X$ is
\emph{autoreducible} if there exists a Turing functional $\Phi$ such
that for every $n \in \omega$ we have $\Phi^{X \setminus \{n\}}(n) =
X(n)$. Equivalently, we could require that $\Phi$ not ask whether its
input belongs to its oracle.   Figueira, Miller and Nies \cite{FMN} showed that
no 1-random set is autoreducible and it is not difficult to  show that no
$1$-generic set is autoreducible.

 Dzhafarov and Igusa \cite{DI} study various notions of ``robust information coding''
and introduced uniform ``mod-finite'', ``co-finite'' and ``use-bounded from below''  reducibilities.
Using the relationships between these  reducibilities and  generic and coarse reducibility, Igusa proved  the following result. 

\begin{thm}(see \cite{HJKS}, Theorem 2.7) If $\mathcal{E}(X) \le_{uc} \mathcal{I}(X)$ then $X$ is autoreducible.
Therefore if  $A$ is 1-random or $1$-generic then
$\mathcal{E}(X) \leq_{nc} \mathcal{I}(X)$ but $\mathcal{E}(X) \nleq_{uc} \mathcal{I}(X)$.
\end{thm}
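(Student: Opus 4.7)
The plan is to construct an autoreduction of $X$ from the hypothesis that a fixed oracle machine $\Phi$ uniformly computes, from any coarse description of $\mathcal{I}(X)$, a coarse description of $\mathcal{E}(X) = \mathcal{I}(\mathcal{R}(X))$. Given $n$ and the oracle $X \setminus \{n\}$, I would first form the set $Y = \mathcal{I}(X \setminus \{n\})$, which is computable from the oracle. Since $Y$ differs from $\mathcal{I}(X)$ only (possibly) on $I_n$, and $I_n$ has density $0$ in $\omega$, $Y$ is a coarse description of $\mathcal{I}(X)$ regardless of the value of $X(n)$. Hence $Z := \Phi^Y$, also computable from the oracle, is a coarse description of $\mathcal{E}(X)$.

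Next, I would apply the majority-vote procedure from the proof of Theorem \ref{maj} interval by interval: define $D(m) = 1$ iff more than half of $I_m$ lies in $Z$. The same argument shows that $D \bigtriangleup \mathcal{R}(X)$ is finite, since otherwise $\overline{\rho}(Z \bigtriangleup \mathcal{E}(X)) \geq 1/2$, contradicting that $Z$ coarsely describes $\mathcal{E}(X)$. Because $\mathcal{R}(X) \cap R_n$ equals $R_n$ when $n \in X$ and is empty when $n \notin X$, the sequence $D(m)$ for $m \in R_n$ is eventually constant with value $X(n)$.

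The main obstacle is producing a halting procedure that identifies this eventual value, and this is where I expect to invoke the ``robust information coding'' reducibilities of Dzhafarov and Igusa \cite{DI}. My plan is to use the oracle $X \setminus \{n\}$ as a calibrator: it lets us compute $\mathcal{R}(X) \setminus R_n = \mathcal{R}(X \setminus \{n\})$ exactly and thus monitor the disagreements between $D$ and $\mathcal{R}(X)$ on $\omega \setminus R_n$. Combining this with a use-bound on $\Phi$ extracted from its uniformity should force an effective bound on the finite set $D \bigtriangleup \mathcal{R}(X)$, and thereby allow us to identify some $m \in R_n$ large enough to guarantee $D(m) = X(n)$. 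The final claim about $1$-random and $1$-generic sets then follows directly: if $X$ is $1$-random or $1$-generic then $X$ is not autoreducible (a known property of such sets, referenced in the excerpt's remark that $1$-random and $1$-generic sets are not autoreducible), so the contrapositive yields $\mathcal{E}(X) \nleq_{uc} \mathcal{I}(X)$, while $\mathcal{E}(X) \leq_{nc} \mathcal{I}(X)$ already holds by the nonuniform coarse reducibility argument alluded to earlier.
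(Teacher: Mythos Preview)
The paper itself does not prove this theorem; it only states the result, attributes it to Igusa (appearing in \cite{HJKS}), and indicates that the proof proceeds via the Dzhafarov--Igusa framework of ``mod-finite,'' ``cofinite,'' and ``use-bounded-from-below'' reducibilities introduced in \cite{DI}.  So there is no detailed argument to compare against---only this hint.

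Your setup is exactly right and matches that hint: feeding $\mathcal{I}(X\setminus\{n\})$ to $\Phi$, then applying majority vote to obtain $D =^* \mathcal{R}(X)$, correctly reduces the problem to reading off $X(n)$ as the eventual value of $D$ on $R_n$.  You also correctly locate the real difficulty and correctly point to \cite{DI} as the place where it gets resolved.

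Where your proposal falls short is the mechanism you sketch for overcoming that difficulty.  Knowing $\mathcal{R}(X)$ exactly on $\omega\setminus R_n$ lets you observe disagreements with $D$ there, but you cannot recognize when you have seen the last one, so this ``calibration'' alone does not bound the finite set $D \bigtriangleup \mathcal{R}(X)$.  And use bounds on $\Phi$ only tell you which oracle bits determine a given output bit of $Z$; they say nothing about where $D$ stops disagreeing with $\mathcal{R}(X)$, because that bound comes from the density (majority-vote) argument, not from the computation.  As written, this step is a genuine gap.  The actual argument (per the paper's hint) factors the reduction through the \cite{DI} reducibilities---roughly, the passage $X' \mapsto \mathcal{I}(X')$ and the majority-vote inverse set up a uniform mod-finite reduction from $\mathcal{R}(X)$ to $X$, and one then needs the specific implications among mod-finite, cofinite, and use-bounded-from-below reducibilities established in \cite{DI} to extract a halting procedure for $X(n)$ from $X\setminus\{n\}$.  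Your final paragraph on the $1$-random/$1$-generic corollary is fine once the main implication is in hand.
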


  There are striking connections between coarse degrees and algorithmic randomness.
The paper \cite{HJKS} shows the following.

\begin{thm}(\cite{HJKS}, Corollary 3.3) If $X$ is weakly $2$-random then $\mathcal{E}(A) \nleq_{nc} X$
for every noncomputable set $A$, so the degree of $X$ is quasi-minimal (in the obvious sense) in both   the uniform and nonuniform coarse degrees.
\end{thm}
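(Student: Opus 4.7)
The plan is to reduce the theorem to a Turing-reduction question by peeling off the two layers of the encoding $\mathcal{E}(A) = \mathcal{I}(\mathcal{R}(A))$ via majority vote, and then to invoke the Hirschfeldt--Miller characterization of weakly $2$-random sets as the Martin--L\"of random sets that form a minimal pair with $\mathbf{0'}$ in the Turing degrees.

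First I would exploit that $X$ is trivially a coarse description of itself. The assumption $\mathcal{E}(A) \le_{nc} X$ therefore supplies a Turing functional $\Phi$ such that $D := \Phi^X$ is total and $\rho(D \bigtriangleup \mathcal{E}(A)) = 0$. Since $\mathcal{E}(A) = \mathcal{I}(\mathcal{R}(A))$ is constant on each interval $I_n$ and $|I_n|/(n+1)! \to 1$, the majority-vote calculation from the proof of Theorem \ref{maj} carries over verbatim with $\mathcal{R}(A)$ playing the role of the set $A$ there: the set $B = \{n : |D \cap I_n| > |I_n|/2\}$ is computable from $X$, and if $B \bigtriangleup \mathcal{R}(A)$ were infinite then $\overline{\rho}(D \bigtriangleup \mathcal{E}(A)) \ge 1/2$, contradicting the density-zero conclusion. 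Hence $B \bigtriangleup \mathcal{R}(A)$ is finite, and since $\mathcal{R}(A) \equiv_T A$ we obtain $A \le_T X$.

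To close the argument I would use the Hirschfeldt--Miller characterization recalled above: it suffices to establish $A \le_T \emptyset'$, for then $A \le_T X$ and the minimal-pair property force $A$ computable, contradicting our assumption that $A$ is noncomputable.

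The main obstacle is exactly the step $A \le_T \emptyset'$; the preceding argument only yields $A \le_T X$, which for an arbitrary weakly $2$-random set is of course not in itself a contradiction. My tentative plan here is effective-measure-theoretic. Consider the class $\mathcal{W} = \{Y : \Phi^Y \text{ is total and } \rho(\Phi^Y \bigtriangleup \mathcal{E}(A)) = 0\}$, which contains $X$. In the favourable case $\mu(\mathcal{W}) > 0$, the probabilities $p_n = \Pr_Y[|\Phi^Y \cap I_n| > |I_n|/2 \mid Y \in \mathcal{W}]$ are $\Delta^0_2$ reals (computable from $\emptyset'$), and by running the majority-vote analysis almost surely in $Y \in \mathcal{W}$ they must converge to $\mathcal{R}(A)(n) \in \{0,1\}$; thresholding with $\emptyset'$ then decides $\mathcal{R}(A)$ on cofinitely many inputs, giving $A \le_T \emptyset'$. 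Ruling out the contingency $\mu(\mathcal{W}) = 0$ is the delicate remaining task: one needs to witness $X$ inside a sufficiently low-complexity (ideally arithmetically $\Pi^0_2$) null class in that case, contradicting the weak $2$-randomness of $X$. Carrying this out will likely require recasting the density-zero condition through an effective modulus of convergence and exploiting the freedom afforded by the nonuniform reducibility to choose the witnessing operator across different coarse descriptions of $X$; I expect this to be the heart of the proof.
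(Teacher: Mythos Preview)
The survey paper does not actually include a proof of this theorem; it is quoted from \cite{HJKS} without argument.  So I am assessing your proposal on its own merits and against what the argument in \cite{HJKS} is known to be.

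Your overall architecture---get $A \le_T X$ by majority vote, get $A \le_T \emptyset'$ by a separate argument, then invoke the Hirschfeldt--Miller characterization of weak $2$-randomness as Martin--L\"of randomness plus forming a minimal pair with $\emptyset'$---is exactly the shape of the argument in \cite{HJKS}.  The first step is fine.  The problem is entirely in the second step, and it is more serious than you indicate.

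Your class $\mathcal W = \{Y : \Phi^Y \text{ total and } \rho(\Phi^Y \bigtriangleup \mathcal E(A)) = 0\}$ is defined using $\mathcal E(A)$, so it is only $\Pi^0_2(A)$, not $\Pi^0_2$.  This breaks \emph{both} branches of your dichotomy, not just the null case you flag.  In the positive-measure branch you assert that the conditional probabilities $p_n$ are $\Delta^0_2$; but computing $\mu(\mathcal W)$ or $\mu(\{Y \in \mathcal W : \text{majority on } I_n \text{ is } 1\})$ already requires access to $A$, so the argument that ``thresholding with $\emptyset'$ decides $\mathcal R(A)$'' is circular.  In the null branch, as you note, $\mathcal W$ being null contradicts only weak $2$-randomness \emph{relative to $A$}, which you do not have.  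Replacing $\mathcal W$ by the $A$-free class $\mathcal P = \{Y : \Phi^Y \text{ total}\}$ removes the circularity but loses the link to $\mathcal R(A)$, since nothing forces the majority vote of $\Phi^Y$ to track $\mathcal R(A)$ for generic $Y \in \mathcal P$.

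You are also not using the full strength of the hypothesis: $\mathcal E(A) \le_{nc} X$ says that \emph{every} coarse description of $X$ computes a coarse description of $\mathcal E(A)$, hence computes $A$; you use only the single coarse description $X$ itself.  The argument in \cite{HJKS} exploits this by producing, from the randomness of $X$, enough coarse descriptions of $X$ (for instance by altering $X$ on an infinite computable set of density $0$ and invoking van Lambalgen--type independence of the erased bits) to force $A$ to be $K$-trivial; this is the content of the theorem of which the statement you are proving is Corollary~3.3, and it is what the Nies note \cite{N2} in the bibliography is about.  $K$-triviality then gives $A \le_T \emptyset'$ (indeed $A$ is low), and the minimal-pair step finishes as you say.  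The measure-theoretic heuristic you sketch is in the right spirit, but the actual mechanism for extracting $A \le_T \emptyset'$ passes through $K$-triviality rather than through a direct computation of probabilities.
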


For the uniform coarse degrees, this result was strengthened by independently motivated work by Cholak and Igusa \cite{CI} .

\begin{thm} (\cite{CI})  If $A$ is either $1$-random or $1$-generic, then the degree of $A$ is
quasiminimal in the uniform coarse degrees.
\end{thm}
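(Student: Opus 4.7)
The plan is to show that if $\mathcal{E}(X) \leq_{uc} A$ via a Turing functional $\Phi$, with $A$ being $1$-random or $1$-generic, then $X$ must be computable. This suffices for quasi-minimality, because a $1$-random or $1$-generic $A$ is easily seen not to be coarsely computable: for any computable $C$, the set $A \bigtriangleup C$ is again $1$-random (respectively $1$-generic), and so has density $1/2$ (respectively upper density $1$) rather than density $0$.

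Fix a Turing functional $\Phi$ such that $\Phi^C$ is a coarse description of $\mathcal{E}(X)$ for every coarse description $C$ of $A$, and assume toward a contradiction that $X$ is noncomputable. The first step is a \emph{uniform majority-vote principle}: $\mathcal{E}(X) = \mathcal{I}(\mathcal{R}(X))$ is constant on each block $I_m$ with value $X(n)$ when $m \in R_n$, and the calculation in the proof of Theorem~\ref{maj} shows that any density-zero $F \subseteq \omega$ satisfies $|F \cap I_m| \leq |I_m|/2$ for cofinitely many $m$. Applying this to $F = \Phi^C \bigtriangleup \mathcal{E}(X)$, we conclude that for \emph{every} coarse description $C$ of $A$, the majority value of $\Phi^C$ on $I_m$ equals $X(n)$ for all but finitely many $m \in R_n$.

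For each $n$, each $v \in \{0,1\}$, and each $N \in \omega$, let $T_n^{v,N}$ be the c.e.\ set of strings $\sigma$ such that, for some $m \in R_n$ with $m > N$, $\Phi^\sigma$ converges on all of $I_m$ with majority $v$. In the $1$-generic case, the principle above yields: if $v = X(n)$ then $T_n^{v,N}$ is dense above every initial segment of $A$ for every $N$, and by $1$-genericity $A$ meets it; whereas if $v = 1 - X(n)$ then for some large $N$, some initial segment of $A$ has no extensions in $T_n^{v,N}$. I would exploit this asymmetry, together with the assumed noncomputability of $X$, to construct a single dense c.e.\ set of strings whose forcing against $A$ is incompatible with the $1$-genericity of $A$. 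In parallel, the $1$-random case would be handled by a Solovay-test construction that captures those oracles $B$ for which $\Phi^B$ fails to be a coarse description of $\mathcal{E}(X)$, and shows that $A$ must be covered, contradicting $1$-randomness.

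The main obstacle is the passage from the uniform majority-vote principle (which directly gives only $X \leq_T A$) to computability of $X$. The extra leverage must come from the fact that $\Phi$ works uniformly across \emph{all} coarse descriptions of $A$, which restricts how information about $X(n)$ may be encoded in the oracle. A natural route is to reverse-engineer the autoreducibility argument of Theorem~2.7 of \cite{HJKS}: if $X$ were noncomputable, one would hope to convert $\Phi$ into an autoreduction of $A$ itself, contradicting the non-autoreducibility of $1$-random and $1$-generic sets (the former by \cite{FMN}). Making this conversion precise---translating uniformity across coarse descriptions of $A$ into a self-computation of $A$ avoiding a single bit---appears to be the technical heart of the argument.
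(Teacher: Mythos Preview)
The paper does not prove this theorem; it is stated as a result of Cholak and Igusa \cite{CI} without proof, so there is no in-paper argument to compare against.

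Your proposal is not a proof either: you correctly reduce the problem to showing that $\mathcal{E}(X) \leq_{uc} A$ forces $X$ to be computable, and your majority-vote step is sound, but everything after that is a statement of intent with the key step explicitly left open. Two concrete problems. First, your Solovay-test sketch for the $1$-random case is circular as written: the test you describe (covering oracles $B$ for which $\Phi^B$ fails to coarsely describe $\mathcal{E}(X)$) is defined in terms of $X$, which is precisely the set whose computability is at issue, so you cannot enumerate it effectively. Second, the autoreducibility route does not transfer in any evident way: the argument behind the cited theorem of \cite{HJKS} works because the oracle there is $\mathcal{I}(X)$, whose block structure lets a single flipped bit be recovered from the redundancy built into $\mathcal{E}(X)$; an arbitrary $1$-random or $1$-generic $A$ has no such structure, and you give no mechanism by which $\Phi$ would yield an autoreduction of $A$. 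What your majority-vote argument actually delivers is that a single functional computes $\mathcal{R}(X)$ up to finite error from every finite variant of $A$ --- a reduction of the ``mod-finite'' type studied in \cite{DI} --- but the substantive work, namely showing that $1$-random and $1$-generic sets admit no such nontrivial reduction, is exactly where your sketch stops.
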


\begin{thm}(\cite{HJKS}, Corollary 5.3) If $Y$ is not coarsely computable and $X$ is weakly $3$-random relative to $Y$, then their  nonuniform coarse degrees form a minimal pair for both uniform and nonuniform coarse reducibility. 
\end{thm}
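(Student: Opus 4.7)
My plan is to establish the minimal pair property for $\leq_{nc}$; since $\leq_{uc}$ implies $\leq_{nc}$, this will automatically yield the minimal pair property for $\leq_{uc}$ as well. So I would assume $A \leq_{nc} X$ and $A \leq_{nc} Y$ with $A$ not coarsely computable, and aim for a contradiction.

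First, I would apply the two nonuniform coarse reductions to the trivial coarse descriptions of $Y$ by itself and $X$ by itself. This yields a set $D \leq_T Y$ which is a coarse description of $A$, and a Turing functional $\Phi$ such that $\Phi^X$ is total and is also a coarse description of $A$. In particular $\Phi^X \sim_c D$, and since $A \sim_c D$ and $A$ is not coarsely computable, neither is $D$.

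Next I would analyze the class
\[ U = \{Z \in 2^\omega : \Phi^Z \text{ is total and } \rho(\Phi^Z \bigtriangleup D) = 0\}. \]
Totality of $\Phi^Z$ is $\Pi^0_2$ in $Z$, and unpacking the density condition as $\forall k\,\exists N\,\forall n \geq N\,[\rho_n(\Phi^Z \bigtriangleup D) < 1/k]$ shows that $U$ is $\Pi^0_3$ relative to $D$, hence $\Pi^0_3$ relative to $Y$. By construction $X \in U$. Since $X$ is weakly $3$-random relative to $Y$ (so avoids every $\Pi^0_3(Y)$ null class), $U$ must have positive Lebesgue measure.

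The essential and most delicate step is to prove the key lemma that positivity of $\mu(U)$ forces $D$ to be coarsely computable; this contradicts the previous paragraph and finishes the proof. My approach to this lemma is an effective Lebesgue density argument: for any $\epsilon > 0$ one can choose a string $\sigma$ with $\mu(U \cap [\sigma]) > (1 - \epsilon)\cdot 2^{-|\sigma|}$, and then observe that the measures $p_n^b := \mu(\{Z \succ \sigma : \Phi^Z(n) = b\})$ are uniformly left-c.e.\ in $n$ and $b$. A bounded convergence calculation applied to $\rho_N(\Phi^Z \bigtriangleup D)$ on $U \cap [\sigma]$ forces, for a density-$1$ set of $n$, a clean gap: $p_n^{D(n)}$ is close to $\mu([\sigma])$ while $p_n^{1-D(n)}$ is of order $\epsilon \cdot \mu([\sigma])$. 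Enumerating the approximations to $p_n^0$ and $p_n^1$ and outputting whichever bit is first seen to exceed $\mu([\sigma])/2$ yields a partial computable function that agrees with $D$ on a density-$1$ set, i.e., a generic computation of $D$. The main obstacle is to strengthen this from a generic description to a total computable coarse description of $D$, since generic computability does not in general imply coarse computability. My plan for this step is to iterate the construction with a sequence $\epsilon_k \to 0$ and correspondingly chosen strings $\sigma_k$, obtaining computable sets $C_k$ with $\overline{\rho}(C_k \bigtriangleup D) < \epsilon_k$, and then to amalgamate them into a single total computable set $C$ with $\rho(C \bigtriangleup D) = 0$ by an effective density-zero approximation argument in the spirit of the Miller-type lemma cited for Theorem~5.12 of \cite{HJMS}; this amalgamation is the genuinely delicate part of the proof.
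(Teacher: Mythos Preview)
The paper under review is a survey and does not give a proof of this theorem; it merely states the result and cites \cite{HJKS}, Corollary~5.3.  So there is no proof in this paper to compare your proposal against.

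That said, your overall strategy is the standard one and is essentially correct: reduce to nonuniform, pull back along the trivial coarse descriptions to get $D \leq_T Y$ and a functional $\Phi$ with $\Phi^X \sim_c D$, observe that the class $U$ of oracles $Z$ with $\Phi^Z$ total and $\Phi^Z \sim_c D$ is $\Pi^0_3(Y)$ and contains $X$, conclude $\mu(U) > 0$ from weak $3$-randomness, and then argue via Lebesgue density and majority voting that $D$ is coarsely computable.

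However, you manufacture a difficulty that is not really there.  Your voting algorithm can be made \emph{total} directly, with no amalgamation step needed.  Once you fix $\sigma$ with $\mu(U \cap [\sigma]) > (1-\epsilon)2^{-|\sigma|}$, note that $\Phi^Z$ is total for every $Z \in U$, so for \emph{every} $n$ one has
\[
p_n^0 + p_n^1 \;=\; \mu\bigl(\{Z \succ \sigma : \Phi^Z(n)\downarrow\}\bigr) \;\geq\; \mu(U \cap [\sigma]) \;>\; (1-\epsilon)2^{-|\sigma|}.
\]
Hence if you set the threshold at $\tau := (1-\epsilon)2^{-|\sigma|}/2$ rather than $2^{-|\sigma|}/2$, at least one of $p_n^0, p_n^1$ must eventually be seen to exceed $\tau$, and your function $C$ is total.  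Your dominated-convergence calculation then shows that for any fixed $\delta > 0$ the set $\{n : p_n^{1-D(n)} \geq \delta + \epsilon 2^{-|\sigma|}\}$ has density~$0$; choosing $\epsilon$ and $\delta$ small enough that $\delta + \epsilon 2^{-|\sigma|} < \tau$ gives $C(n) = D(n)$ on a set of density~$1$ outright.  So $C$ is already a computable coarse description of $D$, and the iteration with $\epsilon_k \to 0$ and the Miller-type amalgamation are unnecessary.
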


  Astor, Hirschfeldt and Jockusch \cite{AHJ} introduced ``dense computability'' as a weakening of
both generic and coarse computability.

\begin{defn}(\cite{AHJ}) A set $A$ is \emph{densely computable} (or \emph{weakly partially computable})  if there is a partial computable function $\varphi$
such that $\underline{\rho}(\{n: \varphi(n) = A(n) \}) = 1$.
\end{defn}

 In other words, the partial computable function may diverge on some arguments and give wrong answers on others but agrees with the characteristic function of $A$ on a set of density $1$.   It is obvious that every generically computable set and every coarsely computable set is densely computable.   Note that if $A$ is generically computable but not coarsely computable and $B$ is coarsely computable but not generically computable then $A \oplus B$ is neither generically computable nor coarsely computable,
where, as usual, $A \oplus B = \{ 2n : n \in A \} \cup \{ 2n+1: n \in B \}$.  But $A \oplus B$ is densely computable by using the generic algorithm on even numbers and the coarse algorithm on odd numbers.   Thus dense computability is strictly weaker than the disjunction of coarse computability and generic computability.

We can consider weak partial computability at densities less than $1$.

\begin{defn}(\cite{AHJ}) Let $r \in [0,1]$.   A set $A$ is \emph{weakly partially computable} at density $r$ if there exists a partial computable function 
such that $\underline{\rho}(\{n: \varphi(n) = A(n) \}) \ge r$.
Let 
$$\delta(A) = sup \{ r: A  \mbox{ is weakly partially computable at density } r \}.$$
\end{defn}

It is easy to show the following.

\begin{lem}(\cite{AHJ}) For all $A, \delta(A) = \gamma(A)$.
\end{lem}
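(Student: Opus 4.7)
The plan is to prove both $\delta(A) \geq \gamma(A)$ and $\gamma(A) \geq \delta(A)$.

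The first inequality is immediate: any computable set $C$ witnessing coarse computability at density $r$ yields a total computable characteristic function $\chi_C$ agreeing with $A$ on $A \triangledown C$, a set of lower density at least $r$; so $\delta(A) \geq r$, and taking suprema gives $\delta(A) \geq \gamma(A)$.

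The reverse inequality is the substantive content. I would fix a partial computable $\varphi$ with $\underline{\rho}(\{n : \varphi(n) = A(n)\}) \geq r$ and, for any $\epsilon > 0$, consider the c.e.\ set $D_1 = \{n : \varphi(n) \downarrow = 1\}$. Theorem \ref{approx} produces a computable $C \subseteq D_1$ with $\overline{\rho}(D_1 \setminus C) < \epsilon$. The key step is the inclusion
$$\{n : \varphi(n) = A(n)\} \subseteq (A \triangledown C) \cup (D_1 \setminus C),$$
which follows by a brief case analysis: when $\varphi(n) = A(n) = 0$ we have $n \notin D_1 \supseteq C$, so $C(n) = 0 = A(n)$; when $\varphi(n) = A(n) = 1$ either $n \in C$ (so $n \in A \triangledown C$) or $n \in D_1 \setminus C$. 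Taking lower densities and using the elementary inequality $\liminf_n(a_n + b_n) \leq \liminf_n a_n + \limsup_n b_n$ gives $r \leq \underline{\rho}(A \triangledown C) + \epsilon$, so $\gamma(A) \geq r - \epsilon$. Letting $\epsilon \to 0$ and then taking the supremum over $r < \delta(A)$ yields $\gamma(A) \geq \delta(A)$.

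The main technical input is Theorem \ref{approx}, which lets us replace the c.e.\ set $D_1$ of ``yes'' answers by a computable subset losing only $\epsilon$ in upper density. The conceptual point worth flagging is that one need only approximate $D_1$, not also the dual c.e.\ set of ``no'' answers: defaulting $C(n) = 0$ off of $C$ automatically preserves all of $\varphi$'s correct ``no'' answers, so only the ``yes'' answers require approximation. Without Theorem \ref{approx} the hard part would be producing the computable $C$ directly by a stage-by-stage construction, but with it in hand the lemma reduces to the inclusion above.
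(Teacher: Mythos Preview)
Your proof is correct and follows essentially the same approach as the paper: both directions are handled the same way, with Theorem~\ref{approx} doing the real work in the nontrivial direction $\gamma(A) \geq \delta(A)$. The only difference is cosmetic: the paper applies Theorem~\ref{approx} to the c.e.\ set $\text{dom}(\varphi)$ and then defines the total computable function by $h(n) = \varphi(n)$ on the resulting computable subset $C$ and $h(n) = 0$ elsewhere, whereas you apply Theorem~\ref{approx} to the c.e.\ set $D_1 = \varphi^{-1}(1)$ and use $\chi_C$ directly. Both choices lead to the same inclusion-and-density estimate, and your version is arguably a bit cleaner since it produces the witnessing computable \emph{set} immediately.
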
 

\begin{proof} If $A$ is weakly partially computable at density $r$ by a partial computable function 
$\varphi$,
then by Theorem
\ref{approx} $\text{dom}(\varphi)$ has a computable subset $C$ such that $\underline{\rho}(C) > \underline{\rho}(\text{dom}(\varphi)) - \epsilon$. Let $h$ be the total computable function defined by 
$ h(n)  = \varphi(n) $ if   $n \in C $ and $h(n) = 0$ otherwise.
Since $A \cap C \subseteq \{n:  A(n) = \varphi(n) \} $ it follows that $A$ is coarsely computable at density $r - \epsilon$.
So $\gamma(A) \ge \delta(A)$. Since  $\delta(A) \ge \gamma(A)$ by definition, the two are equal.
\end{proof}

  \begin{defn} A partial function $\Theta$ is a \emph{dense  description of} $A$  if 
 $\{n  : \Theta(n) = A(n)\} $  has density $1$.
\end{defn}

  Using dense descriptions one  can  define  dense reducibility and dense degrees as in \cite{AHJ}.

\end{document}